\newcommand{\pa}{\partial}
\colorlet{Changes@Color}{red}
\newtheorem{thm}{Theorem}[section]
\newtheorem{prop}[thm]{Proposition}
\newtheorem{Def}[thm]{Definition}
\newtheorem{lem}[thm]{Lemma}
\newtheorem{remark}[thm]{Remark}
\newtheorem{corl}[thm]{Corollary}
\newtheorem{conjecture}[thm]{Conjecture}
\newtheorem{example}[thm]{Example}
\newcommand{\Nat}{\mathbb{N}}
\newcommand{\eps}{\varepsilon}
\newcommand{\Hilbert}{\mathcal{H}}
\newcommand{\Dom}{\mathfrak{Dom}}
\def \<{\langle}
\def \>{\rangle}
\def \R{\mathbb R}
\def \H{{\cal H}}
\def \H^0{{\cal H}^0 or}
\def \w{\omega}
\def \n{\nabla}
\def \beq{\begin{equation}}
\def \eeq{\end{equation}}
\def \n{\nabla}
\def \eref{\eqref}
\begin{document}

% Title, authors and addresses

% use the thanksref command within \title, \author or \address for footnotes;
% use the corauthref command within \author for corresponding author footnotes;
% use the ead command for the email address,
% and the form \ead[url] for the home page:
% \title{Title\thanksref{label1}}
% \thanks[label1]{}
% \author{Name\corauthref{cor1}\thanksref{label2}}
% \ead{email address}
% \ead[url]{home page}
% \thanks[label2]{}
% \corauth[cor1]{}
% \address{Address\thanksref{label3}}
% \thanks[label3]{}

\title[Form Spectrum in Vanishing Curvature]{connected essential spectrum: \\the case of differential forms}

\author{Nelia Charalambous}
\address{Department of Mathematics and Statistics, University of Cyprus, Nicosia, 1678, Cyprus} \email[Nelia Charalambous]{nelia@ucy.ac.cy}

\author{Zhiqin Lu} \address{Department of
Mathematics, University of California,
Irvine, Irvine, CA 92697, USA} \email[Zhiqin Lu]{zlu@uci.edu}

\thanks{
The first author was partially supported by a University of Cyprus  Internal grant. The second author is partially supported by the DMS-19-08513.}
 \date{\today}

\subjclass[2010]{Primary: 58J50; Secondary: 58E30}

\keywords{essential spectrum, Weyl criterion, Gromov Hausdorff convergence}

\begin{abstract}
In this article we prove  that, over complete manifolds of dimension $n$ with vanishing curvature at infinity, the  essential spectrum of the Hodge Laplacian on differential $k$-forms is a connected interval  for $0\leq k\leq n$.  The main idea is to show that large balls of these manifolds, which capture their spectrum, are close in the Gromov-Hausdorff sense to product manifolds. We achieve this by carefully describing the collapsed limits of these balls. Then, via a new generalized version of the classical Weyl criterion, we demonstrate that very rough test forms that we get from the $\eps$-approximation maps can be used to show that the essential spectrum is a connected interval.  We also prove that, under a weaker condition  where  the Ricci curvature is asymptotically nonnegative,  the essential spectrum on $k$-forms is $[0,\infty)$, but only for $0\leq k\leq q$ and $n- q\leq k\leq n$ for some integer $ q\geq 1$ which depends the structure of the manifolds at infinity.
\end{abstract}

\maketitle

\section{Introduction}

In this article, we study the spectrum  of the Hodge Laplacian on differential forms over a complete orientable Riemannian manifold. The spectrum, $\sigma(H)$, of a nonnegative  and self-adjoint operator $H$    consists of all points $\lambda\in \mathbb{C}$ for which $H-\lambda I$ fails to be invertible.  $\sigma(H)$ is in fact a subset of the nonnegative real line. The essential spectrum, $\sigma_\mathrm{ess}(H)$, consists of the cluster points in the spectrum and of isolated eigenvalues of  infinite multiplicity.  The discrete isolated  spectrum, $\sigma_{\rm dis}(H)$,  defined by $\sigma(H)\backslash\sigma_\mathrm{ess}(H)$, consists of isolated eigenvalues of finite multiplicity.

It is well-known that the Hodge Laplacian $\Delta$ on $k$-forms over a Riemannian manifold is a densely defined,  nonnegative  and self-adjoint operator.
If the manifold is compact, then $\sigma_\mathrm{ess}(\Delta)=\emptyset$, that is, $\sigma(\Delta)=\sigma_{\rm dis}(\Delta)$ . On the other hand, when the manifold is  noncompact  both types of spectra, $\sigma_{\rm dis}(\Delta)$ and $\sigma_{\rm ess}(\Delta)$,  may exist. We call the essential spectrum  \emph{computable} when  it  is a connected set, that is, when it  is the empty set or  an interval $[\alpha,\infty)$ for some $\alpha\geq 0$.

The main goal of this article is to prove that the essential spectrum of the Hodge Laplacian over a complete non-compact Riemannian manifold with vanishing curvature  is computable.  The computation of the spectrum (or essential spectrum) of the Laplacian requires the construction of a large class of test differential forms (cf.~\cites{Ant,CharJFA,EF93}). This is a difficult task on a general manifold, since there hardly exist any canonically defined differential forms to work with.    In order to overcome this difficulty, we make systematical  use of  collapsing theory in Riemannian geometry to  construct a large class of such forms. These test forms are not approximate eigenforms, but they satisfy a new generalized version of the Weyl criterion.
 Our methodical use of collapsing theory for locating the essential spectrum is the most technically subtle part of our paper, and lies at the core of the computation of the essential spectrum on forms.

\begin{Def} \label{DefAF}
A manifold is called asymptotically flat, or with vanishing curvature, if its curvature tensor tends to zero at infinity.
\end{Def}
Note that our definition of asymptotic flatness is different from the one used in General Relativity. We emphasize that we make no assumptions on the  curvature  decay rate,  nor do we make any assumptions  on the volume growth  (or decay) of the manifold.   The absence of strong curvature decay assumptions, as in our case, is a barrier to obtaining smooth coordinates at infinity that could be used when computing the spectrum (cf. \cites{BKN,LoSh}).

Here  and for the rest of the paper, we assume that $M$ (or $M^n$)  is a complete noncompact Riemannian manifold of dimension $n$.  For $0\leq k\leq n$, we shall use either $\sigma(k,\Delta,M)$, or $\sigma(k,\Delta)$ ($\sigma_{\rm ess}(k,\Delta,M)$, or  $\sigma_{\rm ess}(k,\Delta)$ \emph{resp}.) to denote the spectrum (essential spectrum \emph{resp}.) of the Laplacian on $k$-forms over the  manifold $M$.  The first  main result of this paper is the following.
\begin{thm}\label{thm1}
Let $(M^n,g)$ be an asymptotically flat complete noncompact Riemannian manifold.  Then for $0\leq k\leq n$,  $\sigma_\mathrm{ess}(k,\Delta, M)$  is either empty or and interval $[\alpha_k,\infty)$ for a nonnegative number $\alpha_k$, in other words it is computable.
\end{thm}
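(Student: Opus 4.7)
Set $\alpha_k:=\inf\sigma_{\rm ess}(k,\Delta,M)$; if $\alpha_k=+\infty$ the essential spectrum is empty and we are done, so assume it is finite. Since $\sigma_{\rm ess}$ is automatically closed in $[0,\infty)$, it suffices to prove $[\alpha_k,\infty)\subset\sigma_{\rm ess}(k,\Delta,M)$. By the generalized Weyl criterion promised in the introduction, this amounts to producing, for each $\lambda\geq\alpha_k$, a family $\{\omega_j\}$ of $k$-forms in the form domain of $\Delta$ that are $L^2$-normalized, have supports escaping every compact set (hence $\omega_j\rightharpoonup 0$), and that satisfy $(\Delta-\lambda)\omega_j\to 0$ in the appropriate weak/quadratic-form sense. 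Since $\alpha_k<\infty$, there already is a Weyl sequence at $\alpha_k$ whose supports diverge; this supplies a sequence of centers $p_j\to\infty$ around which the entire construction can be localized, and by Definition~\ref{DefAF} the sectional curvature on $B(p_j,R_j)$ tends uniformly to zero as $j\to\infty$ (for any choice $R_j\to\infty$).

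The key geometric step is to replace the metric on $B(p_j,R_j)$ with a collapsed model. Passing to a subsequence, $(B(p_j,R_j),p_j,g)$ converges in the pointed Gromov--Hausdorff sense to a limit $(Y,y_\infty)$; together with Cheeger--Fukaya--Gromov collapsing theory, the vanishing curvature hypothesis produces $\eps_j$-approximation maps $\pi_j:B(p_j,R_j)\to Y$ that are, off a controlled singular set, almost Riemannian submersions whose fibers $F_j$ are infranilmanifolds of diameter $\eps_j\to 0$ over a locally flat base modeled on an open $U\subset\mathbb{R}^a$. Under this almost-product structure the Hodge Laplacian splits, up to small error, into a horizontal Laplacian pulled back from $U$ plus a fiberwise Laplacian on $F_j$, with mismatch controlled by the O'Neill $A$- and $T$-tensors and the residual curvature, all of which tend to zero with $j$. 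On the model $U\times F_j$ the spectral problem is explicit: take a cutoff plane wave $\chi_j(x)\cos\langle\xi,x\rangle\,dx^{I_1}$ on the base with $|\xi|^2=\lambda-\nu_{k_2}$, wedge it with a fiberwise eigen-$k_2$-form of $F_j$ at eigenvalue $\nu_{k_2}$ (choosing $k_1+k_2=k$), and pull back via $\pi_j^*$ before renormalizing. Provided some $k_2$ admits $\nu_{k_2}=0$---equivalently $H^{k_2}(F_j)\neq 0$, a condition detectable via Nomizu's theorem on the nilpotent Lie algebra of $F_j$---this yields a valid candidate $\omega_j$ centered at spectral value $\lambda$.

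The main obstacle is the error analysis. Because $\pi_j$ is only an $\eps$-approximation, $\pi_j^*\eta$ is a very rough $k$-form, and the cutoff $\chi_j$ adds boundary terms; showing $(\Delta-\lambda)\omega_j\to 0$ in the generalized sense requires commuting $d$ and $d^*$ past $\pi_j^*$ and $\chi_j$, and bounding the resulting commutators in terms of $\|A_{\pi_j}\|,\,\|T_{\pi_j}\|$, the fiber diameter, and the sectional curvature on $B(p_j,R_j)$. Each factor is small, but their combined effect on the Weitzenb\"ock formula and the divergence terms must be tracked carefully, with the fiberwise harmonic choice ensuring that the potentially divergent fiber derivatives actually annihilate the ansatz. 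Verifying that the generalized Weyl criterion of this paper truly accepts such rough test forms is where the bulk of the technical work sits; once it is in place, the collapse argument gives $[\beta_p,\infty)\subset\sigma_{\rm ess}$ for each collapsed limit attained by a subsequence, and the union of these upper half-lines (or its closure, if the infimum is not achieved) is itself a half-line, yielding the desired $[\alpha_k,\infty)$.
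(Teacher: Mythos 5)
Your high-level strategy matches the paper's: blow down a minimal sequence, invoke Cheeger--Fukaya--Gromov collapsing to get an almost-product structure, build test forms from base plane waves wedged with fiber eigenforms, and feed the resulting rough forms into the generalized Weyl criterion. However, there is a genuine gap exactly where the paper's main technical work lies, namely the singular limit.

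You write that the approximation maps are, ``off a controlled singular set, almost Riemannian submersions,'' and from there you proceed as if the local product model $U\times F_j$ covers the region that carries the spectral weight. That is precisely what has to be proved, and it is not automatic: the blow-down limit $X'$ of the balls $(B_{x_i}(2R_i),x_i,\eps_i g)$ can fail to be a manifold, and the approximate eigenform $\omega$ may a priori concentrate over $X'_{\rm sing}$. The paper devotes all of Section~\ref{S5} to this: one passes to the $\eps_o$-frame bundle $F_{\eps_o}(W)$, whose Gromov--Hausdorff limit $Y$ \emph{is} a smooth manifold with injectivity radius bounded below, lifts $\omega$ to $\omega^*$, and shows via the pointwise estimate~\eqref{52}, the elliptic estimate~\eqref{53}, and the $O(n)$-equivariance of the diagram~\eqref{256} that $\omega^*$ decomposes as in~\eqref{27-2} and is still an approximate eigenform~\eqref{27-1}. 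This leads to the key inequality~\eqref{2c2} and to Theorem~\ref{thm522}: restricting to a smaller ball $W'$ whose image lies entirely in $X'_{\rm reg}$ loses only $\Psi(\eps_o+\delta\mid\lambda)$ in the bottom Dirichlet eigenvalue. Only after this replacement does the smooth-limit theorem (Theorem~\ref{cor46}) apply. Your proposal has no substitute for this step, and without it there is no guarantee that the half-line $[\beta_p,\infty)$ you build on the regular part actually reaches down to $\alpha_k$.

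Two smaller points. First, you take the Gromov--Hausdorff limit of $(B(p_j,R_j),p_j,g)$ without rescaling; when the injectivity radius collapses you must blow down by $\eps_j$ as in Theorem~\ref{cor46} (so that $\sqrt{\eps_j}R_j\to 1$ and curvature is bounded by $1$) to obtain a precompact family with a controlled, noncompact, smooth limit $X$. Second, your closing sentence --- that the union of the half-lines $[\beta_p,\infty)$ over subsequential limits is $[\alpha_k,\infty)$ --- presumes $\inf_p\beta_p=\alpha_k$. This is exactly what the minimal-sequence machinery (Lemma~\ref{lem7b}, Proposition~\ref{prop26}) and Theorems~\ref{cor46},~\ref{thm522} establish; as written you assert it rather than prove it. You also do not separate the non-collapsing case ($\liminf I(x_i)>0$), where the limit is a complete flat manifold and one invokes Theorem~\ref{thmF2} rather than any fiber-harmonic ansatz; that case does not fit the infranil-fiber picture you describe.
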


The behavior of the spectrum becomes more complicated when we  only assume that the manifold has \emph{asymptotically nonnegative Ricci curvature},  that is, when we assume  that
\[
\liminf_{d(x,p)\to\infty}{\rm Ric}(x)\geq 0
\]
for some fixed point $p\in M$.  In Section \ref{10} we will study the $k$-form spectrum over noncompact manifolds under a weaker assumption: that they have asymptotically nonnegative Ricci curvature on a sequence of expanding balls.
\begin{Def}\label{76} Let $M$ be a complete noncompact Riemannian manifold. We say that $M$ has asymptotically nonnegative  Ricci curvature along a sequence of expanding balls,  if it contains a sequence of disjoint balls $M_i=B_{x_i}(R_i)$  with $ x_i\to \infty$ and  $R_i \to \infty$  such that
\[
\mathrm{Ric}_{M_i} \geq - \delta_i
\]
where $\delta_i\to 0$ with $\delta_i\geq 0$.
\end{Def}

We will show
\begin{thm} \label{thmRic}
Let $(M^n,g)$  be a  complete noncompact  Riemannian manifold with asymptotically nonnegative Ricci curvature   along a sequence of expanding balls.
Moreover, assume that  the dimension of the manifold at infinity is $q$ (see Definition~\ref{77}). Then for each $k\leq q$ and $k\geq n-q$
\[
\sigma(k,\Delta, M)=\sigma_{\rm ess}(k,\Delta, M) = [0,\infty).
\]
In particular, if $q\geq n/2$, then for all $k$ the spectrum is $[0,\infty)$.
\end{thm}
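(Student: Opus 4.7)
The plan is to prove $[0,\infty)\subset\sigma_{\rm ess}(k,\Delta,M)$ directly for $0\leq k\leq q$, and then invoke Hodge-star duality to conclude the statement for $n-q\leq k\leq n$. Since the Hodge Laplacian is nonnegative self-adjoint, the opposite inclusion $\sigma(k,\Delta,M)\subset[0,\infty)$ is automatic, so it suffices, for every $\lambda\geq 0$, to exhibit a sequence of $L^2$ $k$-forms $\omega_i$ supported in the disjoint balls $M_i$ with Rayleigh residual $\|(\Delta-\lambda)\omega_i\|_{L^2}/\|\omega_i\|_{L^2}\to 0$. Disjointness of supports guarantees weak convergence to zero, which together with the residual estimate feeds directly into the generalized Weyl criterion developed earlier in the paper and places $\lambda$ in $\sigma_{\rm ess}(k,\Delta,M)$.

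First I would apply Gromov's precompactness theorem to the pointed sequence $(M_i,x_i)$. Because $\mathrm{Ric}_{M_i}\geq-\delta_i$ with $\delta_i\to 0$ and $R_i\to\infty$, after passing to a subsequence the balls converge in the pointed Gromov-Hausdorff topology to a complete limit space $(X,x_\infty)$ that has generalized nonnegative Ricci curvature in the Cheeger-Colding sense. I would read the hypothesis that the dimension of $M$ at infinity is $q$ (Definition \ref{77}) as asserting that $X$, or a suitable tangent cone at $x_\infty$, splits isometrically as $\mathbb{R}^q\times Y$; this is exactly the kind of asymptotic structure the splitting theorem of Cheeger-Colding yields in the almost-nonnegative Ricci regime. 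The flat factor is precisely what enables oscillating test forms: on $\mathbb{R}^q$, for every $\lambda\geq 0$ and every $0\leq k\leq q$, the form $\cos(\sqrt{\lambda}\,x_1)\,dx_1\wedge\cdots\wedge dx_k$ is an honest $\lambda$-eigenform of the Hodge Laplacian, and multiplying by a cutoff supported on a large box produces compactly supported model forms $\eta_\lambda$ with Rayleigh quotient converging to $\lambda$.

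The core step is to transport the model forms back to $M$. Using the $\eps_i$-Gromov-Hausdorff approximation maps between $M_i$ and the flat factor $\mathbb{R}^q\subset X$ (or the approximate harmonic splitting functions from Cheeger-Colding), I would pull back $\eta_\lambda$ to obtain rough $k$-forms $\omega_i$ on $M_i$. For $i$ large enough the cutoff scale fits well inside $B_{x_i}(R_i)$, so the supports are pairwise disjoint and the norms are comparable to those of $\eta_\lambda$ up to a factor tending to $1$. Applying the generalized Weyl criterion of the paper—which is precisely designed to accept test forms that are only as regular as $\eps$-approximation maps allow—one concludes $\lambda\in\sigma_{\rm ess}(k,\Delta,M)$. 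Once $[0,\infty)\subset\sigma_{\rm ess}(k,\Delta,M)$ is established for $0\leq k\leq q$, the range $n-q\leq k\leq n$ follows from the identity $\sigma_{\rm ess}(k,\Delta,M)=\sigma_{\rm ess}(n-k,\Delta,M)$ provided by the Hodge star isometry on $L^2$, which commutes with $\Delta$.

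The main obstacle I anticipate is quantitative: the $\eps$-approximation maps are merely H\"older continuous in general, so pulling back a smooth model form requires care to control both the $L^2$ norm of $\omega_i$ and of $\Delta\omega_i-\lambda\omega_i$ uniformly as $i\to\infty$, without access to smooth coordinates at infinity or to uniform injectivity-radius bounds. This is exactly the difficulty that the paper addresses by replacing the classical Weyl criterion with a version adapted to rough test forms; the heart of the proof therefore consists in verifying that the Cheeger-Colding almost-splitting functions on $M_i$ satisfy the hypotheses of this criterion with errors controlled by $\delta_i$ and $\eps_i$, both of which tend to zero by construction.
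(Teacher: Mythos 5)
Your overall framework matches the paper's: rescale the expanding balls so that the Gromov--Hausdorff limit is $\mathbb{R}^q$ (Lemma~\ref{lemGH}), transport the flat-space model forms $e^{i\sqrt{\lambda}\,x_1}\,dx_1\wedge\cdots\wedge dx_k$ via the Cheeger--Colding almost-splitting functions $\tilde b_{i,1},\dots,\tilde b_{i,q}$, cut off, and then feed the resulting rough $k$-forms $\omega_i = \phi_i\,d\tilde b_{i,1}\wedge\cdots\wedge d\tilde b_{i,k}$ into the generalized Weyl criterion. The Poincar\'e/Hodge-star duality to pass from $k\leq q$ to $k\geq n-q$ is also the same. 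One minor correction: the transport should go via the smooth harmonic map $\Phi_i=(\tilde b_{i,1},\dots,\tilde b_{i,q})$, not the $\eps$-GH approximation map itself (the latter is generally not even continuous, so you cannot pull back forms through it). You do mention the almost-splitting functions as an alternative, but this should be the only option on the table.

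The genuine gap is that you do not identify \emph{why} the classical Weyl criterion fails or how the generalized one rescues you. When you apply the Weitzenb\"ock formula to $\omega_i$ (as in \eqref{ric_1}), the last term $\phi_i\,\Delta(d\tilde b_{i,1}\wedge\cdots\wedge d\tilde b_{i,k})$ involves second covariant derivatives of $d\tilde b_{i,j}$, i.e., third derivatives of $\tilde b_{i,j}$, and Cheeger--Colding only gives $L^2$ control of the Hessian (second derivatives). So $\|\Delta\omega_i-\lambda\omega_i\|_{L^2}$ is simply not estimable by these tools, which is why the direct Weyl criterion cannot be used for $1\leq k\leq n/2$; the paper flags this explicitly in the remark following the proof. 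The way out is not simply ``verify the hypotheses of the generalized criterion with $\delta_i,\eps_i$ errors'' as you write: the crucial algebraic move is that in the pairing $\bigl((\Delta+\alpha)^{-\gamma}\omega_i,\;\phi_i\,\Delta(d\tilde b_{i,1}\wedge\cdots\wedge d\tilde b_{i,k})\bigr)$ one can integrate by parts (set $\eta_i=(\Delta+\alpha)^{-\gamma}\omega_i$, shift $\Delta=d\delta+\delta d$ onto $\phi_i\eta_i$) so that the quantity to control becomes $\bigl(\delta(\phi_i\eta_i),\;\delta(d\tilde b_{i,1}\wedge\cdots\wedge d\tilde b_{i,k})\bigr)$, which involves only \emph{one} covariant derivative of $d\tilde b_{i,j}$, hence only $\mathrm{Hess}\,\tilde b_{i,j}$, which is controlled in $L^2$ by \eqref{ric_2}. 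Together with the uniform $L^2$ boundedness of $(\Delta+\alpha)^{-\gamma}$ and $\delta(\Delta+\alpha)^{-\gamma}$, this closes the estimate. Without spelling out this integration by parts, the ``heart of the proof'' you gesture at is not actually supplied. A secondary omission: to show $\|\omega_i\|_{L^2}$ does not concentrate in the annular region where $\nabla\phi_i\neq 0$, you need the volume comparison estimate \eqref{vol_3}, which uses the Ricci lower bound; this should be stated rather than assumed.
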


Observe that Definition \ref{76} is satisfied when the manifold has nonnegative Ricci curvature. In Section \ref{10} we also prove the following result.

\begin{corl}\label{cor14}
Let $(M^n,g)$  be a noncompact  complete  Riemannian manifold with nonnegative Ricci curvature.
Assume that there is a constant number $c>0$ and an integer $s\geq 1$ such that at a fixed point $p$
\begin{equation}\label{c5}
{\rm Vol}\,(B_{p}(R))\geq cR^s.
\end{equation}
Then the dimension of $M$ at infinity is at least $s$. Consequently, for each $k\leq s$  and $k\geq n-s$
\[
\sigma(k,\Delta, M)=\sigma_{\rm ess}(k,\Delta, M) = [0,\infty).
\]
\end{corl}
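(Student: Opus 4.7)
\emph{Plan of proof.} Nonnegative Ricci curvature trivially satisfies the asymptotic lower Ricci condition of Definition~\ref{76}: pick any divergent sequence of centers $x_i\to\infty$ and choose radii $R_i\to\infty$ slowly enough that the balls $B_{x_i}(R_i)$ are pairwise disjoint, with $\delta_i\equiv 0$. Therefore Corollary~\ref{cor14} is an immediate consequence of Theorem~\ref{thmRic} once we show that the dimension at infinity of $M$, in the sense of Definition~\ref{77}, is at least $s$. The remainder of the argument is devoted to this geometric assertion.

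Fix $R_i\to\infty$ and consider the rescaled pointed spaces $(M, R_i^{-2}g, p)$. Since the lower Ricci bound $\mathrm{Ric}\geq 0$ is scale-invariant, Gromov's precompactness theorem yields, on a subsequence, a pointed Gromov-Hausdorff limit $(Y, d_\infty, p_\infty)$, a tangent cone of $M$ at infinity. By the Cheeger-Colding renormalized volume convergence theorem, the suitably normalized Riemannian volumes on $(M,R_i^{-2}g)$ converge weakly to a Radon measure $\mu_\infty$ on $Y$. The hypothesis \eref{c5}, combined with Bishop-Gromov monotonicity of $R\mapsto \mathrm{Vol}(B_p(R))/R^n$, transfers to the rescaled sequence as an $s$-dimensional lower growth rate for the measure of metric balls about $p_\infty$. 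By the dimension theory of Ricci limit spaces (Cheeger-Colding, Colding-Naber), a polynomial lower growth rate of order $s$ on $\mu_\infty$ forces the geometric dimension of $Y$ to be at least $s$, which is the desired lower bound on the dimension of $M$ at infinity.

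The principal obstacle is that when $s<n$ the rescaled sequence is generically \emph{collapsing}, so one cannot use classical non-renormalized Hausdorff or Riemannian volume arguments: the naive bound $\mathrm{Vol}_{R_i^{-2}g}(B_p(1)) = R_i^{-n}\mathrm{Vol}_g(B_p(R_i))$ tends to zero. The Cheeger-Colding renormalized measure framework is essential here, and the technical heart of the argument lies in verifying that the polynomial lower bound on $\mathrm{Vol}(B_p(R))$ survives this renormalization and is strong enough to detect dimension in the precise sense of Definition~\ref{77}. Once this identification is carried out, the spectral conclusion follows by invoking Theorem~\ref{thmRic} directly.
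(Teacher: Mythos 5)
Your proposal correctly identifies the structure of the argument: observe that $\mathrm{Ric}\geq 0$ furnishes a sequence of expanding disjoint balls as in Definition~\ref{76} with $\delta_i\equiv 0$, then invoke Theorem~\ref{thmRic}, so that everything reduces to the single geometric assertion that $\dim_\infty M\geq s$. This is also exactly the structure of the paper's proof, which however dispatches that geometric assertion as ``well known'' and supplies no further argument. So your proposal attempts to supply what the paper deliberately omits, and the tools you reach for (rescaling, Gromov precompactness, Cheeger--Colding renormalized volume convergence) are the right ones.

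That said, there are two places where the sketch leaves a genuine gap. First, you rescale about the \emph{fixed} basepoint $p$, whereas Definition~\ref{77} takes pointed limits about basepoints $x_i\to\infty$. The paper itself asserts, in the remark following Definition~\ref{77}, that for $\mathrm{Ric}\geq 0$ the dimension at infinity coincides with the dimension of the tangent cone of $M$ at infinity; but this identification is precisely what licenses replacing moving basepoints by the fixed one, and it is not automatic (it uses that the volume bound \eqref{c5} propagates to every basepoint via Bishop--Gromov, and that the regular set of the asymptotic cone is dense and of constant dimension \`a la Colding--Naber). You should at least flag that you are relying on this equivalence. Second, and more substantively, the phrase ``transfers to the rescaled sequence as an $s$-dimensional lower growth rate for the measure of metric balls about $p_\infty$'' conceals the real difficulty: the renormalized measure of $B_{p_\infty}(r)$ is the limit of $\mathrm{Vol}\,(B_p(rR_i))/\mathrm{Vol}\,(B_p(R_i))$, and the lower bound \eqref{c5} gives a lower bound on the numerator but says nothing useful when the denominator $\mathrm{Vol}\,(B_p(R_i))$ grows faster than $R_i^s$. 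Bishop--Gromov monotonicity of $\mathrm{Vol}\,(B_p(R))/R^n$ gives the opposite one-sided control from what you need for $r>1$, and no upper bound of the form $\mathrm{Vol}\,(B_p(R))\leq CR^s$ is assumed. To close this one must argue more carefully, e.g.\ by showing that if every tangent cone at infinity had geometric dimension $q<s$ then the doubling/volume-ratio structure of the renormalized limit measure would force $\mathrm{Vol}\,(B_p(R))=o(R^s)$, contradicting \eqref{c5}. As it stands the middle paragraph asserts the conclusion of the hard step rather than proving it.

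In short: the reduction to Theorem~\ref{thmRic} is correct and matches the paper; the Cheeger--Colding strategy is the right one; but the key inequality that the renormalized limit measure has $s$-dimensional growth is asserted, not derived, and the basepoint discrepancy with Definition~\ref{77} needs to be acknowledged and resolved.
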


Under the assumption of Theorem \ref{thmRic}, the function spectrum  is $[0,\infty)$. However, this is not the case for the $k$-form essential spectrum.  The bottom of the essential spectrum for $k$-forms may capture further details of  the geometric and topological structures of the manifold  that are ignored by the function spectrum \cites{Don81,Don}. This fact will be reflected in  the proof of the main results of this paper, and in the examples of Section \ref{11}.

Based on the above results, we would make the following conjecture.
\begin{conjecture}\label{16}
Let $(M^n,g)$  be a  complete noncompact  Riemannian manifold with asymptotically nonnegative Ricci curvature. Then its  $k$-form essential spectrum   is computable, that is, for each $0\leq k\leq n$  either $\sigma_{\rm ess}(k,\Delta, M) =\emptyset$ or $\sigma_{\rm ess}(k,\Delta, M) = [a_{k},\infty)$ for some  $a_{k}\geq 0$.  \end{conjecture}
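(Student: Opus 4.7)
The plan is to extend the generalized-Weyl/Gromov-Hausdorff strategy of Theorem~\ref{thm1} from flat limits to singular Ricci limit spaces. Set $\alpha_k=\inf\sigma_{\rm ess}(k,\Delta,M)$; the goal, assuming $\alpha_k<\infty$, is to show that every $\mu>\alpha_k$ lies in $\sigma_{\rm ess}(k,\Delta,M)$. Asymptotic nonnegativity of $\mathrm{Ric}$ implies the expanding-balls hypothesis of Theorem~\ref{thmRic}, so in the extreme degrees $k\leq q$ and $k\geq n-q$, where $q$ is the dimension at infinity, the essential spectrum is already $[0,\infty)$ and hence trivially computable. The genuine content of the conjecture therefore lies in the middle range $q<k<n-q$, where $\alpha_k$ may be strictly positive.

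In that range, I would begin with a witness for $\alpha_k$: by a Persson-type reduction there exist compactly supported approximate eigenforms $\omega_j^{(0)}$ with Rayleigh quotients tending to $\alpha_k$ and supports escaping to infinity, concentrated in small balls $B_{x_j}(r_j)$. Embed each such ball in a much larger ball $B_{x_j}(R_j)$ with $R_j\to\infty$, and extract a pointed Gromov-Hausdorff limit $(Y,y_\infty)$. By Cheeger-Colding, $Y$ is a Ricci limit space of nonnegative synthetic Ricci curvature with a regular-singular decomposition $Y=\mathcal{R}\sqcup\mathcal{S}$; the almost-splitting theorem produces, on sufficiently large sub-balls of $\mathcal{R}$, near-product Euclidean factors. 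To realize the frequency $\mu$, I would multiply the limit model of the $\omega_j^{(0)}$ by a plane-wave-like function along one such almost-$\R$-factor with frequency $\xi$ satisfying $\alpha_k+\xi^2=\mu$, so that on a genuine product the Hodge-Laplacian of the product is $\mu$ times the product. Transporting this model back to $M$ via an $\varepsilon$-approximation map $f_j\colon B_{x_j}(R_j)\to Y$ and cutting off away from $f_j^{-1}(\mathcal{S})$ produces a rough test $k$-form on $M$, and the paper's generalized Weyl criterion should then certify $\mu\in\sigma_{\rm ess}(k,\Delta,M)$.

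The main obstacle is the analytic handling of the singular set $\mathcal{S}$ of the Ricci limit. In the vanishing-curvature setting the limits are smooth flat products and the $\varepsilon$-approximations deliver honest smooth test forms whose Hodge energies are accessible by classical calculus. Under only asymptotic nonnegativity of $\mathrm{Ric}$, codimension-two conical singularities of $Y$ are generically unavoidable, and cutting a $k$-form off near them can inflate the exterior derivative in a way the classical function-theoretic capacity argument does not control. A successful proof will likely require either a Hodge-Sobolev theory for forms on RCD$(0,N)$ spaces fine enough to support a Weyl criterion, or a form-valued refinement of Cheeger-Colding almost-splitting that quantitatively controls exterior derivatives across small-measure neighborhoods of $\mathcal{S}$. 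This same refinement should also explain, in a natural way, why $\alpha_k$ can be strictly positive in middle degrees, as the minimum $k'$-form eigenvalue of collapsed cross-sections of tangent cones at infinity.
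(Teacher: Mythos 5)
This statement is Conjecture~\ref{16}, and the paper does \emph{not} prove it; it is left open. The authors state explicitly that ``we believe that it is not possible to use our method to prove the conjecture,'' precisely because so little is known about collapsing under Ricci bounds alone, and in \S~\ref{11} they instead propose a \emph{different} route: the dichotomy of Proposition~\ref{84} (a generalized eigenform either fails to concentrate on the boundary annulus, in which case Corollary~\ref{cor21} puts $\lambda$ in $\sigma_{\rm ess}$ with no curvature hypotheses at all, or it does concentrate, in which case an \emph{effective maximum principle} should rule it out), and they reduce the whole matter to the separate Conjecture~\ref{87}. So your proposal is not a proof of the same statement the paper proves; it is a sketch towards an open problem, and the paper's own tentative strategy is structurally different from yours.

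On the merits of your sketch: you correctly observe that the extreme degrees $k\leq q$ and $k\geq n-q$ are already settled by Theorem~\ref{thmRic} (which applies, via Proposition~\ref{prop26}, under the full liminf hypothesis), and you correctly and honestly flag the singular set $\mathcal S$ as an obstruction. But there is a further, more fundamental gap in the step ``transport this model back to $M$ via an $\varepsilon$-approximation map $f_j$ and cut off away from $f_j^{-1}(\mathcal S)$.'' In the vanishing-curvature case the paper's whole machinery (Lemma~\ref{lem51}, Theorems~\ref{cor46} and~\ref{thm522}) relies on the Cheeger--Fukaya--Gromov fibration theorem, which gives an honest fiber bundle with nilmanifold fibers and smooth local trivializations, so that test $k$-forms on $N_i\times\R^m$ pull back to genuine smooth forms on $M_i$ satisfying the hypothesis~\eqref{thm7e1} of Theorem~\ref{thmSpecGH}. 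That fibered structure requires two-sided sectional curvature bounds; under asymptotically nonnegative Ricci curvature it simply does not exist, and the $\varepsilon$-approximation $f_j$ coming from Cheeger--Colding is only a measurable Gromov--Hausdorff approximation, not a submersion or even a map of bounded differential. There is therefore no mechanism for pulling differential forms back from $Y$ (or from a product model over a regular patch of $Y$) to $M$ in the collapsed middle degrees; the plane-wave times limit-of-$\omega_j^{(0)}$ object you describe is not a $k$-form on $M$ in any sense that feeds into the Weyl criterion. This is exactly why the paper pivots away from the collapsing-theory route and toward Proposition~\ref{84} and the effective maximum principle of Conjecture~\ref{87}, which avoid constructing test forms on the limit space entirely. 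Your closing remark that one would need a ``form-valued refinement of Cheeger--Colding almost-splitting'' is therefore pointing in the right direction, but the need is prior to, and larger than, the regularity of $\mathcal S$: one needs a replacement for the CFG fibration before cutting off near $\mathcal S$ even becomes the issue.
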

Since so far very little is known about the structure of manifolds with nonnegative Ricci curvature in the collapsing case, we believe that it is not possible to use
our the method to prove the conjecture. On the other hand, the conjecture is closely related to many papers on the spectral gap (see for example \cites{CLL,Lott2,Post1,Post2,SchoTr} for the function case, and  ~\cite{LL-2} for the differential form case). So far the presence of gaps in the essential spectrum has only been demonstrated in cases of manifolds  that have curvature bounded above by a negative constant on an unbounded sequence of points.

Manifolds with computable essential spectrum are in a sense ``opposite'' to those whose essential spectrum has gaps, that is, whose essential spectrum is a disconnected subset of $\R^+$.
In \S~\ref{11}, in addition to providing some examples, we make the important observation that  could lead to the proof of Conjecture \ref{16}.  We will further address the observation in \S~\ref{11} in forthcoming work.

Our results are  related to the spectral continuity  results derived from the Cheeger-Fukaya-Gromov  and Cheeger-Colding theories.  All of the above references however have only considered the compact case (see for example Fukaya~\cite{Fuk2} and Cheeger-Colding~\cite{CCoIII} for the function spectrum and Lott~\cites{lott, lott1}, and Honda~\cite{Ho}, on the form spectrum).   Our  results are also related to the work of Dodziuk who considered the behavior of the spectrum under continuous deformations of the metric in the compact case \cite{dod}, but also to our article \cite{ChLu6} which treats the noncompact case.

Noncompact manifolds are more complicated both topologically and geometrically when compared to  compact ones. However, with respect to the continuity of the \emph{essential spectrum} we have an additional technique which  is not available in the compact case: we can zoom in over a large portion of an asymptotically flat (or asymptotically Ricci nonnegative) manifold and still be able to control its curvature. This unique feature allows us to study the continuity of the essential spectrum  in both the collapsing and non-collapsing cases.

Throughout the paper, we use the following notation.  We denote by $(M,p,g)$ the pointed manifold $M$ centered at $p$ and endowed with the metric $g$. The open geodesic ball of radius $R$ centered at $p$ will be denoted by $B_p(R)$. In order to keep track of the radius and metric of a geodesic ball after rescaling, we will denote by $(M, p, g, R)$  the manifold $(B_p(R),p,g)$.  When it is otherwise clear we could denote $(M,p,g)$  by $(M,p)$, or $M$.

We will also use the notation of Cheeger and Colding \cite{CCoII} to denote by
\begin{equation}\label{psi}
\Psi(u_1,\cdots, u_r \mid  C_1,\cdots, C_s)
\end{equation}
 any positive  function that depends on $u_1,\cdots, u_r$ and additional parameters $C_1,\cdots, C_s$, such that when the additional parameters remain fixed then
\[
\lim_{u_1,\cdots, u_r\to 0} \Psi(u_1,\ldots, u_r\mid  C_1,\cdots, C_s) =0.
\]
This notation proves to be convenient. For example, under the above notation a sequence of real numbers $a_i\to a$ as $i\to\infty$ can be written as $|a_i-a|=\Psi(i^{-1})$.

The organization of this paper is as follows. In Section~\ref{sec2},  we show that on manifolds with asymptotically nonnegative Ricci curvature the bottom of the essential spectrum is captured by a sequence of expanding geodesic balls with  given radii. In Section \ref{7} we use the generalized Weyl criterion to show the continuity of the spectrum under a very general notion for the $\mathcal{C}^0$ topology of  the metrics.
 The technical heart of this paper is in Sections~\ref{S4} and ~\ref{S5}.  In  Section~\ref{S4}, we prove Theorem~\ref{thm1} under the additional assumption that there is a minimal sequence collapsing to a smooth manifold, whereas in Section~\ref{S5}, we prove that such type of ``good'' minimal sequence always exists. The  complete  proof of Theorem \ref{thm1}  can be  found in Section \ref{S6}, and we prove Theorem \ref{thmRic} and its Corollary in Section \ref{10}.
The generalized Weyl criterion (Theorem \ref{Thm.Weyl.bis-4}) we prove is  more  powerful than its preliminary version in~\cite{CLL} and is the one we will apply throughout this paper. We provide its proof in the Appendix. \\

{\bf Acknowledgement.} The authors would like to thank  Kenji Fukaya, John Lott,  Rafe Mazzeo, and Xiaochun Rong  for their useful comments and suggestions.

\section{Localization of the bottom of the essential spectrum}\label{sec2}

Let $(M^n,g)$ be a complete $n$-dimensional Riemannian manifold. The metric $g$ induces a pointwise inner-product $\< \cdot ,\cdot \>$ on the space of $k$-forms $\Lambda^k(M)$. We denote the $L^2$  inner product  as $  (\cdot , \cdot )=\int_M \< \cdot  ,\cdot \>$   and the  corresponding $L^2$ norm as $\|\cdot\|_{L^2}$. Let $L^2(\Lambda^k(M))$  be  the space of $L^2$ integrable $k$-forms and  $\Delta_k$ be  the Laplacian on $k$-forms  as well as its Friedrichs extension on $L^2(\Lambda^k(M))$.  Then $\Delta_k$
is a  densely defined,  self-adjoint and nonnegative operator on the Hilbert space $L^2(\Lambda^k(M))$. We shall write $\Delta$ instead of $\Delta_k$ whenever it is clear.

\begin{lem} \label{lem7b}
Let $M$ be a complete Riemannian manifold.  We
assume that the sequence of balls   $B_{x_i}(R_i)$  satisfies  the following properties

\begin{enumerate}
\item
$x_i\to \infty$ and  $R_i   \to \infty  \ \ \text{as}  \ \ i\to \infty$,
\item For any compact subset $K$ of $M$, there is an $i_o(K)$ such that if $i>i_o(K)$, then $B_{x_i}(R_i)$ is outside that compact set.

\item
$\mathrm{Ric} \geq   - \delta_i  \  \text{on} \,\,B_{x_i}(2R_i)$ for a sequence of $\delta_i\to 0$.
\end{enumerate}
Then for any sequence $R_i'<R_i$ with $R_i'\to\infty$, there exists a sequence $y_i\in B_{x_i}(R_i)$ such that
\begin{equation}\label{890}
\liminf_{i\to \infty} \lambda_o (k, B_{y_i}(R_i')) \leq  \liminf_{i\to \infty} \lambda_o (k, B_{x_i}(R_i)),
\end{equation}
where $\lambda_o (k, U)$ denotes the smallest eigenvalue of the Friedrichs  Laplacian on $k$-forms over a bounded open set $U\subset M$.
\end{lem}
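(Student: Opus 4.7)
The plan is an IMS-type localization of the Hodge Laplacian, applied to a nearly minimizing test form on $B_{x_i}(R_i)$, followed by a pigeonhole selection of the center $y_i$. Let $\alpha := \liminf_i \lambda_o(k, B_{x_i}(R_i))$; we may assume $\alpha < \infty$ and pass to a subsequence along which $\lambda_o(k, B_{x_i}(R_i)) \to \alpha$. For each $i$, fix a test form $\omega_i$ in the Friedrichs domain on $B_{x_i}(R_i)$ with $\|\omega_i\|_{L^2} = 1$ and $\|d\omega_i\|^2 + \|\delta\omega_i\|^2 \leq \lambda_o(k, B_{x_i}(R_i)) + i^{-1}$. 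Before localizing I would shrink the scale to $R_i'' := \min(R_i',\, R_i/3,\, \delta_i^{-1/4})$, which still satisfies $R_i'' \to \infty$ and additionally $\delta_i (R_i'')^2 \to 0$. By Dirichlet domain monotonicity, $\lambda_o(k, B_{y_i}(R_i')) \leq \lambda_o(k, B_{y_i}(R_i''))$, so it suffices to exhibit $y_i \in B_{x_i}(R_i)$ with $\liminf_i \lambda_o(k, B_{y_i}(R_i'')) \leq \alpha$.

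Next, pick a maximal $R_i''/4$-separated net $\{y_{i,j}\}_{j=1}^{N_i}$ in $B_{x_i}(R_i)$, so that the balls $B_{y_{i,j}}(R_i''/4)$ cover $B_{x_i}(R_i)$ while the balls $B_{y_{i,j}}(R_i''/8)$ are disjoint. Since $R_i'' \leq R_i/3$, each $B_{y_{i,j}}(17R_i''/8)$ lies in $B_{x_i}(2R_i)$, and $\delta_i (R_i'')^2 \to 0$ together with Bishop--Gromov volume comparison yields a uniform bound $N = N(n)$ on the multiplicity of the cover $\{B_{y_{i,j}}(R_i'')\}$ for $i$ large. A standard distance-based construction then produces a Lipschitz partition of unity $\{\phi_{i,j}\}$ subordinate to this cover with $\sum_j \phi_{i,j}^2 \equiv 1$ on the support of $\omega_i$, $\phi_{i,j}$ supported in $B_{y_{i,j}}(R_i'')$, and $|\nabla \phi_{i,j}| \leq C/R_i''$, so that $\sum_j |\nabla \phi_{i,j}|^2 \leq CN/(R_i'')^2$ pointwise. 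The main obstacle is exactly this uniform multiplicity estimate: under only $\mathrm{Ric} \geq -\delta_i$, Bishop--Gromov ratios at scale $R_i'$ can blow up exponentially unless $\delta_i (R_i')^2$ is bounded, which is precisely what the auxiliary scale $R_i''$ was introduced to ensure.

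The core computation is an IMS-type identity for forms. From $d(\phi\omega) = d\phi \wedge \omega + \phi\,d\omega$, $\delta(\phi\omega) = \phi\,\delta\omega - \iota_{\nabla\phi}\omega$, the pointwise identity $|d\phi \wedge \omega|^2 + |\iota_{\nabla\phi}\omega|^2 = |\nabla\phi|^2|\omega|^2$, and $\sum_j \phi_{i,j}\nabla \phi_{i,j} = \tfrac12 \nabla \sum_j \phi_{i,j}^2 = 0$, the cross terms cancel upon summation and
\begin{equation*}
\sum_j \bigl( \|d(\phi_{i,j}\omega_i)\|^2 + \|\delta(\phi_{i,j}\omega_i)\|^2 \bigr) \;=\; \|d\omega_i\|^2 + \|\delta\omega_i\|^2 + \int_M |\omega_i|^2 \sum_j |\nabla \phi_{i,j}|^2.
\end{equation*}
Each $\phi_{i,j}\omega_i$ lies in the Friedrichs domain on $B_{y_{i,j}}(R_i'')$, so the left side is at least $\sum_j \lambda_o(k, B_{y_{i,j}}(R_i''))\, \|\phi_{i,j}\omega_i\|^2$. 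Combined with $\sum_j \|\phi_{i,j}\omega_i\|^2 = \|\omega_i\|^2 = 1$ and the error bound $CN/(R_i'')^2 \to 0$, pigeonhole selects $y_i := y_{i,j(i)} \in B_{x_i}(R_i)$ with
\begin{equation*}
\lambda_o(k, B_{y_i}(R_i'')) \;\leq\; \lambda_o(k, B_{x_i}(R_i)) + i^{-1} + \frac{CN}{(R_i'')^2}.
\end{equation*}
Taking $\liminf$ and passing back to $R_i'$ via domain monotonicity yields \eqref{890}.
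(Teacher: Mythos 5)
Your proof is correct and follows essentially the same route as the paper's: a Gromov-type covering with uniform multiplicity from Bishop--Gromov, an IMS-type partition of unity exploiting $\sum_j \phi_j \nabla\phi_j = 0$ to cancel cross terms, and a pigeonhole choice of the center. The only cosmetic differences are that you record the localization as an exact identity in $\|d\cdot\|^2+\|\delta\cdot\|^2$ rather than as an inequality for $(\Delta\cdot,\cdot)$, and you are slightly more explicit than the paper in shrinking to an auxiliary scale $R_i''$ to keep the enlarged balls inside $B_{x_i}(2R_i)$ and to control the volume-doubling constant.
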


\begin{proof}  By the monotonicity of the Friedrichs eigenvalue  with respect to the domain,
we may assume that $R_i'<\min(R_i,\delta^{-1/2}_i)$ without loss of generality.

We consider the \emph{maximum number} of points $y_1,\cdots, y_{N_i}\in B_{x_i}(R_i)$ such that
\begin{enumerate}
\item $B_{y_j}(R_i')$ are disjoint;
\item the collection of open balls $\{B_{y_j}(2 R_i')\}$ is an open cover of $B_{x_i}(R_i)$.
\end{enumerate}

Such a covering is called a Gromov covering. One of the features of the Gromov covering is that, by assumption {\it (3)}, there exists an absolute constant  $C(n)$ with the property that  any $p\in B_{x_i}(R_i)$ is covered by at most $C(n)$ open balls from the collection $\{B_{y_j}(2R_i')\}$.

 Let $\{\rho_j^2\}$ be  the partition of unity subordinate to the covering of $B_{x_i}(R_i)$ such that $\sum_j \rho_j^2 =1$ and $|\nabla \rho_j|^2\leq C/(R_i')^2$.
 For any $\eps>0$, let $\omega=\omega_i$ be
 a smooth $k$-form  with compact support in $B_{x_i}(R_i)$
such that
\[
 (\Delta \w, \w)\leq (\lambda_o(k, B_{x_i}(R_i))+\eps)\|\w\|^2.
\]
Setting $\w_j =\rho_j \w $, and given that  $\sum_j \rho_j \n \rho_j = 0$ in $B_{x_i}(R_i)$, we have
\begin{equation}\label{gromov}
\sum^{N_i}_{j=1}(\Delta \w_j, \w_j)  \leq  C\sum^{N_i}_{j=1}|\nabla\rho_{j}|^2 \, \| \w \|^2   +  (\Delta \w, \w)\leq  \left(\frac{C}{(R_i')^2} \,+\lambda_o(k, B_{x_i}(R_i))+\eps\right) \| \w \|^2.
\end{equation}
Then there exists one $j =j(i)$ such that
\[
(\Delta \w_j, \w_j)  \leq \left({C}{(R_i')^{-2}} \,+\lambda_o(k, B_{x_i}(R_i)\,)+\eps \right) \,\| \w_j \|^2.
\]
Thus
\[
\lambda_o (k, B_{y_j}(R_i')) \leq {C}{(R_i')^{-2}} \,+\lambda_o(k, B_{x_i}(R_i))+\eps.
\]
Renaming $j=j(i)$ by $i$, we prove the lemma by letting $\eps\to 0$ and $R_i'\to\infty$.
\end{proof}

\begin{Def} \label{defMin}
Let $S=\{(x_i, R_i)\}$ be a sequence, where $x_i\in M$ and $R_i>0$ are real numbers.  We say that $S$ is a minimal sequence (for   $k$-forms), if
\begin{enumerate}
\item $R_i\to\infty$;
\item the balls $B_{x_i}(R_i)$ are disjoint;
\item for any compact subset $K$ of $M$, there is a number $i=i(K)$ such that if $i>i(K)$, then $B_{x_i}(R_i)\cap K=\emptyset$;
\item we have
\[
\lim_{i\to\infty} \lambda_o(k, B_{x_i}(R_i))=\lambda_o^{\rm ess}(k, M),
\]
\end{enumerate}
where $\lambda_o^{\rm ess}(k, M)$ is the bottom of the essential spectrum of the Laplacian on $k$-forms.\footnote{If the essential spectrum is an empty set, we define $\lambda_o^{\rm ess}(k, M)=\infty$.}
\end{Def}

 \begin{prop}\label{prop26}
Let $M$ be a complete non-compact Riemannian manifold with asymptotically nonnegative Ricci curvature. Let $R_i$ be a sequence of real numbers such that $R_i\to\infty$. Then there exists a sequence $\{y_i\}_{i=1}^\infty\in M$ such that
$S=\{(y_i, R_i)\}$ is a minimal sequence, and moreover, the balls $B_{x_i}(2R_i)$ as in Lemma \ref{lem7b} are disjoint.
\end{prop}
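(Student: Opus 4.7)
The plan is to combine a Persson-type characterization of $\lambda_o^{\rm ess}(k,M)$ with the Gromov-covering device used in Lemma \ref{lem7b}, via an inductive construction. Recall the standard identity
\[
\lambda_o^{\rm ess}(k,M) \;=\; \sup_{K\Subset M}\;\inf\bigl\{R(\omega):0\neq\omega\in C_c^\infty(\Lambda^k(M\setminus K))\bigr\},
\]
where $R(\omega)=(\Delta\omega,\omega)/\|\omega\|^2$. Consequently, for any $\eps>0$ and any compact $K\subset M$ there exists a nonzero compactly supported $k$-form on $M\setminus K$ whose Rayleigh quotient is at most $\lambda_o^{\rm ess}+\eps$; these forms will be the raw data. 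The only substantive task is to convert them into eigenvalue information for balls of the \emph{prescribed} radius $R_i$.

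I would build $\{y_i\}$ inductively. With $y_1,\dots,y_{i-1}$ already chosen, set
\[
K_{i-1} \;=\; \overline{B_p(i)}\,\cup\,\bigcup_{j<i}\overline{B_{y_j}(2R_i+2R_j+1)},
\]
so that any point $y_i\notin K_{i-1}$ automatically produces a ball $B_{y_i}(2R_i)$ disjoint from every $B_{y_j}(2R_j)$, $j<i$. Using asymptotic nonnegativity of Ricci, pick $\delta_i\downarrow 0$ with $\delta_i<R_i^{-2}$ and a radius $T_i$ such that $\mathrm{Ric}\ge -\delta_i$ on $M\setminus B_p(T_i)$. Invoking the Persson formula, select a test form $\omega_i$ supported outside $K_{i-1}\cup B_p(T_i+10R_i)$ with $R(\omega_i)\le\lambda_o^{\rm ess}+1/i$. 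Now repeat the proof of Lemma \ref{lem7b}: cover $\supp\omega_i$ by a Gromov packing at scale $R_i$, with subordinate partition of unity $\{\rho_j^2\}$ satisfying $|\nabla\rho_j|^2\le C/R_i^2$ and multiplicity $\le C(n)$ (the bound on overlap comes from the Ricci bound, which holds on the $2R_i$-neighborhood of $\supp\omega_i$ by the buffer). The identity $\sum_j\rho_j\nabla\rho_j=0$ then yields
\[
\sum_j \bigl(\Delta(\rho_j\omega_i),\rho_j\omega_i\bigr)\;\le\;\Bigl(R(\omega_i)+\tfrac{C}{R_i^{2}}\Bigr)\|\omega_i\|^2,
\]
and averaging over $j$ extracts a center $y_i$ of the packing for which
\[
\lambda_o\bigl(k,B_{y_i}(R_i)\bigr)\;\le\;\lambda_o^{\rm ess}(k,M)+\tfrac{1}{i}+\tfrac{C(n)}{R_i^{2}}.
\]

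Properties (1)--(3) of Definition \ref{defMin} and the extra disjointness of $B_{y_i}(2R_i)$ are built into the inductive choice of $K_{i-1}$; the same buffer also forces $d(y_i,p)\ge T_i+8R_i\to\infty$. The display above gives $\limsup_i\lambda_o(k,B_{y_i}(R_i))\le\lambda_o^{\rm ess}$, and the matching lower bound follows from a second application of Persson: for any $\eps>0$ there is $N_\eps$ with $\inf_{C_c^\infty(M\setminus B_p(N_\eps))}R\ge\lambda_o^{\rm ess}-\eps$, and for $i\gg 1$ the ball $B_{y_i}(R_i)$ lies in $M\setminus B_p(N_\eps)$, so $\lambda_o(k,B_{y_i}(R_i))\ge\lambda_o^{\rm ess}-\eps$. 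The main technical hurdle is the scale-reconciliation: the Persson test forms come with no a~priori control on the diameter of their support, whereas the target radius $R_i$ is prescribed; the Gromov covering is precisely the device that bridges these scales at an eigenvalue cost of $O(R_i^{-2})$, which vanishes as $R_i\to\infty$. The asymptotic Ricci bound enters solely here, through the volume doubling that controls the multiplicity of the packing.
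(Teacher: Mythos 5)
Your argument is correct and rests on the same core mechanism as the paper's: a Gromov packing at scale $R_i$ with multiplicity controlled by the asymptotic Ricci bound, a subordinate partition of unity with $|\nabla\rho_j|^2\le C/R_i^2$, a pigeonhole extraction of a good center, and the decomposition principle for the matching lower bound. The only structural difference is one of packaging — the paper starts from a pre-existing minimal sequence $\tilde S=\{(x_i,\tilde R_i)\}$ (whose existence it takes as known) and applies Lemma~\ref{lem7b} as a black box, whereas you construct the sequence from scratch using Persson-type test forms and rederive the covering estimate inline; this makes your version slightly more self-contained but not materially different in approach.
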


\begin{proof} Let $\tilde S=\{(x_i, \tilde R_i)\}$ be a minimal sequence. Since the balls $B_{x_i}(\tilde R_i)$ leave any compact subset of $M$ for $i$ large enough, we can assume that   $\mathrm{Ric} \geq   - \delta_i$  on $B_{x_i}(\tilde R_i)$ for a sequence of $\delta_i\to 0$. For any sequence $R_i\to\infty$, by passing to a subsequence if necessary, we may assume that $R_i<\tilde R_i$. Moreover, since $R_i$ is given, using assumption {\it (3)} for a minimal sequence we can inductively choose a sequence $\{x_i\}$ that satisfies
\begin{equation}\label{745}
d(x_i, x_j)>4R_i+\tilde R_j+\tilde R_i,
\end{equation}
for any $j<i$.

By Lemma~\ref{lem7b}, there is  a sequence $S=\{(y_i, R_i)\}$ such that ~\eqref{890} is valid, where $y_i\in B_{x_i}(R_i)$.
Inequality~\eqref{745} implies a stronger type of separation, namely, that  the geodesic balls  $B_{y_i}(2R_i)$ are disjoint.
 Since the $B_{y_i}(R_i)$ are disjoint, then $S$ must be a minimal sequence. This completes the proof of the proposition.
\end{proof}

Minimal sequences always exists. However, the size of the expanding balls is not easy to control. Therefore,
the following is an interesting conjecture.
\begin{conjecture}
Let $M$ be a complete Riemannian manifold with Ricci curvature bounded from below.
Given a sequence of real numbers $R_i\to\infty$, we can  find a sequence $y_i\in M$ such that $\{(y_i, R_i)\}$ is a minimal sequence.\end{conjecture}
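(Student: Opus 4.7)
The plan is to adapt the two-step strategy of Proposition~\ref{prop26}: start with an existing minimal sequence and re-center it so that the balls have the prescribed radii $R_i$. The existence of \emph{some} minimal sequence $\tilde S=\{(x_i,\tilde R_i)\}$ follows from the Rayleigh quotient characterization of $\lambda_o^{\rm ess}(k,M)$ combined with decoupling at infinity. Since $\tilde R_i\to\infty$, after passing to a subsequence we may assume $\tilde R_i\geq R_i$; the resulting subsequence is still a minimal sequence, and by Dirichlet monotonicity $\liminf_i \lambda_o(k,B_{y_i}(R_i))\geq \lambda_o^{\rm ess}(k,M)$ holds automatically for any centers $y_i\to\infty$. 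The problem therefore reduces to an analogue of Lemma~\ref{lem7b}: producing, for each $i$, a point $y_i\in B_{x_i}(\tilde R_i)$ with
\[
\lambda_o\bigl(k,B_{y_i}(R_i)\bigr)\leq \lambda_o\bigl(k,B_{x_i}(\tilde R_i)\bigr)+o(1).
\]
Once such $y_i$ are in hand, the disjointness and escape-to-infinity conditions can be enforced inductively as in~\eqref{745}.

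For the localization step I would follow the Gromov-covering template of Lemma~\ref{lem7b}. Fixing an $\eps$-minimizer $\omega$ of the Friedrichs Rayleigh quotient on $B_{x_i}(\tilde R_i)$, take a maximal $R_i$-packing $\{B_{y_j}(R_i)\}\subset B_{x_i}(\tilde R_i)$ so that $\{B_{y_j}(2R_i)\}$ covers, choose a subordinate partition with $\sum_j\rho_j^2=1$ and $|\nabla\rho_j|\leq C/R_i$, and invoke the IMS/Bochner-Weitzenb\"ock identity to obtain
\[
\sum_j(\Delta(\rho_j\omega),\rho_j\omega)\leq (\Delta\omega,\omega)+\frac{C\cdot\mathrm{mult}}{R_i^{\,2}}\|\omega\|^2,
\]
where $\mathrm{mult}$ denotes the pointwise multiplicity of the cover $\{B_{y_j}(2R_i)\}$. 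Since $\sum_j\|\rho_j\omega\|^2=\|\omega\|^2$, averaging produces an index $j(i)$ with Rayleigh quotient of $\rho_{j(i)}\omega$ at most $(\Delta\omega,\omega)/\|\omega\|^2 + C\cdot\mathrm{mult}/R_i^{\,2}$, and we would set $y_i=y_{j(i)}$.

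The main obstacle is precisely the control on $\mathrm{mult}$. In Lemma~\ref{lem7b} the hypothesis $\delta_i(R_i')^2\to 0$ is exactly what forces $\mathrm{mult}\leq C(n)$ via Bishop--Gromov volume comparison. Under only $\mathrm{Ric}\geq -K$, Bishop--Gromov instead delivers $\mathrm{mult}\leq C(n)\exp\bigl(c(n)\sqrt{K}\,R_i\bigr)$, which is \emph{exponentially large} in $R_i$. Consequently the error $\mathrm{mult}/R_i^{\,2}$ diverges rather than vanishes, and the naive Gromov-covering localization fails to transplant an $\eps$-minimizer from $B_{x_i}(\tilde R_i)$ to a ball of prescribed radius $R_i$. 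Bypassing this exponential blow-up seems to require new input: perhaps a sparser cut-off scheme combined with a quantitative mass non-concentration estimate forcing a positive proportion of $\|\omega\|^2$ to live in a single $R_i$-ball; or a direct construction of Weyl sequences whose supports have prescribed diameter via heat-kernel or semiclassical quasi-mode methods; or a Cheeger--Colding compactness argument applied at centers where the Ricci curvature is near-extremal, followed by a Hodge-theoretic transplantation of eigenforms from the limit space. Each route lies outside the techniques used elsewhere in this paper, which is likely why the statement is only conjectural.
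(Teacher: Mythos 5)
This statement is explicitly labeled a \emph{conjecture} in the paper, and the authors offer no proof of it; the only remarks they make are that it holds under asymptotically nonnegative Ricci curvature (via Proposition~\ref{prop26}) and for hyperbolic space forms. So there is no ``paper proof'' to compare your attempt against, and you are right to conclude that the argument does not close.

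Your diagnosis of the obstruction is exactly correct and is, in effect, the reason the paper leaves the statement open. The localization step in Lemma~\ref{lem7b} is not a free lunch: the choice $R_i'<\min(R_i,\delta_i^{-1/2})$ is what keeps the quantity $\delta_i(R_i')^2$ bounded, which via Bishop--Gromov caps the multiplicity of the Gromov cover $\{B_{y_j}(2R_i')\}$ by a dimensional constant $C(n)$, so that the IMS error $C/(R_i')^2$ vanishes as $R_i'\to\infty$. Under the weaker standing hypothesis $\mathrm{Ric}\geq -K$ with $K$ fixed, a maximal $R_i$-packing of $B_{x_i}(\tilde R_i)$ has cover multiplicity of order $e^{c(n)\sqrt{K}R_i}$ (count disjoint $R_i$-balls inside a $3R_i$-ball by volume comparison), so the cut-off error $\mathrm{mult}\cdot R_i^{-2}$ diverges and the averaging step no longer produces a good ball of radius $R_i$. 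None of your suggested workarounds (non-concentration, quasi-modes, Cheeger--Colding transplantation) is used anywhere in the paper, and they would indeed require new ideas. One additional observation worth recording: the paper's remark that the conjecture holds for hyperbolic space forms is consistent with your analysis, since there one has explicit generalized eigenforms with uniform pointwise control and homogeneity, so the localization can be done by hand without a Gromov cover. In short, you have not proved the statement, but you have correctly explained why it is stated only as a conjecture and why Lemma~\ref{lem7b} does not extend to a fixed negative Ricci lower bound.
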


By Proposition \ref{prop26}, if $M$ has asymptotical nonnegative Ricci  curvature, then the conjecture is true.  The conjecture is also true when $M$ is a hyperbolic space form.

\section{Continuity of the Spectrum in the Noncollapsing Case}\label{7}

 In this section we include some known results about the form spectrum of product manifolds and flat manifolds. Then we will see how to obtain the computability of the spectrum when the minimal sequence is close to a product space.

\begin{Def} \label{lem81} Let $m<n$ be a positive  integer. Let $K^{n-{m}}$  be a compact flat manifold of dimension $n-m$. As before, denote by ${\lambda_o(l,K^{n-m})}$
the smallest eigenvalue of the Laplacian $\Delta$ on $l$-forms over $K$ for $0\leq l \leq n-m$. By  Poincar\'e duality, we have  $\lambda_o(l,K^{n-m})=\lambda_o(n-m-l,K^{n-m})$.

Define $\alpha( K,m,n,k)=0$ when $m\geq n/2$, or when $m<n/2$  and  either $0\leq k\leq m$, or $n-m\leq k\leq n$. When $m+1\leq k\leq n/2$ define $\alpha( K,m,n,k)=\min\{ \,\lambda_o(k-l,K^{n-m}) \mid 0\leq l\leq m\}$,  and set $\alpha(K,m,n,k)=\alpha(K,m,n,n-k)$, when $n/2< k\leq n-(m+1)$.
\end{Def}

For $0\leq k\leq n$, a simple characterization of $\alpha( K,m,n,k)$ is given by the following  (cf.~\cite{ChLu5})
\begin{equation}\label{657}
\sigma(k,\Delta, K^{n-m}\times \mathbb{R}^{m})=\sigma_{\rm ess}(k,\Delta, K^{n-m}\times \mathbb{R}^{m})=[\alpha(K,m,n,k),\infty).
\end{equation}
 In particular, $\alpha(K,m,n,k)$ is the bottom of the spectrum over $K^{n-m}\times \mathbb{R}^{m}$ for $k$-forms.

\begin{thm}[\cite{ChLu5}] \label{thmF2}
Let $X=\mathbb{R}^n/\Gamma$ be a flat noncompact Riemannian manifold, and  $0\leq k\leq n$. Then there exist a nonnegative integer $m$ and  a compact flat manifold $K^{n-m}$ such that
\[
\sigma(k,\Delta, M)=\sigma_{\rm ess}(k,\Delta, M)=\sigma(k,\Delta, K^{n-m}\times \mathbb{R}^{m})=[\alpha(K,m,n,k),\infty).
\]
\end{thm}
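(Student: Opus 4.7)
The plan is to use the Bieberbach-type structure theory of complete flat manifolds to reduce $X = \mathbb{R}^n/\Gamma$ to a product model, and then invoke the product spectrum formula \eqref{657}.

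First, I would apply the structure theorem for complete flat Riemannian manifolds. The discrete group $\Gamma \subset \mathrm{Iso}(\mathbb{R}^n) = O(n) \ltimes \mathbb{R}^n$ contains a normal subgroup $\Gamma_0$ of finite index consisting purely of translations (the kernel of the holonomy homomorphism $\Gamma \to O(n)$). The lattice $\Gamma_0$ spans a linear subspace $V \subset \mathbb{R}^n$ of some dimension $n-m$. In the orthogonal decomposition $\mathbb{R}^n = V \oplus V^\perp$, the quotient $\tilde X := \mathbb{R}^n/\Gamma_0$ is isometric to $K^{n-m} \times \mathbb{R}^m$, where $K^{n-m} := V/\Gamma_0$ is a compact flat torus; moreover $\tilde X \to X$ is a finite normal Riemannian cover with finite deck group $G := \Gamma/\Gamma_0$ acting by isometries that preserve the product splitting. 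By \eqref{657}, one has $\sigma(k,\Delta,\tilde X) = \sigma_{\rm ess}(k,\Delta,\tilde X) = [\alpha(K,m,n,k),\infty)$.

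Next I would transfer the spectrum from $\tilde X$ down to $X$ via the finite cover. The inclusion $\sigma(k,\Delta,X) \subset \sigma(k,\Delta,\tilde X)$ follows from lifting: $L^2(\Lambda^k(X))$ embeds isometrically into the $G$-invariant subspace of $L^2(\Lambda^k(\tilde X))$, and the Laplacian commutes with this embedding, so Weyl sequences lift. For the reverse inclusion, given any $\lambda \geq \alpha(K,m,n,k)$, I would construct an explicit Weyl sequence on $\tilde X$ from a $K$-eigenform wedged with a plane wave on $\mathbb{R}^m$, cut off to have compact support in a slab of the form $K^{n-m} \times B_R(y_i)$ with centers $y_i \in \mathbb{R}^m$ marched off to infinity. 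Because $G$ acts isometrically preserving the splitting and has finite orbits, the centers $y_i$ can be chosen so that the $G$-translates of each slab are pairwise disjoint. The sum of the $G$-translates is then a nonzero $G$-invariant approximate eigenform on $\tilde X$ with essentially the same Rayleigh quotient, and descends to a Weyl sequence on $X$ witnessing $\lambda \in \sigma_{\rm ess}(k,\Delta,X)$. The equality $\sigma(k,\Delta,X) = \sigma_{\rm ess}(k,\Delta,X)$ follows since, by Fourier analysis on the universal cover $\mathbb{R}^n$ restricted to $\Gamma$-equivariant forms, there are no nonzero $L^2$ eigenforms of $\Delta$ on $X$.

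The main obstacle is the structure theorem step in the noncompact case, where the classical Bieberbach argument does not directly yield a finite-index translation subgroup: the holonomy image of $\Gamma$ in $O(n)$ may be infinite, as in screw-motion quotients. Handling this requires a refined splitting of $\mathbb{R}^n$ along the Euclidean directions on which $\Gamma$ acts without rotational part, so that the remaining ``twisted'' directions still admit a compact flat model $K$ capturing the correct value of $\alpha(K,m,n,k)$; one also has to verify carefully that the $G$-averaging step preserves the approximate eigenform property without cancellation, which is where the spatial separation of translates in $\mathbb{R}^m$ is essential. These two issues together form the technical heart of the argument.
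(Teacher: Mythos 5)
This theorem is quoted in the paper from reference [ChLu5] and is not proved here, so there is no in-paper argument to compare against; I assess the proposal on its own terms.

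Your opening step is the crux, and it is false in the noncompact setting: the second Bieberbach theorem, which guarantees a finite-index pure-translation subgroup $\Gamma_0 \lhd \Gamma$, requires $\Gamma$ to be cocompact. For $X=\mathbb{R}^n/\Gamma$ noncompact this can fail outright. Your own counterexample is the sharpest one: for $\Gamma = \mathbb{Z}$ acting on $\mathbb{R}^3$ by $\gamma(x,y,z) = (x+1, R_\theta(y,z))$ with $\theta/\pi$ irrational, the holonomy homomorphism $\Gamma\to O(3)$ is injective, so $\Gamma_0 = \{e\}$, $\tilde X = \mathbb{R}^3$, and the ``deck group'' $G = \Gamma$ is infinite. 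Your entire mechanism -- the finite normal cover $\tilde X = K^{n-m}\times\mathbb{R}^m \to X$, the identification of $L^2(\Lambda^k X)$ with a $G$-invariant reducing subspace of $L^2(\Lambda^k\tilde X)$, and the $G$-averaging of disjointly supported Weyl sequences -- presupposes $|G|<\infty$, and none of it survives once that hypothesis is dropped. Flagging the problem at the end does not repair the argument: the ``refined splitting'' you gesture at \emph{is} the theorem, and the part you treat as the main content (transferring Weyl sequences through a finite cover) is the easy part.

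What is actually needed is a decomposition that does not realize $X$ as a finite quotient of a metric product. The relevant structure is the soul decomposition for complete flat manifolds: $X$ is a flat normal vector bundle over a compact flat submanifold $K$, generally with nontrivial (possibly infinite-image) holonomy $\rho: \pi_1(K)\to O(m)$, and the theorem asserts that the spectrum nonetheless agrees with the \emph{untwisted} model $K\times\mathbb{R}^m$. Establishing that requires an argument sensitive to the holonomy: when $\rho$ has finite image one can pass to a finite cover of $K$ trivializing the bundle and something like your cover argument becomes available; but when the closure of $\rho(\pi_1 K)$ has positive dimension -- as in the irrational screw motion -- the injectivity radius of $X$ tends to infinity along appropriate sequences going to infinity, so large balls in $X$ become locally Euclidean and one shows directly that $\sigma_{\rm ess}(k,\Delta,X)=[0,\infty)$, which is consistent with the model since in that regime $m\geq n/2$ and $\alpha(K,m,n,k)=0$. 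Your proposal contains no mechanism for this case. Finally, the assertion that $X$ carries no nonzero $L^2$ eigenforms, which you use to conclude $\sigma=\sigma_{\rm ess}$, is stated without justification; it is true, but it needs its own argument (for instance a direct-integral decomposition of $L^2(\Lambda^k X)$ along the translation directions showing that the fiberwise operators contribute only continuous spectrum), and it is not an immediate consequence of ``Fourier analysis on the universal cover.''
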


The following result allows to compute the essential spectrum of a noncompact manifold whenever the minimal sequence is close, in the Gromov-Hausdorff sense, to a sequence of product manifolds of the type compact manifold times a Euclidean space. We shall apply this result in the case that the limit of the minimal sequence is a smooth manifold. The proof adapts an argument of Dodziuk in \cite{dod} to the noncompact case, and uses  similar arguments as our paper~\cite{ChLu6}  as well as  the generalized Weyl criterion, Theorem~\ref{Thm.Weyl.bis-4}.  For the sake of completion we include the proof in the Appendix.

\begin{thm} \label{thmSpecGH}
Let $(M,g_M)$  be  a  complete noncompact manifold, and  $M_i=B_{x_i}(R_i)$ a sequence of disjoint geodesic balls in $M$ with $R_i\to\infty$. Let $N_i$ be a sequence of compact Riemannian manifolds whose diameter is uniformly bounded  and denote by $g_i$ the product metric on $N_i\times \R^{m}$.  Assume  that there exist smooth maps
\[
f_i : B_{x_i}(R_i) \to N_i\times \R^{m}
\]
which are differomorphims onto their image, and which
 satisfy
\begin{equation} \label{thm7e1}
|g_{M_i}-f_i^*g_i| \leq \Psi(i^{-1}).
\end{equation}

Then for any $0\leq k\leq n$
\[
\sigma_{\rm ess}(k,\Delta, M) \supset  [\alpha_k,\infty),
\]
where
\[
\alpha_k=\liminf_{i\to\infty} \alpha(N_i,m,n,k),
\]
and $\alpha(N_i,m,n,k)$ is as in Definition~\ref{lem81}.  If in addition, $S=\{(x_i, R_i)\}$ is a minimal sequence, then
\begin{equation}\label{279}
\sigma_{\rm ess}(k,\Delta, M) =[\alpha_k,\infty).
\end{equation}
In particular, in this case, we have $\alpha_k=\lambda_o^{\rm ess}$.
\end{thm}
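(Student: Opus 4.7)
The plan is to establish the two inclusions separately, transferring approximate eigenforms from the product spaces $N_i\times \R^m$ (whose spectrum is already known from \eqref{657}) to $M$ via the diffeomorphisms $f_i$, and then applying the generalized Weyl criterion of Theorem~\ref{Thm.Weyl.bis-4}. The hypothesis $|g_{M_i}-f_i^*g_i|\leq \Psi(i^{-1})$ is used only at the level of quadratic forms, where the metric enters pointwise and no control of metric derivatives is required.

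For the forward inclusion, I fix $\lambda\geq \alpha_k$ and pass to a subsequence with $\alpha(N_i,m,n,k)\to \alpha_k$. On each $N_i\times\R^m$ I construct approximate eigenforms of the standard split form $\omega_i=\psi_i\wedge \tilde\phi_i$, where $\psi_i$ is an eigen-$(k-l_i)$-form on $N_i$ realizing the minimum defining $\alpha(N_i,m,n,k)$ and $\tilde\phi_i$ is a truncated plane wave on $\R^m$ with wave vector chosen so that the product eigenvalue equals $\lambda$. A routine computation using the product decomposition of $\Delta$ gives $\|(\Delta-\lambda)\omega_i\|/\|\omega_i\|=O(r_i^{-1})$ as the cutoff radius $r_i\to\infty$. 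Because $\mathrm{diam}(N_i)$ is uniformly bounded and $R_i\to\infty$, the image $f_i(B_{x_i}(R_i))$ contains $N_i\times B_0(r_i)$ for $r_i$ of order $R_i$, so I may arrange $\supp\omega_i\subset f_i(B_{x_i}(R_i))$. Setting $\eta_i=(f_i^{-1})^*\omega_i$ on $B_{x_i}(R_i)$ and extending by zero to $M$, the disjointness of the balls $M_i$ forces the $\eta_i$ to have pairwise disjoint supports, and in particular $\eta_i\rightharpoonup 0$ weakly in $L^2(M)$. The $C^0$-closeness of the metrics implies that both the $L^2$ norm and the Hodge quadratic form $Q(\eta,\eta)=\int(|d\eta|^2+|\delta\eta|^2)\,dV$ transform with multiplicative error $1+\Psi(i^{-1})$, which is exactly the level of regularity Theorem~\ref{Thm.Weyl.bis-4} is designed to accept; this yields $\lambda\in\sigma_{\rm ess}(k,\Delta,M)$.

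For the reverse inclusion under the minimality assumption, Definition~\ref{defMin} gives $\lambda_o^{\rm ess}(k,M)=\lim_i \lambda_o(k,B_{x_i}(R_i))$. Applying the same quadratic-form comparison in the opposite direction yields
\[
\lambda_o(k,B_{x_i}(R_i),g_M)\geq (1-\Psi(i^{-1}))\, \lambda_o(k,f_i(B_{x_i}(R_i)),g_i),
\]
and since $f_i(B_{x_i}(R_i))$ is a bounded open subset of $N_i\times\R^m$, the standard domain monotonicity of the Friedrichs eigenvalue together with \eqref{657} forces
\[
\lambda_o(k,f_i(B_{x_i}(R_i)),g_i)\geq \inf\sigma(k,\Delta,N_i\times \R^m)=\alpha(N_i,m,n,k).
\]
Taking $\liminf_{i\to\infty}$ gives $\lambda_o^{\rm ess}(k,M)\geq \alpha_k$, which combined with the forward inclusion establishes \eqref{279} and the identification $\alpha_k=\lambda_o^{\rm ess}$.

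The main obstacle is the stability of the Hodge quadratic form under a merely $C^0$ metric perturbation: the codifferential $\delta_g$ ordinarily requires one derivative of $g$, so the pointwise $L^2$ quantity $\|\Delta\eta_i-\lambda\eta_i\|_{L^2}$ need not be small even if the metrics are uniformly close, and the classical Weyl criterion would fail. The resolution is to express $\|\delta_g\omega\|_g^2$ variationally as $\sup_\eta|(\omega,d\eta)_g|^2/\|\eta\|_g^2$, whose numerator and denominator depend on the metric only through pointwise inner products and the volume form; the generalized Weyl criterion then promotes form-level approximate eigenforms to essential-spectrum certificates. A subsidiary technical point -- choosing the truncation radii $r_i\to\infty$ simultaneously with $1/r_i=o(1)$ and $N_i\times B_0(r_i)\subset f_i(B_{x_i}(R_i))$ -- is handled by a routine diagonal argument.
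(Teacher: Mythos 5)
Your approach is sound in outline (pull back approximate eigenforms through $f_i$, exploit disjointness for weak convergence, invoke the generalized Weyl criterion), and the reverse inclusion via domain monotonicity is fine. But there is a genuine gap in the crucial step where you compare quadratic forms across the metrics: the assertion that the full Hodge quadratic form
$Q(\omega)=\int\bigl(|d\omega|^2+|\delta\omega|^2\bigr)\,dV$
transforms with multiplicative error $1+\Psi(i^{-1})$ under a merely $C^0$ perturbation of the metric is \emph{false}. The codifferential $\delta_g$ involves first derivatives of $g$, and this sensitivity is not washed out in $L^2$. Concretely, take $\omega$ a cutoff of $dx^1$ on $\R^n$, $g_1$ the Euclidean metric and $g_2=g_1+\eps\sin(kx^1)\,(dx^1)^2$. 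Then $\|g_2-g_1\|_{C^0}\leq\eps$ and $\delta_{g_1}\omega=0$ on the interior, yet $\delta_{g_2}\omega\sim\tfrac{1}{2}\eps k\cos(kx^1)$, so $\|\delta_{g_2}\omega\|^2_{L^2}\sim\eps^2 k^2$ can be made arbitrarily large with $\eps$ held fixed. Hence $Q_{g_2}(\omega)/Q_{g_1}(\omega)$ is unbounded. Your proposed ``resolution'' via the variational identity $\|\delta_g\omega\|_g=\sup_\eta|(\omega,d\eta)_g|/\|\eta\|_g$ does not help: under a $C^0$ metric change the numerator picks up an error term of size $\Psi\,\|\omega\|\,\|d\eta\|$, and the supremum is realized by $\eta$ proportional to $\delta_g\omega$, for which $\|d\eta\|/\|\eta\|$ is uncontrolled; this is precisely what the example above manifests. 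So the pulled-back forms $\eta_i=(f_i^{-1})^*\omega_i$ need not satisfy the generalized Weyl criterion for the full Laplacian $\Delta$.

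The ingredient your proof is missing is the decomposition that the paper uses to sidestep the codifferential: write $\Delta=\mathcal{L}^1+\mathcal{L}^2$ with $\mathcal{L}^1=\delta d$ and $\mathcal{L}^2=d\delta$, observe via the Hodge decomposition that $\sigma_{\rm ess}(k,\mathcal{L}^1)\cup\sigma_{\rm ess}(k,\mathcal{L}^2)\setminus\{0\}\subset\sigma_{\rm ess}(k,\Delta)\subset\sigma_{\rm ess}(k,\mathcal{L}^1)\cup\sigma_{\rm ess}(k,\mathcal{L}^2)$, and use Poincar\'e duality ($*$, an isometry) to identify $\sigma_{\rm ess}(k,\mathcal{L}^2)$ with $\sigma_{\rm ess}(n-k,\mathcal{L}^1)$. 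The quadratic form of $\mathcal{L}^1$ is $Q^1(\omega)=\|d\omega\|^2$, which involves only the exterior derivative (metric-independent, commutes with pullback) and the pointwise $L^2$ pairing; this \emph{is} $C^0$-stable, and $Q^1_M(f_i^*\psi_i)=(1\pm\Psi(i^{-1}))\,Q^1_i(\psi_i)$ holds with no issue. From there the resolvent comparison needed by the generalized Weyl criterion follows by the perturbation argument in \cite{ChLu6}. Without the $\delta d$/$d\delta$ split and the duality step, the $C^0$ hypothesis \eqref{thm7e1} is too weak to control the term $\|\delta\omega\|^2$, and the argument breaks.
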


Note that we \emph{do not require} the convergence of the sequence $N_i$. This is important when proving the results in the next two sections.

\section{The collapsing case: smooth limit} \label{S4}

In this section, we prove Theorem~\ref{thm1} in a special case.  We will show that a minimal sequence can be approximated by an almost product structure after an appropriate blow down. In all of the remaining paper  we use $C$ to denote a  constant that depends only on the dimension of the manifolds.

Let $n\geq m$.
Let $(W^n,g)$ and $(X^m,h)$ be two Riemannian manifolds such that
\[
F: W\to X
\]
is a smooth map. We   assume that $F$ has full rank, that is, ${\rm rank} \,F_*=m$ at any point.  Fixing a point $q\in X$, we assume that $N=F^{-1}(q)$ is a compact connected submanifold in $W$ ($N$ is trivial if $n=m$). By~\cite{CFG}, if the injectivity radius of $W$ is small while the injectivity radius of $X$  is bounded from below,  then $W$ is a fiber bundle over $X$ whose fibers are nilpotent manifolds. Therefore at each point of $X$, there is a neighborhood over which the fiber bundle is trivial.  In Lemma~\ref{lem51}, we prove an  effective version of  the above result.

We introduce some basic notation and assumptions so that we can state the lemma.
Denote by $I\!I$  the second fundamental form of $N$ in $W$.
Let $Rm(W)$ and $Rm(X)$ be the curvature tensors of $W$ and $X$, respectively.

We assume that \begin{equation}\label{new2}
\max |Rm(W)| + \max |Rm(X)|\leq 1.
\end{equation}
Then by ~\cite{CFG}, there is a constant $C_1$ such that
\begin{equation}\label{new1}
|\nabla^2 F| +|I\!I|\leq C_1.
\end{equation}
In addition, we assume that the injectivity radius of $X$ is at least $\sigma>0$.

 Fix an orthonormal basis $\{v_1, \ldots, v_m\}$  of $T_qX$ and let $\{v_1^*, \ldots, v_m^*\}$ denote its lift to $W$  via $F$ in the following sense
\begin{enumerate}[\ \ $(i)$]
\item $F_* v_\alpha^*=v_\alpha$ for $\alpha=1,\cdots, m$;
\item  $v^*_\alpha\perp N$, that is, for any $p\in N$, $(v^*_\alpha)_p\perp T_p N$.
\end{enumerate}
Since ${\rm rank} \,F_*=m$, these lifts  exist and are unique.

Let ${\bf a}=(a_1,\cdots, a_m)\in\R^m$ with $|{\bf a}|<r$ for some $r<\sigma$, the injectivity radius of $X$. We consider the geodesic $\sigma_{\bf{a}}(t)$ in $M$  such that $\sigma_{\bf a} (0) = p\in N\subset W$, and $\sigma'_{\bf a}(0)=\sum a_i v_i^*$.
Define the map $G$ as
\[
G: N\times B_r(T_qX) \to W, \qquad (p, {\bf a})\mapsto \sigma_{\bf a}(1),
\]
where $B_r(T_q(X))$ is the Euclidean ball of radius $r$ in $T_qX$.

\begin{lem} \label{lem51}
We use the above notation, and assume that $|{\bf a}|$ is sufficiently small.
Let $g_W, g_N, g_{\R^m}$ be the corresponding Riemannian metrics over $W, N, \R^m$, respectively. Then
\begin{equation}\label{234}
|G^*g_W-g_N\times g_{\R^m}|_{G^*g_{W}}\leq C|\bf a|.
\end{equation}

\end{lem}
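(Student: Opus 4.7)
The plan is to view $G$ as the normal exponential map of $N$ in $W$ parametrized through the frame $\{v_\alpha^*\}$, and to estimate $G^*g_W$ by analyzing the differential $G_*$ via Jacobi fields along the radial geodesics $\sigma_{\mathbf a}(t) = \exp_p(tV)$ with $V = \sum_\alpha a_\alpha v_\alpha^*(p)$. The baseline case $\mathbf a = 0$ is exact: $G(\cdot,0)$ is the inclusion $N\hookrightarrow W$, so the $N$-block of $G^*g_W$ equals $g_N$; the vectors $G_*(0,\partial_{a_\alpha}) = v_\alpha^*(p)$ are perpendicular to $T_pN$ by property $(ii)$ of the lifts, so the mixed block vanishes; and the $\mathbf a\mathbf a$-block is $\langle v_\alpha^*,v_\beta^*\rangle(p) = \delta_{\alpha\beta}$ (using that in the Cheeger--Fukaya--Gromov setting $F$ may be arranged to be a Riemannian submersion at $p$). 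Hence $G^*g_W|_{\mathbf a=0}$ agrees with $g_N\times g_{\mathbb R^m}$, and \eqref{234} must come solely from a first-order expansion in $\mathbf a$.

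For $(p,\mathbf a)$ with $|\mathbf a|$ small, I would express the two factors of $G_*$ via Jacobi fields. For $X\in T_pN$, pushing a curve $p_s\subset N$ with $p_s'=X$ under $G(\cdot,\mathbf a)$ yields a geodesic variation of $\sigma_{\mathbf a}$, so $G_*(X,0) = Y_X(1)$, where $Y_X$ solves the Jacobi equation along $\sigma_{\mathbf a}$ with $Y_X(0)=X$ and $Y_X'(0) = \nabla_X V = \sum_\alpha a_\alpha(\nabla_X v_\alpha^*)(p)$. Similarly $G_*(0,\partial_{a_\alpha}) = Z_\alpha(1)$, where $Z_\alpha(0)=0$ and $Z_\alpha'(0) = v_\alpha^*(p)$. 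Hypothesis~\eqref{new1} controls $|\nabla_X v_\alpha^*|\le C_1$ along $N$: its tangential component equals $-A_{v_\alpha^*}X$ and is bounded by $|I\!I|$, while its normal component encodes the variation of the horizontal splitting determined by $F$ and is bounded by $|\nabla^2 F|$. Consequently $|Y_X'(0)|\le C_1|\mathbf a|\,|X|$.

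Because $|\sigma_{\mathbf a}'|=|V|\le|\mathbf a|$ and $|Rm(W)|\le 1$, the Jacobi equation gives $|Y''|\le|\mathbf a|^2|Y|$, and a standard Gronwall/Rauch comparison in a parallel frame along $\sigma_{\mathbf a}$ yields, for $t\in[0,1]$,
\[
Y_X(1) = P_1\bigl(X + Y_X'(0)\bigr) + O\bigl(|\mathbf a|^2|X|\bigr), \qquad Z_\alpha(1) = P_1\bigl(v_\alpha^*(p)\bigr) + O\bigl(|\mathbf a|^2\bigr),
\]
where $P_1$ denotes parallel transport along $\sigma_{\mathbf a}$ from $t=0$ to $t=1$. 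Using that $P_1$ is an isometry, that $\{v_\alpha^*(p)\}$ is orthonormal, and that $X\perp v_\alpha^*(p)$, the three blocks of $\langle G_*\cdot,G_*\cdot\rangle_W$ then compute to $(g_N)_{jk}+O(|\mathbf a|)$, $\delta_{\alpha\beta}+O(|\mathbf a|^2)$, and $O(|\mathbf a|)$ respectively; the dominant first-order error is the $Y_X'(0)$ contribution in the $NN$- and mixed blocks, which yields \eqref{234}.

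The main obstacle I expect is the bound $|\nabla_X v_\alpha^*|\le C_1$: the lifts $v_\alpha^*$ are defined only implicitly through $F_* v_\alpha^* = v_\alpha$ and perpendicularity to $N$, and differentiating them along $N$ entangles the bending of $N$ in $W$ (captured by the second fundamental form) with the rotation of the horizontal distribution of $F$ (captured by $\nabla^2 F$). The joint hypothesis \eqref{new1} is precisely tailored to control both contributions simultaneously, so this estimate is the key place where \eqref{new1} is used essentially; once it is established, the remainder is a routine Jacobi field comparison.
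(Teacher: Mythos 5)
Your proposal follows essentially the same route as the paper: identify $G_*$ with the value at $t=1$ of Jacobi fields along $\sigma_{\mathbf a}$, use the curvature bound \eqref{new2} and the $\nabla^2 F$/second-fundamental-form bound \eqref{new1} to control the Jacobi data (in particular \eqref{ineq2}), and then compare $\langle G_*w, G_*w\rangle$ with the product inner product by a Gronwall/Picard estimate. The paper writes the whole thing at once for a general $w=(\xi,\mathbf b)$, whereas you split into the $N$-block, mixed block, and $\mathbf a\mathbf a$-block — a cleaner bookkeeping, but not a different method. Your more careful accounting in fact corrects a small imprecision in the paper: because $Y_X'(0)=\sum a_\alpha\nabla_X v_\alpha^*=O(|\mathbf a|)$ contributes linearly to the $NN$- and mixed blocks, the genuine error is $O(|\mathbf a|)$ as the lemma asserts, not the $O(|\mathbf a|^2)$ written in the paper's final displayed inequality. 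You also rightly flag that getting $\langle v_\alpha^*,v_\beta^*\rangle=\delta_{\alpha\beta}$ requires $F$ to be (arranged to be) a Riemannian submersion at $q$, a point the paper leaves implicit.
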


\begin{proof}  Since this is an ``expected'' result, we give a sketch of the proof. If $N$ is a point, then the result of this lemma is well known: it is the closeness of a neighborhood of a manifold to its tangent space within its injectivity radius. In the general case, we need to estimate the effect of the second fundamental form of $N$ on the Jacobi fields.

We can identify the differential $G_*$ as follows: let $x=(p,{\bf a})$ with $p\in N$ and ${\bf a}\in B_r(T_qX)$. Let $w=(\xi,{\bf b})\in T_x(N\times B_r(T_qX))$ with $\xi\in T_pN$ and ${\bf b}=(b_1,\cdots, b_m)\in\R^m$ with $|{\bf b}|<r$. Denote by $\sigma_{\bf a}(t)$ the geodesic starting at $\sigma_{\bf a}(0)=(p,0)$ in the direction $\sigma'_{\bf a}(0)=\sum a_i v_i^*$. We consider the Jacobi field  $V$ along $\sigma_{\bf a}(t)$ such that
\[
V(0)=\xi,\quad V'(0)=\sum b_i v_i^*+\sum a_i\nabla_\xi v_i^*,\quad V''(t)=R(\sigma_{\bf a}'(t), V(t))\sigma_{\bf a}'(t),
\]
where $R(\cdot,\cdot)\cdot$ is the curvature tensor of $M$.
By definition, $G_*w=V(1)$. Therefore, we need to estimate $V(t)$ for $0\leq t\leq 1$.
By ~\eqref{new2} and ~\eqref{new1}, we have
\begin{equation}\label{ineq2}
|\nabla_\xi  v^*|\leq C  \,|\xi|.
\end{equation}
Using this and the upper bound on the curvature tensor,
by the Picard-Lindel\"of Theorem of ODEs, whenever ${\bf a}$ is sufficiently small, we get
\[
\bigl|\langle  G_*w,G_*w\rangle-\langle w,w\rangle_0 \bigr|=\bigl|\langle V(1), V(1)\rangle-\langle w,w\rangle_0 \bigr|\leq C \; |w|^2\cdot|{\bf a}|^2,
\]
where $\langle \cdot,\cdot\rangle_0$ is the inner product with respect to the product metric of $N\times B_r(T_qX)$.  The above inequality is equivalent to~\eqref{234}.
 This proves the lemma.
\end{proof}

The above result allows us to prove Theorem~\ref{thm1}   under some additional assumptions on the minimal sequence.

\begin{thm}\label{cor46}
Let $S=\{(x_i, R_i)\}$ be a minimal sequence such that the injectivity radius at $x_i$ tends to zero as $i\to\infty$.
Moreover, assume that there exists  a sequence of positive real numbers $\eps_i\to0$ such that
\begin{enumerate}
\item $\sqrt{\eps_i} R_i\to 1$ as $i\to \infty$;
\item the curvature of $(B_{x_i}(2\sqrt{\eps_i}R_i), x_i,\eps_i g_i, 2\sqrt{\eps_i} R_i)$ is bounded by $1$;
\item $(B_{x_i}(2R_i), x_i,\eps_i g_i, 2\sqrt{\eps_i} R_i)$  converges to a pointed  smooth manifold $(X,q)$.
\end{enumerate}
Then the essential spectrum of the Laplacian on $k$-forms is either empty, or $[\alpha_k,\infty)$ for some nonnegative number $\alpha_k$.
\end{thm}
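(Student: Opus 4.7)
The plan is to verify the hypotheses of Theorem~\ref{thmSpecGH} for the minimal sequence $\{(x_i, R_i)\}$; once done, that theorem immediately yields $\sigma_{\rm ess}(k, \Delta, M) = [\alpha_k, \infty)$ with $\alpha_k = \liminf_{i\to\infty} \alpha(N_i, m, n, k)$, or $\sigma_{\rm ess}(k, \Delta, M) = \emptyset$ if $\alpha_k = \infty$. Concretely, we must exhibit compact Riemannian manifolds $N_i$ of uniformly bounded diameter together with diffeomorphisms $f_i : B_{x_i}(R_i) \to N_i \times \R^m$ onto their image satisfying $|g - f_i^* g_i| = \Psi(i^{-1})$, where $m = \dim X$ and $g_i$ is the product metric.

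The construction is carried out in the rescaled metric $\tilde g_i := \eps_i g$. By assumptions~(2) and~(3), the rescaled balls have sectional curvature bounded by $1$ on $B_{x_i}(2\sqrt{\eps_i}R_i)$ and converge to the smooth $m$-dimensional pointed manifold $(X, q)$. The Cheeger--Fukaya--Gromov structure theorem realizes $B_{x_i}(R_i)$, in the rescaled metric, as an approximate nilpotent fiber bundle over a neighborhood of $q$ in $X$, with compact fibers $N_i$ of uniformly bounded $\tilde g_i$-diameter and bounded second fundamental form, placing us in the setting of Lemma~\ref{lem51}. Applying that lemma at $q$ yields, for each $r$ less than the positive injectivity radius of $X$ at $q$, local diffeomorphisms $G_i : N_i \times B_r(T_q X) \to (V_i, \tilde g_i)$ whose pullback of $\tilde g_i$ is $C|{\bf a}|$-close to the product $g_{N_i} \times g_{\R^m}$ by~\eqref{234}. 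Covering the rescaled ball (of radius $\sqrt{\eps_i}R_i \to 1$) by finitely many such patches and gluing with a smooth partition of unity on the $X$-factor produces a global approximate product diffeomorphism in $\tilde g_i$. Rescaling the Euclidean base by $\eps_i^{-1/2}$ --- so a $\tilde g_i$-ball of radius $r$ becomes a Euclidean ball of radius $r/\sqrt{\eps_i} \sim R_i$, as condition~(1) demands --- converts this into the diffeomorphism $f_i$ required by Theorem~\ref{thmSpecGH}. That theorem then supplies $[\alpha_k, \infty) \subset \sigma_{\rm ess}(k, \Delta, M)$, and the minimality of the sequence supplies the reverse inclusion via the identification $\alpha_k = \lambda_o^{\rm ess}(k, M)$.

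The principal technical obstacle is promoting the local product decompositions of Lemma~\ref{lem51} to a single global diffeomorphism: the local trivializations of the nilpotent bundle must be reconciled across patches (so that the glued map remains a diffeomorphism onto its image, with only $\Psi(i^{-1})$ error from the product metric), and the two relevant scales --- the fiber diameter, which must stay uniformly bounded so that Theorem~\ref{thmSpecGH} applies, and the Euclidean factor, which must span a ball of radius $\sim R_i \to \infty$ --- must be simultaneously controlled. The precise interplay encoded by condition~(1), $\sqrt{\eps_i}R_i \to 1$, is exactly what makes this reconciliation possible.
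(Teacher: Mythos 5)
Your overall strategy (reduce to Theorem~\ref{thmSpecGH} via Lemma~\ref{lem51}) is the right one, but the way you control the metric error has a genuine gap, and the paper resolves it differently than you propose.

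Lemma~\ref{lem51} gives the pointwise relative estimate $|G^*g_W-g_N\times g_{\R^m}|_{G^*g_W}\leq C|{\bf a}|$, which is scale-invariant. You apply it over the \emph{entire} rescaled ball, of radius $\sqrt{\eps_i}R_i\to 1$. At the edge of that ball $|{\bf a}|\sim 1$, so the metric error is $O(1)$, not $\Psi(i^{-1})$, and hypothesis~\eqref{thm7e1} of Theorem~\ref{thmSpecGH} fails. Equivalently: $X$ has bounded but generally nonzero curvature, so $\eps_i^{-1}g_X$ on a fixed-radius ball in $X$ does not converge to the Euclidean metric; in normal coordinates the error is $O(\eps_i|y|^2)=O(r^2)$, which is independent of $i$. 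Your attempt to fix this by covering with finitely many patches and gluing with a partition of unity cannot help: either the patches are of fixed size (and then each patch already carries an $O(1)$ error), or they shrink (and then the number of patches and the derivatives of the cutoffs blow up, introducing new uncontrolled terms).

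The paper's proof sidesteps this entirely: it does not cover the whole rescaled ball. Instead it applies Lemma~\ref{lem51} once, on a ball of \emph{shrinking} rescaled radius $a_i=\eps_i^{1/4}\to 0$, so that the error $C|{\bf a}|\leq Ca_i=\Psi(i^{-1})$. Unscaling, this ball has radius $a_i\eps_i^{-1/2}=\eps_i^{-1/4}\to\infty$ in the original metric, which is all that Theorem~\ref{thmSpecGH} needs (the balls must have radius $\to\infty$, not radius $R_i$). The decisive computation is that on the shrinking ball $B_{a_i}(q)\subset X$, in normal coordinates, $\eps_i^{-1}g_X=\delta+O(\eps_i|y|^2)=\delta+O(a_i^2)\to\delta$. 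The choice $a_i=\eps_i^{1/4}$ is exactly the balance between ``small enough that Lemma~\ref{lem51}'s error vanishes'' and ``large enough that the unscaled radius diverges.'' No gluing or covering is needed. You should drop the covering argument and instead shrink the ball to which Lemma~\ref{lem51} is applied, at an intermediate rate $a_i$ with $a_i\to0$ and $a_i/\sqrt{\eps_i}\to\infty$.
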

\begin{proof}
Let $g_X$ be the Riemannian metric of $X$. By Lemma~\ref{lem51}, by passing to a subsequence if necessary, we may choose a sequence $a_i\to 0$ and nilpotent manifolds $(N_i, g_{N_i})$ such that
\[
|\eps_i g_i-g_{N_i}\times g_X|_{\eps_i g_i}\to 0,
\]
on $(B_{x_i}(a_i), x_i,\eps_i g_i, a_i)$. In particular, we can take $a_i=\eps_i^{1/4}$, then  \[
|g_i-\eps_i^{-1}(g_{N_i}\times g_X)|_{g_i}\to 0,\quad {\rm on }\quad  (B_{x_i}(a_i\eps_i^{-1/2}), x_i,g_i, \eps_i^{-1/2}a_i).
\]

Since $\eps_i^{-1}(g_{N_i}\times g_X)=\eps_i^{-1}g_{N_i}\times \eps_i^{-1}g_X$ and since $X$ is a Riemannian manifold, $\eps_i^{-1}g_X$ is convergent to the Euclidean metric.
Thus by Theorem~\ref{thmSpecGH}, we have
\[
\sigma_{\rm ess}(k,\Delta,M)= [\alpha_k,\infty)=[\lambda_o^{\rm ess},\infty),
\]
where $\alpha_k =\liminf_{i\to\infty} \alpha(N_i,m,n,k)$  and  for the computation of $\alpha(N_i,m,n,k)$ the metric on $N_i$  is $\eps_i^{-1}g_{N_i}$.
\end{proof}

\section{The collapsing case: singular  limit} \label{S5}

In this section we will show that we can always find a minimal sequence which converges to a smooth limit space. To achieve this, we consider the orthonormal frame bundle over an adequate blow-down of the minimal sequence. Our result implies that the essential spectrum is captured by the regular set of the blow-down of the sequence.

We now fix $\epsilon_o>0$ and let $(W,p)$ be a pointed Riemannian manifold, not necessarily complete, such that
\begin{equation}\label{new3}
|Rm(W)|\leq \epsilon_o
\end{equation}
for some $\epsilon_o$   small.
We also   assume  that $W$ is relatively ``large'':
\begin{equation}\label{new3-1}
W \text{ contains a ball of radius } \sqrt{\epsilon_o^{-1}} \text{ centered at } p.
\end{equation}

Define the  $\epsilon_o$-frame bundle $F_{\epsilon_o}(W)$ over $W$ as follows: As a differentiable manifold, $F_{\epsilon_o}(W)$ is the orthonormal frame bundle over $W$. The Levi-Civita connection induces a decomposition of the tangent bundle of $F_{\epsilon_o}(W)$ into the horizontal and vertical sub-bundles. We endow  the horizontal bundle with the pull-back of the Riemannian metric $g_W$ on $W$, and endow the vertical bundle  with $\epsilon_o^{-1}$ times the standard (bi-invariant) Riemannian metric over  $O(n)$. We denote  the Riemannian metric  over $F_{\epsilon_o}(W)$ by $g_{\epsilon_o}=g_{F_{\epsilon_o}(W)}$.

By ~\cite{KS}*{Proposition 2.4}, the curvature of $F_{\epsilon_o}(W)$ satisfies
 \begin{equation}\label{new3-2}
 |Rm(F_{\epsilon_o}(W), g_{\epsilon_o}) |\leq C\epsilon_o,
 \end{equation}
 where $C$ is a constant depending only on the dimension.
Then after rescaling, we have
 \[
 |Rm(F_{\epsilon_o}(W), \epsilon_o g_{\epsilon_o})|\leq C.
 \]
We lift  $p\in W$ to a point $p^*\in F_{\epsilon_o}(W)$, and  consider  the pointed manifold
\[
(F_{\epsilon_o}(W), p^*, \epsilon_og_{\epsilon_o}).
\]
 By ~\cite{Fuk1}*{Theorem 11.1 and Theorem 12.8}, if $\epsilon_o$ is sufficiently small, then there is a pointed \emph{Riemannian manifold} $(Y,y, g_Y)$,  a metric space $X'$, and mappings
 \[
 F: F_{\epsilon_o}(W)\to Y, \quad F_o: W\to X',\quad\pi: Y\to X'
 \]
  such that
\begin{align}\label{438}
&
d_{\rm GH} \left((F_{\epsilon_o}(W), p^*, \epsilon_o g_{\epsilon_o}), (Y,y, g_Y\right))<\Psi(\epsilon_o);\\
& d_{\rm GH}((W,p,\epsilon_o g_W), (X', \pi(y)))<\Psi(\epsilon_o);
\end{align}
and  the group $O(n)$ acts on $F_{\epsilon_o}(W)$  equivariantly:
\begin{equation}\label{256}
\begin{CD}
(F_{\epsilon_o}(W),p^*,\epsilon_o g_{\epsilon_o}) @>F>>(Y,y,g_Y)\\
@VV\pi V@VV\pi V\\
(W,p,\epsilon_o g_{W})  @>F_o >> X'=Y/O(n)
\end{CD}.
\end{equation}
Moreover, the injectivity radius of $Y$ has a positive lower bound.
By~\cite{CFG}, we know that there is an almost flat manifold $N=N_{\epsilon_o}$ such that $F_{\epsilon_o}(W)$ is a fiber bundle with fiber $N$ over $Y$. Without loss of generality, we assume that $W$ is an $A$-regular manifold and hence so is  $F_{\epsilon_o}(W)$ by~\cite{KS}.

%Let $Y_1\subset (Y,\epsilon_o^{-1}g_Y)$ be a ball of radius  $r$ with $r$ being very large but $r\sqrt{\epsilon_o}$ being very small, then the pair $((F^{-1}(Y_1), g_{\epsilon_o}), (Y_1,\epsilon_o^{-1}g_Y))$   satisfy the assumptions of Theorem~\ref{thm44}. Then $(F^{-1}(Y_1)$ is a product manifold $N\times Y_1$ and  the metric $g_{\epsilon_o}$ on $F^{-1}(Y_1)$ is close to the product metric  $\epsilon_o^{-1} g_N\times \epsilon_o^{-1} g_Y$ of $N\times Y_1$.

Let $\delta>0$ be a small positive number.
In order to  use  Theorem~\ref{cor46}, we
let $W_{\delta}\subset W$ be any  ball of radius $\delta\sqrt{\epsilon_o^{-1}}$ in $W$, and define $Y_{\delta}$ to be the image of $F_{\epsilon_o}(W_{\delta})$ under $F$.   Denote by $\lambda=\lambda_o(k,W_{\delta})$ the smallest eigenvalue of the Friedrichs extension of the Laplace operator on $k$-forms over $W_{\delta}$. Let $\omega$ be an approximate $k$-eigenform for $\lambda$ such that
\begin{enumerate}
\item $\omega$ is smooth with compact support in $W_{\delta}$;
\item for a very small number $\eps$, we have
\[
\|\Delta\omega-\lambda\omega\|_{L^2(W_{\delta},g_W)}\leq\eps\|\omega\|_{L^2(W_{\delta},g_W)}.
\]

\end{enumerate}

Let $\omega^*$ be the pull-back of $\omega$ from $W_{\delta}$ to  $F_{\epsilon_o}(W_{\delta})$ and denote the Laplacian over $F_{\epsilon_o}(W_{\delta})$ by $\Delta^*$. By passing to the universal cover $\tilde W_{\delta}$ of $W_{\delta}$ we know that a cover of $F_{\epsilon_o}(W_{\delta})$ is almost the product\footnote{The universal  cover of $F_{\epsilon_o}(W)$ should in fact be $\tilde W\times Spin(n)$, but here we  obscure the fact a bit to simplify notation.  } $\tilde W_{\delta}\times O_{\epsilon_o}(n)$, where $O_{\epsilon_o}(n)$ is the Riemannian manifold $O(n)$ with the standard Riemannian metric scaled by $\epsilon_o^{-1}$. By~\eqref{new3-2}, $\tilde W_{\delta}\times O_{\epsilon_o}(n)$ is almost Euclidean, and as a result we have the pointwise estimate
\begin{equation}\label{52}
|\Delta^*\omega^*-(\Delta\omega)^*|\leq \Psi(\epsilon_o+\delta)\; \left(|(\nabla^*)^2\omega^*|+|\nabla^*\omega^*|+|\omega^*|\right),
\end{equation}
where $\nabla^*$ denotes the gradient  operator  of $(F_{\epsilon_o}(W_{\delta}), g_{\epsilon_o})$, and $(\Delta\omega)^*$ is the pull-back of $\Delta\omega$ to the frame bundle $F_{\epsilon_o}(W)$.
Since $W$ is an $A$-regular manifold, from ~\eqref{new3-2}, we have
\begin{equation}\label{57}
 |Rm(F_{\epsilon_o}(W), g_{\epsilon_o}) |+ |\nabla Rm(F_{\epsilon_o}(W), g_{\epsilon_o}) |\leq C\epsilon_o.
\end{equation}

Denote ${\mathcal H}=L^2(F_{\epsilon_o}(W_{\delta}), g_{\epsilon_o})$. By the Divergence Theorem and~\eqref{57},
\begin{equation}\label{53}
\|(\nabla^*)^2\omega^*\|^2_{{\mathcal H}}+\|\nabla^*\omega^*\|^2_{{\mathcal H}}+\|\omega^*\|^2_{{\mathcal H}}\leq C\, \left(\|\Delta^*\omega^*\|^2_{{\mathcal H}}+\|\omega^*\|^2_{{\mathcal H}}\right).
\end{equation}
Since
\[
\|(\Delta\omega)^*-\lambda\omega^*\|_{{\mathcal H}}^2={\rm Vol}(O_{\epsilon_o} (n))\cdot \|\Delta\omega-\lambda\omega\|_{L^2(W_{\delta},g_W)}^2\leq\eps^2\|\omega^*\|^2_{\mathcal H},
\]
it follows that
\begin{align*}
\|\Delta^*\omega^*-\lambda\omega^*\|_{{\mathcal H}}^2& \leq 2\|(\Delta\omega)^*-\lambda\omega^*\|_{{\mathcal H}}^2+2
\|\Delta^*\omega^*-(\Delta\omega)^*\|^2_{{\mathcal H}}\\
&\leq2\eps^2\|\omega^*\|^2_{{\mathcal H}}+ 2
\|\Delta^*\omega^*-(\Delta\omega)^*\|^2_{{\mathcal H}}.
\end{align*}
By~\eqref{52},~\eqref{53}, we get
\[
\|\Delta^*\omega^*-(\Delta\omega)^*\|^2_{{\mathcal H}}\leq\Psi(\eps+\epsilon_0+\delta\mid\lambda)\; \left(\|\Delta^*\omega^*-\lambda\omega^*\|_{{\mathcal H}}^2+\|\omega^*\|^2_{\mathcal H}\right).
\]
If $(\eps+\epsilon_o+\delta)$ is small enough,  by  using the above two inequalities, we conclude that
\begin{equation}\label{27-1}
\|\Delta^*\omega^*-\lambda\omega^*\|_{{\mathcal H}}\leq \Psi(\eps+\epsilon_o+\delta\mid \lambda)\;\|\omega^*\|_{{\mathcal H}}.
\end{equation}
In other words, the pull-back of an almost eigenform is also an almost eigenform.

We now describe the set of smooth points of $X'$.

\begin{lem}\label{34}
The set of smooth points $X'_{\rm reg}$ of $X'$ is (Zariski) dense open, and the action of $O(n)$ on the $\pi$-preimage of any smooth point is free.  Moreover, we have the following quantitative version of the lemma: let $X_{\delta}=\pi(Y_{\delta})$. Then there is a point $x'\in X_{\delta}$ such that a ball of radius $C\delta$ centered at $x'$ is contained in $X'_{\rm reg}\cap X_\delta$. The curvature of $x'\in X'_{\rm reg}$ is bounded by $C/d(x')^2$ for a constant $C$, where $d(x')$ is the distance to the singular locus $X_{\rm sing}$ of $X'$.
\end{lem}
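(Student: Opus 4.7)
The plan is to prove the lemma in three parts corresponding to (i) the openness and density of $X'_{\rm reg}$ together with freeness of the $O(n)$-action on $\pi^{-1}(X'_{\rm reg})$, (ii) the existence of the point $x'\in X_\delta$ admitting a $C\delta$-ball inside $X'_{\rm reg}$, and (iii) the curvature bound.

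For part (i), I would invoke the principal orbit theorem for the isometric action of the compact Lie group $O(n)$ on the Riemannian manifold $(Y,g_Y)$. The orbit-type stratification yields a principal orbit stratum that is open and dense in $Y$, and whose image under $\pi$ is $X'_{\rm reg}$, on which the quotient carries a smooth Riemannian structure. To identify the principal isotropy, I would use that $(F_{\epsilon_o}(W),\epsilon_o g_{\epsilon_o})$ approximates $(Y,g_Y)$ equivariantly, with $O(n)$ acting freely on the frame-bundle side. Fukaya's fibration theorem \cite{Fuk1}, in its equivariant form, exhibits $\pi^{-1}(X'_{\rm reg})\to X'_{\rm reg}$ as a principal $O(n)$-bundle matching the frame-bundle structure before collapse, so the action on $\pi^{-1}(X'_{\rm reg})$ is free.

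For part (ii), by \eqref{438}, after rescaling $(W,\epsilon_o g_W)$ is $\Psi(\epsilon_o)$-Gromov-Hausdorff close to $(X',\pi(y))$, so $X_\delta=\pi(Y_\delta)$ essentially contains a metric ball of radius comparable to $\delta$ in $X'$. Since the singular set $X_{\rm sing}$ is a finite union of strata of strictly lower Hausdorff dimension than $\dim X'$ (indeed of codimension at least one, by the slice theorem), a standard Vitali/packing argument, carried out using the smooth metric on $X'_{\rm reg}$, produces a point $x'$ in this ball whose distance to $X_{\rm sing}$ is at least $C\delta$. Any such $x'$ has a ball of radius $C\delta$ sitting inside $X'_{\rm reg}\cap X_\delta$.

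For part (iii), over $X'_{\rm reg}$ the projection $\pi$ is a Riemannian submersion whose fibers are the $O(n)$-orbits. O'Neill's formula
\[
K_{X'}(\bar X,\bar Y)=K_Y(X,Y)+\tfrac{3}{4}\,|A_X Y|^2,
\]
applied to orthonormal horizontal lifts $X,Y$ of $\bar X,\bar Y\in T_{x'}X'_{\rm reg}$, reduces the sectional curvature estimate to a bound on the O'Neill integrability tensor $A$. Since $|Rm(Y)|\leq C$ by \eqref{new3-2} (after rescaling) and the Cheeger-Fukaya-Gromov convergence, the task is to prove $|A|\leq C/d(x')$. I would derive this from the slice theorem at a nearby non-principal orbit, where the change in orbit dimension forces horizontal lifts of unit vectors to acquire a vertical derivative of order $1/d(x')$, in direct analogy with the curvature blow-up at an orbifold cone point; the estimate follows from a Jacobi-field computation in the slice. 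The main obstacle is precisely this A-tensor bound: parts (i) and (ii) are essentially standard consequences of the principal orbit theorem and dimension counting combined with the equivariant limit structure from \cite{Fuk1}, whereas the sharp $1/d(x')^2$ rate in (iii) requires the careful local slice-theoretic analysis just sketched.
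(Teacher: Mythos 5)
Your parts (i) and (ii) follow essentially the same route as the paper: freeness of the $O(n)$-action on the regular part is deduced from the equivariant fibration/product structure of Cheeger--Fukaya--Gromov (the paper shows $W_Z\cong N\times Z$, hence $F_{\epsilon_o}(W_Z)\cong F_N\times Z$, so $\pi^{-1}(Z)=O(n)\times Z$), and the existence of the point $x'$ with a $C\delta$-ball in $X'_{\rm reg}\cap X_\delta$ is obtained in both cases by a dimension-count on the singular set (a closed finite union of lower-dimensional submanifolds); the paper uses an induction on dimension after identifying $Y_\delta$ with a Euclidean ball, while you propose a Vitali/packing argument, but these are interchangeable. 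Where you genuinely diverge is part (iii). You attack the curvature estimate directly via O'Neill's submersion formula for $\pi:Y\to X'_{\rm reg}$, reducing it to a bound $|A|\le C/d(x')$ on the integrability tensor to be derived from a slice-theorem/Jacobi-field analysis near a non-principal orbit --- and you correctly identify this as the crux, leaving it as an unproven sketch. The paper instead gives a soft compactness argument: if $x_j\to x_\infty\in X'_{\rm sing}$ with $d(x_j,X'_{\rm sing})\to0$, then the rescaled pointed spaces $(X',x_\infty,d(x_j,X'_{\rm sing})^{-2}g_{X'})$ subconverge to a tangent cone at $x_\infty$, and smooth convergence on the regular part (where $x_j$ sits at unit distance from the cone point in the rescaled metric) forces $d(x_j)^2\,|Rm(x_j)|$ to stay bounded, which is exactly the claimed estimate. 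The paper's blow-up argument bypasses any explicit control of the O'Neill $A$-tensor and is shorter; your O'Neill route would, if the $|A|\le C/d(x')$ bound were actually carried out, yield a more effective and local proof of the same estimate, but as written the key step is only a plan, not a proof.
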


\begin{proof} The fact that the set of smooth points of $X'$ is dense open is well-known. We prove that the action of $O(n)$  on any  pre-image of a smooth point of $X'$ is free.

Let   $x'\in X'$ be a smooth point, and let $Z$ be an open ball centered at $x'$  contained in $X'_{\rm reg}$.   Let $W_Z=F_o^{-1}(Z)$.
By shrinking the ball if necessary, we can assume that $W_Z$ is diffeomorphic to $N\times Z$ (cf. ~\cite{CFG}).
Let $F_N$ be the frame bundle of $W$ restricted to $N$. Then $F_{\epsilon_o}(W_Z)$ is diffeomorphic to $F_N\times Z$. Thus as $N$ collapses to a point, we get $\pi^{-1}(Z)=O(n)\times Z$ and the action of $O(n)$ at $x'$ is free.

Now we prove the quantitative version of the lemma. Since $W_{\delta}$ is a ball of radius $\delta\sqrt{\epsilon_o^{-1}}$ and the injectivity radius of $Y$ has a lower bound, by \eqref{438}, without loss of generality, we conclude that the injectivity  radius of $Y_{\delta}=F_{\epsilon_o}(W_{\delta})$ is at least  $\delta$. Therefore,  replacing $Y_\delta$ by the ball of radius $\delta$ if necessary,  we can identify $Y_\delta$ with a small ball of the Euclidean space. Since $X'_{\rm sing}$ is the image, under $\pi$, of the points of $Y_\delta$ on which the action of $O(n)$ is not free, then it is the union of finitely many submanifolds and  it  is closed.
Using  mathematical induction, there exists a point $x'$ such that the ball of radius $C\delta$ (with $C<1$) does not intersect  $X'_{\rm sing}$.

The curvature estimate follows from compactness. Let $x_j\in X'\backslash X'_{\rm sing}$ such that \[d(x_j,X'_{\rm sing})\to 0.\] Assume that $x_j\to x_\infty\in X'_{\rm sing}$. Then, since $(X', x_\infty, d(x_j,X'_{\rm sing})^{-2}g_{X'})$  converges to a tangent cone at $x_\infty$, the curvature estimate follows.
\end{proof}

We shall use the above lemma to study the approximate $k$-eigenform $\omega$, and  its lift  $\omega^*$.
 By Lemma~\ref{lem51},   we know that  $F^{-1}(Y_\delta)$ is diffeomorphic  to $N\times Y_\delta$,  if $\delta$ is sufficiently small,  and we can write
\[
\omega^*=\sum_{s=0}^k  \alpha^s\wedge\beta^{k-s},
\]
where $\alpha^s$ is an $s$-form on $(N,\epsilon_o^{-1}g_N)$,  and $\beta^{k-s}$ is a  $(k-s)$-form on $(Y_\delta, \epsilon_o^{-1} g_Y)$.

Assume that the dimension of $X'$ is $m$.  Then
the dimension of $N$ is $n-m$. We write
\begin{equation}\label{27}
\omega^*=\sum_{k\geq s\geq k-m} \alpha^s\wedge\beta^{k-s}+\sum_{s<k-m} \alpha^s\wedge\beta^{k-s}=\omega_{1}^*+\omega_{2}^*.
\end{equation}

We claim $\omega_2^*\equiv 0$. In order to prove this we will show that $\beta^{k-s}\equiv 0$ for all $s<k-m$. By continuity, it suffices to show that at any point $y'\in Y$ where $\pi(y')$ is smooth, $\beta^{k-s}(y')=0$. To see this, we let $Z$ be a small ball in $X'_{\rm reg}$ centered at $\pi(y')$. Using the notation in Lemma~\ref{34}, we know that if the radius of the ball is sufficiently small, then $W_Z=F_o^{-1}(Z)$ is diffeomorphic to $N\times Z$. In  this case, the commutative diagram~\eqref{256} is reduced to
\[
\begin{CD}
F_N\times Z@>F>> O(n)\times Z\\
@VV\pi V@VV\pi V\\
N\times Z @>F_o >> Z
\end{CD}.
\]
Via the diffeomorphism of  $W_Z$ to $N\times Z$, we can decompose the $k$-form $\omega$   as
\[
\omega=\sum_{k-s\leq m} \xi^s\wedge\eta^{k-s}
\]
where $\xi^s$ is an $s$-form on $N$ and $\eta^{k-s}$ is a $(k-s)$-form on $Z$. We then have
\[
\omega^*=\sum_{k-s\leq m} \pi^*(\xi^s)\wedge\eta^{k-s}.
\]

Comparing the above expression of $\omega^*$ with that in ~\eqref{27}, we   get $\beta^{k-s}\equiv0$ on $\pi^{-1}(Z)$ for $k-m>s$.
Hence
 $\omega_2^*\equiv 0$, and we therefore can write
 \begin{equation}\label{27-2}
 \omega^*=\sum_{k\geq s\geq k-m} \alpha^s\wedge\beta^{k-s}.
 \end{equation}

From ~\eqref{27-1} and assumption ~\eqref{new3} for the curvature tensor, we have
\[
\lambda=\lambda_o(k,W_\delta)\geq \frac{\|\nabla^*\omega^*\|^2_{\mathcal H}}{\|\omega^*\|_{\mathcal H}^2}-\Psi(\eps+\epsilon_o+\delta).
\]
Using ~\eqref{27-2}, together with Lemma~\ref{lem51},
we have
\[
\frac{\|\nabla^*\omega^*\|^2_{\mathcal H}}{\|\omega^*\|_{\mathcal H}^2}\geq \min_{k\geq s\geq k-m}\left[ \frac{\|\nabla_N \alpha^s\|^2_{L^2(N,\epsilon_o^{-1}g_N)}}{\|\alpha^s\|_{L^2(N,\epsilon_o^{-1}g_N)}^2}+\frac{\|\nabla_Y \beta^{k-s}\|^2_{L^2(Y_{\delta},\epsilon_o^{-1}g_Y)}}{\|\beta^{k-s}\|_{L^2(Y_{\delta},\epsilon_o^{-1}g_Y)}^2}\right]-\Psi(\epsilon_o+\delta).
\]
Eliminating the Rayleigh quotient for $Y$,   we get
\[
\frac{\|\nabla^*\omega^*\|^2_{\mathcal H}}{\|\omega^*\|_{\mathcal H}^2}\geq \min_{s\geq k-m}\lambda_o(s, (N, \epsilon_o^{-1} g_N))-\Psi(\epsilon_o+\delta\mid\lambda).
\]
We can verify that
\[
\alpha((N,\epsilon_o^{-1}g_N), m,n,k)=\min_{k\geq s\geq k-m}\lambda_o(s,(N, \epsilon_o^{-1} g_N)),
\]
noting that for $k<m,$ $\lambda_o(0,(N, \epsilon_o^{-1} g_N))=0$.  Thus we get
\begin{equation}\label{2c2}
\lambda\geq \alpha((N,\epsilon_o^{-1}g_N), m,n,k)-\Psi(\eps+\epsilon_o+\delta\mid\lambda).
\end{equation}

Now we can prove the main result of this section  which, as we will see in the next section, allows us to replace the minimal sequence by one that collapses to a smooth manifold.

\begin{thm}\label{thm522}
Let $(W,p)$ be a pointed Riemannian manifold which satisfies ~\eqref{new3},~\eqref{new3-1}. Then for any $\delta>0$  small enough, there is an open subset $W'\subset W$ such that
\begin{enumerate}
\item $F_o(W')\subset X'_{\rm reg}$;
\item $W'$ is  a ball of radius $C\delta\sqrt{\epsilon_o^{-1}}$ for some constant $C$;
\item $\lambda_o(k, W')\leq \lambda_o(k, W)+\Psi(\epsilon_o+\delta\mid \lambda_o(k, W))$.
\end{enumerate}
\end{thm}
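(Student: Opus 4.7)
The plan is to prove Theorem~\ref{thm522} by combining two complementary bounds on the number $\alpha:=\alpha((N,\epsilon_o^{-1}g_N),m,n,k)$. First, inequality \eqref{2c2} already asserts $\alpha\leq\lambda_o(k,B)+\Psi$ for \emph{every} ball $B$ of radius $\delta\sqrt{\epsilon_o^{-1}}$ in $W$. Second, I will construct an explicit test form on a carefully chosen ball $W'$ lying over the regular stratum $X'_{\rm reg}$ to obtain the matching inequality $\lambda_o(k,W')\leq\alpha+\Psi$. A standard Gromov covering argument will link $\lambda_o(k,W)$ to the quantities $\lambda_o(k,B)$ and close the loop.

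To produce $W'$ and verify $(1)$--$(2)$, I apply the quantitative part of Lemma~\ref{34} to find a point $x'\in X'$ whose $C\delta$-neighborhood in the rescaled metric of $X'$ lies inside $X'_{\rm reg}$. Lift $x'$ to a point $p'\in F_o^{-1}(x')\subset W$ and set $W':=B_{p'}(C'\delta\sqrt{\epsilon_o^{-1}})$ for a slightly smaller constant $C'<C$. Since \eqref{438} gives $d_{\rm GH}((W,p,\epsilon_o g_W),(X',\pi(y)))=\Psi(\epsilon_o)$, the map $F_o$ is almost distance-preserving at scale $\sqrt{\epsilon_o^{-1}}$, so $F_o(W')\subset X'_{\rm reg}$, verifying $(1)$; condition $(2)$ holds by construction.

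For the upper bound $\lambda_o(k,W')\leq\alpha+\Psi$, the collapsing structure trivializes over $X'_{\rm reg}$: by the diagram \eqref{256} together with Lemma~\ref{lem51}, one has an almost-product identification $W'\cong N\times Z$, where $Z$ is a ball of metric radius $\sim\delta\sqrt{\epsilon_o^{-1}}$ and the metric is product up to an error $\Psi(\epsilon_o+\delta)$. Choose $s_0\in[\max(0,k-m),\min(k,n-m)]$ realizing the minimum in the definition of $\alpha$ (taking $s_0=0$ in the trivial case $\alpha=0$), and let $\phi$ be an $L^2$-normalized $s_0$-eigenform of the Hodge Laplacian on $(N,\epsilon_o^{-1}g_N)$ with eigenvalue $\lambda_o(s_0,(N,\epsilon_o^{-1}g_N))=\alpha$. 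Take on $Z$ a smooth compactly supported cut-off $\rho$ with $\|d\rho\|_{L^2}^2/\|\rho\|_{L^2}^2=\Psi(\delta)$ (feasible because $Z$ is large in $g_W$), and build the test $k$-form $\omega':=\phi\wedge\rho\,dx^{i_1}\wedge\cdots\wedge dx^{i_{k-s_0}}$ in Euclidean coordinates on $Z$. The almost-product structure forces $(\Delta\omega',\omega')\leq(\alpha+\Psi(\epsilon_o+\delta))\|\omega'\|^2$, whence $\lambda_o(k,W')\leq\alpha+\Psi$. For the companion lower bound, cover $W$ by a Gromov family $\{B_{y_j}(\delta\sqrt{\epsilon_o^{-1}})\}$ of bounded multiplicity with partition of unity $\{\rho_j^2\}$ satisfying $|\nabla\rho_j|^2\leq C\epsilon_o\delta^{-2}$, and repeat the partition-of-unity argument of Lemma~\ref{lem7b} on an approximate eigenform of $\lambda_o(k,W)$: this yields some index $j_0$ with $\lambda_o(k,B_{y_{j_0}}(\delta\sqrt{\epsilon_o^{-1}}))\leq\lambda_o(k,W)+\Psi$. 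Feeding this into \eqref{2c2} applied to that ball gives $\alpha\leq\lambda_o(k,W)+\Psi$, which combined with the upper bound yields $(3)$.

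The main obstacle is the Laplacian computation on the almost-product piece $W'\cong N\times Z$: by Lemma~\ref{lem51} the metric is product only up to a $C|\mathbf{a}|$ error, and the Hodge Laplacian on $k$-forms involves connection, curvature, and second-fundamental-form contributions from the Riemannian submersion $N\hookrightarrow W'\to Z$. One must verify that after integration against $\omega'$ all these error terms contribute only $\Psi(\epsilon_o+\delta)$ to the Rayleigh quotient, while the leading term faithfully reproduces $\alpha=\lambda_o(s_0,(N,\epsilon_o^{-1}g_N))$. The rescalings between $g_N$ and $\epsilon_o^{-1}g_N$, and between $g_W$ and $\epsilon_o g_W$, must be tracked throughout so that the $\Psi$-dependence emerges at the claimed level.
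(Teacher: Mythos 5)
Your proof follows the paper's overall strategy closely: use the quantitative part of Lemma~\ref{34} to find a ball $W'$ lying over $X'_{\rm reg}$, establish $\lambda_o(k,W')\leq\alpha+\Psi$ via an almost-product identification of $W'$, and then couple this with inequality~\eqref{2c2} to close the argument. Where the paper simply invokes Theorem~\ref{thmSpecGH} to conclude $\lambda_o(k,W')\leq\alpha+\Psi$, you build an explicit test form $\phi\wedge\rho\,dx^{i_1}\wedge\cdots\wedge dx^{i_{k-s_0}}$; this is essentially unpacking what Theorem~\ref{thmSpecGH} does internally, so it is a valid and more self-contained substitute, provided the error analysis on the almost-product piece is carried out (as you flag at the end). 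The one genuine structural addition is your Gromov-covering step: inequality~\eqref{2c2} as derived is a statement about a ball $W_\delta$ of radius $\delta\sqrt{\epsilon_o^{-1}}$, and Friedrichs-eigenvalue monotonicity goes the wrong way ($\lambda_o(k,W_\delta)\geq\lambda_o(k,W)$ for $W_\delta\subset W$), so one cannot directly read off $\alpha\leq\lambda_o(k,W)+\Psi$ from~\eqref{2c2}. The paper passes over this silently, whereas you insert a partition-of-unity argument (as in Lemma~\ref{lem7b}) to produce a small ball $B_{y_{j_0}}$ with $\lambda_o(k,B_{y_{j_0}})\leq\lambda_o(k,W)+\Psi$ and then apply~\eqref{2c2} to that ball; this also controls the eigenvalue $\lambda$ that appears inside $\Psi(\cdot\mid\lambda)$ in~\eqref{2c2}. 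That step is a correct and useful clarification.

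Two small cautions. First, your claim that the cut-off on $Z$ satisfies $\|d\rho\|_{L^2}^2/\|\rho\|_{L^2}^2=\Psi(\delta)$ is imprecise: $Z$ has radius comparable to $\delta\sqrt{\epsilon_o^{-1}}$ in $g_W$, so the cut-off contributes a Rayleigh quotient on the order of $\epsilon_o/\delta^2$, not a quantity controlled purely by $\delta$. The same $\epsilon_o/\delta^2$ error appears in your Gromov-covering step via $|\nabla\rho_j|^2\leq C\epsilon_o\delta^{-2}$. Both therefore require $\epsilon_o\ll\delta^2$, which is compatible with how the theorem is applied in Section~\ref{S6} (where $\delta_i=\eps_i^{1/4}$) but is slightly stronger than the bare $\Psi(\epsilon_o+\delta\mid\cdot)$ in the statement; you should make this dependence explicit. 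Second, you take $W'$ to be a metric ball $B_{p'}(C'\delta\sqrt{\epsilon_o^{-1}})$ rather than the preimage $W_Z=F_o^{-1}(Z)$ used in the paper; this actually serves condition $(2)$ better (the preimage need not literally be a ball), and the two sets agree up to a $\Psi(\epsilon_o)$-error by~\eqref{438}, so this is fine, but it is worth noting that the almost-product identification of Lemma~\ref{lem51} is phrased in terms of $W_Z$ and must be transported to the ball.
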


\begin{proof}
We take a ball $Z\subset X'_{\rm reg}\cap X_\delta$ of radius $C\delta\sqrt{\epsilon_o^{-1}}$, and define
$W'=W_Z$. Then, by  Lemma~\ref{34} $W'$ is almost a product $N\times Z$ with the metric $\epsilon_o^{-1} g_N\times \epsilon^{-1}_o g_{X'}$.

Let $d(x')$ be the distance of a point $x'\in X'$ to the singular locus $X'_{\rm sing}=X'\backslash X'_{\rm reg}$. Then the curvature of $X'$ at $x'$ is bounded by $C/(d(x'))^2$.  If we shrink the radius of $Z$ by half, then the curvature of $X'$ on $Z$, with respect to the metric $\epsilon_o^{-1} g_{X'}$, is bounded by $C\delta^{-2}\epsilon_o$.  Therefore, for a fixed $\delta$, if $\epsilon_o=\epsilon_o(\delta)$ is small enough, then the curvature goes to zero.
By Theorem~\ref{thmSpecGH}, we have
\[
\lambda_o(k,W')\leq \alpha((N,\epsilon_o^{-1}g_N), m,n,k)+\Psi(\epsilon_o(\delta)+\delta)=\alpha((N,\epsilon_o^{-1}g_N), m,n,k)+\Psi(\delta).
\]
By ~\eqref{2c2}, we have
\[
\lambda_o(k,W')=\alpha((N,\epsilon_o^{-1}g_N), m,n,k)+\Psi(\delta)\leq \lambda_o(k, W)+ \Psi(\delta\mid \lambda_o(k, W)).
\]
This completes the proof of the theorem.
\end{proof}

\section{Proof of Theorem~\ref{thm1}} \label{S6}
In this section, we prove Theorem~\ref{thm1}. Let $\eps_i\to 0$, and $R_i>C \eps_i^{-2}$. By Proposition~\ref{prop26}, there exists a minimal sequence $S=\{(x_i, R_i)\}$ such that the balls $M_i=B_{x_i}( 2 R_i)$ are disjoint and the curvature of $M_i=B_{x_i}(R_i)$ is bounded by $\eps_i$.  Let $I(x_i)$ be the injectivity radius at $x_i$.

\begin{proof}[Proof of Theorem \ref{thm1}]  First, we assume that $\liminf I(x_i)>\nu$ for some $\nu>0$.  By \cite{Fuk1}*{Theorem 6.7} (see also  \cite{Gr1}*{(8.20)}),  whenever we have a convergent sequence of pointed manifolds $(M_i,x_i)$ with injectivity radius uniformly bounded below and bounded curvature, the limit space $(X,p)$ is a smooth manifold with a $\mathcal C^{1,\alpha}$-metric tensor.  Since we are able to assume that the  $M_i$ are $A$-regular,  the convergence is in the ${\mathcal C}^\infty$-topology, and the limit space $(X,p)$ is a smooth manifold.

In fact, the limit space $(X,p)$ is a flat and complete noncompact manifold.
As a result, there are mappings $F_i: B_{x_i}(R_i)\to X$ which are diffeomorhisms of  the  balls $B_{x_i}(R_i)$  onto their images. Moreover  the  pullback metrics  $(F_i^{-1})^*g_{M_i}$ are  convergent to the flat metric of $X$ in the $\mathcal C^\infty$-topology.

As we saw in Theorem~\ref{thmF2}, the essential spectrum of $X$ is a connected interval. Therefore, for any $\lambda\in\sigma_\mathrm{ess}(k,\Delta, X)$, working as in the proof of Theorem \ref{thmSpecGH}, we can use the pull-backs of  the approximate eigenforms on $X$ to $M$ and conclude that the essential spectrum of $M$ is  a connected interval as well.

Note that in the case $I(x_i) \to \infty$ the limit space $X$ is the Euclidean space $\mathbb{R}^n$. It follows that in this case the essential spectrum on $k$-forms is $[0,\infty)$ for all $k$.

We now assume that $I(x_i)\to 0$.  This is the most complicated case of the two, and requires full use of the results from the previous two sections.  Let $\delta_i=(\eps_i)^{1/4}$.
 By Theorem~\ref{thm522}, we can find another sequence $M_i'\subset M_i$, and $F_i: M_i\to X'$ such that
\begin{enumerate}
\item $F_i(M_i')\subset X'_{\rm reg}$;
\item $M_i'$ is a ball of radius $R_i'=C(\eps_i)^{-1/4}$;
\item $\lambda_o(k, M_i')\leq \lambda_o(k, M_i)+\Psi(\eps_i\mid\lambda_o(k, M_i))$.
\end{enumerate}
Without loss of generality, we may assume that $M_i'=B_{y_i}(R_i')$, where $y_i\in M_i$ and $R_i'\to\infty$. Note that the $M_i'$ are also disjoint.
Then the 3rd condition above  implies that   $S'=\{(y_i, R_i')\}$ is a minimal sequence. Since $S'$  satisfies the conditions of Theorem~\ref{cor46}, we conclude  that the essential spectrum is either empty or a connected interval.
\end{proof}

Let $\tilde \Delta$ denote the covariant Laplacian on $k$-forms.  By the Weitzenb\"ock formula and  asymptotic flatness, we have

\begin{corl}
Let $(M^n,g)$ be a complete noncompact Riemannian manifold which is asymptotically flat.  Then for $0\leq k\leq n$,  $\sigma_\mathrm{ess}(k,\tilde \Delta, M)=\sigma_\mathrm{ess}(k,  \Delta, M)$  and is therefore either empty or $[\alpha_k,\infty)$ for a nonnegative number $\alpha_k$, in other words it is computable.
\end{corl}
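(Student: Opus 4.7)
The plan is to combine the Weitzenb\"ock identity with Theorem~\ref{thm1}. Recall that on $k$-forms the Hodge and covariant Laplacians differ by a zero-order symmetric bundle endomorphism of $\Lambda^k(M)$ built pointwise from the Riemann curvature tensor:
\[
\Delta = \tilde\Delta + R_k.
\]
Because $(M,g)$ is asymptotically flat in the sense of Definition~\ref{DefAF}, the pointwise operator norm of $R_k(x)$ tends to zero as $x\to\infty$: for every $\delta>0$ there is a compact set $K_\delta\subset M$ with $|R_k(x)|\le\delta$ for every $x\in M\setminus K_\delta$. The task therefore reduces to showing that this vanishing-at-infinity perturbation leaves the essential spectrum invariant, since Theorem~\ref{thm1} then yields the computability of $\sigma_\mathrm{ess}(k,\tilde\Delta,M)$.

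To establish $\sigma_\mathrm{ess}(k,\tilde\Delta,M)=\sigma_\mathrm{ess}(k,\Delta,M)$ I would use the Weyl criterion. Let $\lambda\in\sigma_\mathrm{ess}(k,\Delta,M)$ and pick an orthonormal Weyl sequence $\{\omega_j\}\subset L^2(\Lambda^k(M))$ with $\|(\Delta-\lambda)\omega_j\|_{L^2}\to 0$. A standard cut-off argument using an exhaustion by compacts and the fact that $\omega_j\rightharpoonup 0$ weakly allows one to replace $\{\omega_j\}$ by a new orthonormal Weyl sequence whose supports leave every compact set; the error introduced by the cut-off is controlled because $\Delta$ is a local second order differential operator and $\omega_j$ is weakly null. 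Given $\delta>0$, for $j$ large enough this new $\omega_j$ is supported in $M\setminus K_\delta$, so $\|R_k\omega_j\|_{L^2}\le\delta\|\omega_j\|_{L^2}=\delta$. A diagonal extraction along $\delta\to 0$ then yields $\|(\tilde\Delta-\lambda)\omega_j\|_{L^2}\to 0$, proving $\lambda\in\sigma_\mathrm{ess}(k,\tilde\Delta,M)$. The reverse inclusion is symmetric, since $R_k$ plays an identical role in the two formulas $\Delta=\tilde\Delta+R_k$ and $\tilde\Delta=\Delta-R_k$ and its operator norm still decays at infinity.

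Having matched the essential spectra, an application of Theorem~\ref{thm1} immediately produces the desired dichotomy for $\sigma_\mathrm{ess}(k,\tilde\Delta,M)$. The only technical point that requires care is the localization of the Weyl sequence outside every compact set, but this is exactly the kind of construction used throughout the paper (and in the generalized Weyl criterion of the Appendix), so no genuine obstacle should arise; asymptotic flatness supplies all the decay that is needed.
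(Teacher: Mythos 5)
Your proof is correct and takes the route the paper implicitly intends: the text merely asserts that the corollary follows ``by the Weitzenb\"ock formula and asymptotic flatness,'' and your Weyl-sequence argument is the natural way to fill that in, with Theorem~\ref{thm1} supplying the dichotomy once the two essential spectra are identified.

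The one step that deserves more care is the localization of the Weyl sequence. Weak $L^2$ nullity of $\omega_j$ alone does not control the cut-off commutator $\|[\Delta,\chi]\omega_j\|_{L^2}$, which involves $\nabla\omega_j$ on the compact support of $\nabla\chi$; a weakly null sequence can concentrate and oscillate on a fixed compact set. What you actually need is that $\|\Delta\omega_j\|_{L^2}$ is uniformly bounded (immediate from $\|(\Delta-\lambda)\omega_j\|\to 0$ and $\|\omega_j\|=1$), so that by interior elliptic regularity $\omega_j$ is bounded in $H^2_{\mathrm{loc}}$; Rellich compactness then upgrades $\omega_j\rightharpoonup 0$ to strong convergence $\omega_j\to 0$ in $H^1_{\mathrm{loc}}$, which kills both the commutator error and the local contribution to $\|R_k\omega_j\|$. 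In fact you can dispense with the cut-off entirely: with $\omega_j\to 0$ in $L^2_{\mathrm{loc}}$, write
\[
\|R_k\omega_j\|_{L^2}^2=\int_{K_\delta}|R_k\omega_j|^2+\int_{M\setminus K_\delta}|R_k\omega_j|^2
\leq \sup_M|R_k|^2\int_{K_\delta}|\omega_j|^2+\delta^2,
\]
let $j\to\infty$ and then $\delta\to 0$. This is the standard proof that a bounded zero-order term vanishing at infinity is a relatively compact perturbation, and it shows $\sigma_{\mathrm{ess}}(k,\tilde\Delta,M)=\sigma_{\mathrm{ess}}(k,\Delta,M)$ directly, with the reverse inclusion by the same estimate applied to $\tilde\Delta=\Delta-R_k$ as you note.
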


\section{The $k$-form Spectrum over Manifolds with Asymptotically Nonnegative Ricci Curvature}\label{10}

In this final section we prove Theorem \ref{thmRic} and Corollary \ref{cor14}. We first give the following definition for   the  dimension of a manifold at infinity  following Cheeger and Colding.

\begin{Def}[Dimension at infinity]\label{77} Suppose that $M$ has asymptotically nonnegative Ricci curvature along a sequence of expanding balls as in Definition \ref{76}. We consider any positive sequence $\eps_i\to 0$ such that $\sqrt{\eps_i} R_i\to\infty$, but $\eps_i^{-1}\delta_i\to 0$. Then, after possibly passing to a subsequence, $(B_{x_i}(R_i), x_i, \eps_i g, \sqrt{\eps_i} R_i)$
is convergent in the pointed Gromov-Hausdorff sense to a metric  space  $(X,p)$.   We define the supremum of all the dimensions of the possible limit spaces as the dimension at infinity of $M$, and we denote it by $\dim_\infty M$.
\end{Def}
If the Ricci curvature of manifold is nonnegative, then $\dim_\infty M$ is the dimension of the cone of the manifold at infinity. The following  fact is straightforward based on the results in~\cite{CCoIII}.

\begin{lem} \label{lemGH}
Under the assumptions of Theorem~\ref{thmRic},
there exists a sequence $M_i=B_{x_i}(R_i)$ and $p_i\in M_i$, a  sequence $\eps_i\to 0$, and a sequence $R_i'\to\infty$ such that
\[
d_{\rm GH}\left((M_i, p_i, \eps_i g, R_i'),(\mathbb{R}^q, 0, g_E, \infty)\right) \to 0
\]
as $i\to \infty$,   where $q\geq 1$ is the dimension of $M$ at infinity, and $g_E$ is the standard Euclidean metric of $\R^q$.
\end{lem}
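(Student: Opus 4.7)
The strategy is to produce the desired sequence by composing two blow-ups: first, a blow-down of $M$ to a (generally singular) Ricci-limit space $X$ of Hausdorff dimension $q$, and then a blow-up of $X$ at a regular point, where the tangent cone is Euclidean $\R^q$.

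First, I would apply Definition \ref{77} directly. Since $\dim_\infty M = q$, one may choose $\eps_i \to 0$ with $\sqrt{\eps_i} R_i \to \infty$ and $\eps_i^{-1}\delta_i \to 0$ such that, after passing to a subsequence,
\[
(B_{x_i}(R_i), x_i, \eps_i g, \sqrt{\eps_i} R_i) \longrightarrow (X, p)
\]
in the pointed Gromov--Hausdorff sense, with $\dim X = q$. Here the rescaled lower Ricci bound satisfies $\mathrm{Ric}_{\eps_i g} \geq -\eps_i^{-1}\delta_i \to 0$, so $X$ belongs to the Cheeger--Colding class of Ricci-limit spaces with generalized nonnegative Ricci curvature.

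Next, I would invoke the Cheeger--Colding structure theory, together with the Colding--Naber constant-dimension theorem for Ricci-limit spaces, to conclude that the regular set $\mathcal{R}(X)$ has full measure and that at a.e. point $y \in \mathcal{R}(X)$ the (unique) tangent cone equals $\R^q$; the equality of the tangent-cone dimension and the Hausdorff dimension of $X$ is precisely the content of Colding--Naber. Fix such a $y$, and choose $r_j \to 0$ such that the rescaled pointed spaces $(X, y, r_j^{-1} d_X, \infty)$ converge to $(\R^q, 0, g_E, \infty)$ in the pointed Gromov--Hausdorff sense.

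Finally, to extract a single sequence indexed by $i$, I would perform the standard diagonal argument. Pick $p_i \in M_i$ whose image under the $\Psi(i^{-1})$-approximation $(M_i, x_i, \eps_i g) \to (X, p, d_X)$ converges to $y$, let $\tilde\eps_i = \eps_i \, r_{j(i)}^{-2}$ for $j(i)\to\infty$ chosen slowly, and select $R_i' \to \infty$ with $r_{j(i)} R_i' \to 0$ so that the ball $B_{p_i}(r_{j(i)} R_i')$ in $(M_i, \eps_i g)$ still lies in the region where the first approximation is effective. Then by the triangle inequality for Gromov--Hausdorff distance, $(M_i, p_i, \tilde\eps_i g, R_i')$ converges to $(\R^q, 0, g_E, \infty)$, which is the required statement. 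The main obstacle is this last calibration: one must arrange $j(i)$ and $R_i'$ so that \emph{both} the large-scale error from $(M_i, \eps_i g) \to (X, d_X)$ and the small-scale error from $(X, y, r_j^{-1} d_X) \to (\R^q, 0, g_E)$ tend to zero on balls whose rescaled radius tends to infinity; this is routine once the correct rate is written down, and the hard analytic input is entirely contained in the Cheeger--Colding--Naber structure theory used in the second step.
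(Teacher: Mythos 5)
Your proof is essentially correct and supplies exactly the argument the paper leaves implicit: the paper offers no proof of Lemma~\ref{lemGH} beyond the one-line remark that it is ``straightforward based on the results in~\cite{CCoIII},'' and your two-scale construction --- blow down to a Ricci-limit space $X$ using Definition~\ref{77}, locate a regular point $y$ whose tangent cone is $\R^q$ via Cheeger--Colding structure theory, then blow up at $y$ and diagonalize --- is the natural and correct way to make that assertion precise. Your diagonalization is set up properly: after choosing $p_i$ mapping close to $y$ under the GH approximation, rescaling by $r_{j(i)}^{-2}$ amplifies the GH error by $r_{j(i)}^{-1}$, so taking $j(i)\to\infty$ slowly and $R_i'\to\infty$ with $r_{j(i)}R_i'\to 0$ makes both error sources vanish simultaneously, and one can check that the composite rescaling still satisfies $\tilde\eps_i\to 0$, $\sqrt{\tilde\eps_i}R_i\to\infty$, and $\tilde\eps_i^{-1}\delta_i\to 0$.

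Two small caveats, neither a genuine gap but worth flagging. First, your phrase ``the equality of the tangent-cone dimension and the Hausdorff dimension of $X$ is precisely the content of Colding--Naber'' overstates that theorem: Colding--Naber establishes uniqueness of the \emph{rectifiable} (essential) dimension, i.e.\ that $\mathcal{R}_k$ carries full renormalized measure for a unique $k$; the identification of this $k$ with the ``dimension'' appearing in Definition~\ref{77} is a modeling choice that the paper itself leaves informal, so you are inheriting the paper's imprecision rather than introducing a new error. Second, you implicitly use that the supremum in Definition~\ref{77} is attained by some limit $X$; this is fine because the relevant dimension is an integer bounded by $n$, but it deserves a sentence. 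The paper also asserts $q\ge 1$, which in your framework follows because $\sqrt{\eps_i}R_i\to\infty$ forces $X$ to be unbounded.
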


The above lemma allows us to find $L^2$ approximate eigenfunctions for every  spectral point $\lambda \in[0,\infty)$  for manifolds with asymptotically nonnegative Ricci curvature. This was not previously possible, even for manifolds with Ricci curvature nonnegative. It also provides a novel proof of the same result due to the second author and D. Zhou~\cite{Lu-Zhou_2011}   without resorting to Sturm's $L^p$ independence result \cite{sturm}.

\begin{lem} \label{lemGH2}
Let $(M,g)$  be a complete noncompact Riemannian manifold. Assume that on a sequence of geodesic balls $M_i=B_{x_i}(R_i)$ with $R_i\to \infty$ there exist $\delta_i \to 0$ such that $\mathrm{Ric}_{M_i} \geq - \delta_i.$  Then for each $\lambda\geq 0$ and each $i$ there exists a smooth function  $\phi_i$ whose support  lies    in $M_i$ such that
\[
\|(\Delta - \lambda) \phi_i\|_{L^2(M)}  \leq \Psi(i^{-1}) \|\phi_i\|_{L^2(M)} .
\]
As a result, the spectrum of the  Laplacian on functions over $M$ is $[0,\infty)$.
\end{lem}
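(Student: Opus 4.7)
The plan is to construct, for each $\lambda\ge 0$ and each large $i$, a test function $\phi_i$ compactly supported in $M_i$ that is an approximate $\lambda$-eigenfunction of $\Delta$. Weyl's criterion for the self-adjoint Friedrichs extension of $\Delta$ then yields $[0,\infty)\subseteq\sigma(\Delta,M)$; the opposite inclusion is automatic since $\Delta\ge 0$.

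First I would rescale as in Lemma~\ref{lemGH} and Definition~\ref{77}: choose $\eps_i\to 0$ with $\eps_i^{-1}\delta_i\to 0$ and $\sqrt{\eps_i}\,R_i\to\infty$ (for instance $\eps_i=\max\{\sqrt{\delta_i},R_i^{-1}\}$). On the rescaled pointed manifolds $(M_i,x_i,\eps_i g)$ the Ricci curvature is bounded below by $-\eps_i^{-1}\delta_i\to 0$ on a ball of radius $\sqrt{\eps_i}R_i\to\infty$. Picking two nearly antipodal boundary points of this large rescaled ball and joining them through $x_i$ by an almost-minimizing geodesic, Colding's excess estimate produces an almost-line through $x_i$ of length going to infinity and excess going to zero. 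The Cheeger-Colding almost-splitting theorem then yields, on a sub-ball $B^{\eps_i g}_{x_i}(r_i)$ with $r_i\to\infty$, a harmonic function $b_i^*$ with
\[
\frac{1}{\mathrm{Vol}_{\eps_i g}\!\bigl(B^{\eps_i g}_{x_i}(r_i)\bigr)}\int_{B^{\eps_i g}_{x_i}(r_i)}\!\Bigl(\bigl||\nabla b_i^*|^2-1\bigr|^2+|\nabla^2 b_i^*|^2\Bigr)\,d\mathrm{vol}_{\eps_i g}=\Psi(i^{-1}).
\]

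Next I would unscale by setting $\tilde b_i=b_i^*/\sqrt{\eps_i}$, viewed as a function on $B^g_{x_i}(r_i/\sqrt{\eps_i})\subset M_i$. A direct computation gives $\Delta_g\tilde b_i=0$, while $|\nabla_g\tilde b_i|_g^2$ is close to $1$ in the same $L^2$-averaged sense; moreover, the range of $\tilde b_i$ has length $\to\infty$. Let $\chi_i$ be a smooth cutoff equal to one on $B^g_{x_i}(r_i/(2\sqrt{\eps_i}))$ and supported in $B^g_{x_i}(r_i/\sqrt{\eps_i})\subset M_i$, with $|\nabla\chi_i|\le C\sqrt{\eps_i}/r_i$ and $|\Delta\chi_i|\le C\eps_i/r_i^2$. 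Set
\[
\phi_i=\chi_i\cos\!\bigl(\sqrt{\lambda}\,\tilde b_i\bigr).
\]
The product rule for $\Delta(\chi_i\cdot)$ combined with the identity
\[
(\Delta-\lambda)\cos(\sqrt{\lambda}\tilde b_i)=\lambda(|\nabla\tilde b_i|^2-1)\cos(\sqrt{\lambda}\tilde b_i)-\sqrt{\lambda}\sin(\sqrt{\lambda}\tilde b_i)\,\Delta\tilde b_i
\]
splits $(\Delta-\lambda)\phi_i$ into an interior piece controlled by the $\Psi(i^{-1})$ Cheeger-Colding error and an annular piece where $|\nabla\chi_i|$ and $|\Delta\chi_i|$ tend to zero. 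The denominator satisfies $\|\phi_i\|_{L^2}^2\gtrsim\tfrac12\mathrm{Vol}_g\bigl(B^g_{x_i}(r_i/(2\sqrt{\eps_i}))\bigr)$, since $\cos^2(\sqrt{\lambda}\tilde b_i)$ averages to $1/2$ over the long range of $\tilde b_i$; Bishop-Gromov comparison (applicable because $\mathrm{Ric}\ge-\delta_i$ on $M_i$) keeps the annular and interior volumes comparable. Combining these estimates yields $\|(\Delta-\lambda)\phi_i\|_{L^2(M)}\le\Psi(i^{-1})\|\phi_i\|_{L^2(M)}$, as required.

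The main obstacle is the extraction of the almost-harmonic function $b_i^*$: one must verify the geometric hypotheses of the Cheeger-Colding almost-splitting theorem (almost-line of divergent length with vanishing excess) on the rescaled balls and track the resulting quantitative estimates cleanly through the rescaling. Once this geometric input is secured, the cosine identity, product rule, and Bishop-Gromov comparison are routine, and the only additional care needed is the simultaneous tuning of the rates $\delta_i,\,R_i,\,\eps_i$ so that every error term collapses into a single $\Psi(i^{-1})$.
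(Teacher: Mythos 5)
Your overall strategy (blow down, extract an almost-splitting direction via Cheeger--Colding, unscale, multiply a plane wave by a cutoff, invoke Weyl) is the same as the paper's, but there are two genuine gaps in the way you implement it.

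\textbf{The almost-line through $x_i$ is not justified.} You claim that by picking ``two nearly antipodal boundary points'' and joining them through $x_i$, ``Colding's excess estimate produces an almost-line through $x_i$\dots with excess going to zero.'' This is not true in general: the Abresch--Gromoll/Colding excess estimate propagates smallness of excess from a point to a neighborhood, it does not produce a configuration of small excess out of nothing. If the Gromov--Hausdorff limit $X$ of the rescaled balls $(M_i,x_i,\eps_i g)$ is, say, a metric cone with small cone angle (a paraboloid blown down at its pole is a concrete example consistent with $\mathrm{Ric}\geq 0$), then there is no line through the limit of $x_i$, and the excess at $x_i$ of the configuration you build stays bounded away from zero. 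What the paper does instead---and this is the entire content of its Lemma~\ref{lemGH}---is to pass to the Ricci-limit space $X$, use that its regular set is dense (Cheeger--Colding III), zoom in at a \emph{regular} point, and diagonalize. This forces a change of base point from $x_i$ to some $p_i\in M_i$ (and possibly a faster rescaling); only then does the limit become $\mathbb{R}^q$ and the harmonic map $\Phi_i$ of Cheeger--Colding II become available. You cite Lemma~\ref{lemGH} only for the choice of $\eps_i$ but then keep $x_i$ fixed, which is exactly where the argument breaks.

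\textbf{The radial geodesic cutoff lacks a Laplacian bound.} You take $\chi_i$ to be a cutoff in geodesic balls and assert $|\Delta\chi_i|\leq C\eps_i/r_i^2$. Under a Ricci \emph{lower} bound alone you only get an upper bound on $\Delta r$ (Laplacian comparison); since $\chi_i'\leq 0$, the term $\chi_i'\Delta r$ in $\Delta\chi_i=\chi_i''+\chi_i'\Delta r$ is bounded below but not above, and on the cut locus $\Delta r$ can be arbitrarily negative. So the pointwise bound you use for the annular error term does not follow from the hypotheses. The paper sidesteps this entirely by cutting off in the level sets of $\|\tilde\Phi_i\|$: the quantities $|\nabla\|\tilde\Phi_i\||$ and $\Delta\|\tilde\Phi_i\|$ are controlled (in $L^2$) by the Cheeger--Colding estimates $\sum_{j,l}|\langle\nabla\tilde b_{i,j},\nabla\tilde b_{i,l}\rangle-\delta_{jl}|$, so $|\Delta(\chi_i(\|\tilde\Phi_i\|))|$ is small where it matters. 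If you insist on a geodesic cutoff you would at least need the Cheeger--Colding ``good cutoff'' lemma (which does give $|\Delta\phi|\leq C/r$ from a Ricci lower bound via a Green's function argument), and you should say so.

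A smaller point: the lower bound $\|\phi_i\|^2\gtrsim \tfrac12\mathrm{Vol}$ for $\phi_i=\chi_i\cos(\sqrt\lambda\tilde b_i)$ requires an argument that $\cos^2(\sqrt\lambda\tilde b_i)$ does in fact ``average to $1/2$,'' which needs a coarea-type control on the level sets of $\tilde b_i$ (or the pigeonhole observation that one of $\cos,\sin$ must work since their squares sum to $1$). The paper avoids this entirely by using the complex exponential $e^{i\sqrt\lambda\,\tilde b_{i,1}}$, whose modulus is identically $1$; that is the cleaner choice.
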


\begin{proof}
By  Lemma \ref{lemGH},  there exist a sequence $x_i\in M_i$ and $\eps_i\to 0$ such that
\[
d_{\rm GH}\left((M_i, x_i, \eps_i g, 5),(\mathbb{R}^q, 0, g_E, 5)\right) \to 0.
\]

By \cite{CCoII}, there exist harmonic maps
\[
\Phi_i: (M_i, x_i, \eps_i g, 3) \to \mathbb{R}^q
\]
satisfying the following properties.  First, we have  $\Phi_i(M_i, x_i, \eps_i g, 1) \subset (\mathbb{R}^q, 0, g_E, 2)$ and $\; |\nabla  \,\Phi_i| \leq c(n)\;$. Writing the map $\Phi_i$ in coordinates, $\Phi_i = (b_{i,1}, \cdots, b_{i,q})$, then for $1\leq j,l\leq q$,  we have
\begin{align*}
&\int_{(M_i, x_i, \eps_i g, 1)} \sum_{j,l} |\mathrm{Hess} \,b_{i,j}|^2 \leq \Psi(i^{-1}) \cdot \mathrm{Vol} ((M_i, x_i, \eps_i g, 1)), \quad and \\
&\int_{(M_i, x_i, \eps_i g, 1)} \sum_{j,l}  |\<\n b_{i,j}, \n b_{i,l}\>-\delta_{jl}|\leq\Psi(i^{-1}) \cdot\mathrm{Vol} ((M_i, x_i, \eps_i g, 1)).
\end{align*}
The  Gromov-Hausdorff convergence implies that
\[
|d(\Phi_i(q_1),\Phi_i(q_2)) - d(q_1,q_2)| \leq \Psi(i^{-1}) \]
for any $q_1,q_2\in (M_i, x_i, \eps_i g, 1)$,
where the distances are with respect to the metrics $\eps_i g$ and the Euclidean distance respectively.

In order to use the above results, we make the following rescaling.
We use the diffeomorphism $\xi_i:(M_i, x_i, g, \eps_i^{-1/2}) \to (M_i, x_i, \eps_i g, 1)$ and a similar map on $(\mathbb{R}^q, 0, g_E, 1)$ to get rescaled  maps
\[
\tilde\Phi_i: (M_i, x_i,   g,  \eps_i^{-1/2}) \to  (\mathbb{R}^q, 0, \eps_i^{-1} g_E, \eps_i^{-1/2}),\quad \quad\tilde\Phi_i =\frac{\Phi_i}{\sqrt\eps_i}= (\tilde b_{i,1}, \cdots, \tilde b_{i,q}), \\
\]
which are also harmonic. It is clear  that the rescaled maps also satisfy $|\nabla \tilde \Phi_i|\leq c(n)$ and
\begin{align} \label{ric_2}
\begin{split}
&\int_{(M_i, x_i, g, {\eps_i}^{-1/2})} \sum_{j,l} |\mathrm{Hess}\, \tilde b_{i,j}|^2 \leq \Psi(i^{-1})\cdot \mathrm{Vol} ((M_i, x_i, g, \eps_i^{-1/2})),\\
&\int_{(M_i, x_i, g, {\eps_i}^{-1/2})} \sum_{j,l}  |\<\n \tilde b_{i,j}, \n \tilde b_{i,l}\>-\delta_{jl}|\leq \Psi(i^{-1})\cdot\mathrm{Vol} ((M_i, x_i, g, \eps_i^{-1/2})).
\end{split}
\end{align}
For any $q_1, q_2 \in (M_i, x_i, g, {\eps_i}^{-1/2})$, we have
\begin{equation}\label{ric_3}
|d(\tilde\Phi_i(q_1),\tilde \Phi(q_2)) - d(q_1,q_2)| \leq \Psi(i^{-1}) \eps_i^{-1/2}  \end{equation}
where $d(x,y)$ denotes the distance in the (rescaled) respective metrics. In particular, if we take $q=q_1$, and $\tilde\Phi(q_2)=0$, we have
\begin{equation}\label{ric_4}
\bigl|{|\!|}\tilde \Phi_i{|\!|}(q)-d(q,q_2)\bigr|= \Psi(i^{-1}) \eps_i^{-1/2}
\end{equation}
where $  {|\!|} \tilde \Phi_i{|\!|}=\sqrt{\tilde b_{i,1}^2+\cdots+\tilde b_{i,q}^2}$.

Let $\chi_i(r)$ be  a smooth function on $\R$ such that
$\chi_i(r)=1$ for $|r|\leq \eps_i^{-1/2}/2$; $\chi_i(r)=0$ for $|r|>\eps_i^{-1/2}$; and $|\chi_i'|\leq 4\sqrt{\eps_i}$.
 Since $\tilde\Phi_i$ is harmonic, a straightforward computation shows that
\[
\bigl||\n {|\!|}\tilde \Phi_i{|\!|}|^2-1\bigr|\leq C\sum_{j,l}  |\<\n \tilde b_{i,j}, \n \tilde b_{i,l}\>-\delta_{jl}|,\quad  \bigl|\Delta {|\!|}\tilde \Phi_i{|\!|}\bigl|\leq C\; \frac{\sum_{j,l}  |\<\n \tilde b_{i,j}, \n \tilde b_{i,l}\>-\delta_{jl}|}{{|\!|}\tilde \Phi_i{|\!|}}.
\]
Define
\[
\phi_i=e^{i\sqrt\lambda \,\tilde b_{i,1}}\chi_i({|\!|}\tilde \Phi_i{|\!|}).
\]
By~\eqref{ric_4}, we know that $\phi_i$ is a smooth function with compact support.
For $\eps_i^{-1/2}/2<{|\!|}\tilde \Phi_i{|\!|}<\eps_i^{-1/2}$, we have
\[
|(\Delta - \lambda) \phi_i|^2\leq C  \left| |\nabla \tilde b_{i,1}|^2-1\right|+C\sqrt{\eps_i},
\]
and for all other values of $\|\tilde \Phi_i\|$ the left side is zero.   Estimate~\eqref{ric_2} gives
\[
\|(\Delta - \lambda) \phi_i\|^2_{L^2(M)} \leq \Psi(i^{-1}) \cdot\mathrm{Vol} ((M_i, x_i, g, \eps_i^{-1/2})).
\]
By~\eqref{ric_4},
\[
\{q\mid {|\!|}\tilde\Phi_i{|\!|}(q)<\eps_i^{-1/2}/2\}\supset B_p((1-\Psi(i^{-1}) )\eps_i^{-1/2}/2).
\]
 Therefore,
\[
\|\phi_i\|^2_{L^2(M)} \geq \mathrm{Vol} ((M_i, x_i, g, (1-\Psi(i^{-1}) )\eps_i^{-1/2}/2)).
\]
Since the Ricci curvature is asymptotically nonnegative, we have the following volume comparison inequality
\begin{equation}\label{vol_3}
\mathrm{Vol} ((M_i, x_i, g, \eps_i^{-1/2}))\leq C\, \mathrm{Vol} ((M_i, x_i, g, (1-\Psi(i^{-1}) )\eps_i^{-1/2}/2)).
\end{equation}

The lemma is proved by combining the above inequalities.
\end{proof}

Finally, we are able to prove  Theorem~\ref{thmRic}, which is an extension of Lemma~\ref{lemGH2}.
\begin{proof}[Proof of Theorem~\ref{thmRic}] Assume that $k\leq q$.
Define the test $k$-forms
\[
\omega_i =\phi_i  \; d\tilde b_{i,1} \wedge \cdots \wedge d\tilde b_{i,k}
\]
where  $\phi_i$ and $\tilde b_{i,j}$ are defined as in Lemma~\ref{lemGH2}. By the Weitzenb\"ock formula, it is well known that
\begin{equation} \label{ric_1}
\begin{split}
(\Delta  - \lambda) \omega_i = & (\Delta \phi_i - \lambda \phi_i \,)  \, d\tilde b_{i,1} \wedge \cdots \wedge d\tilde b_{i,k} -2 \n_{\n \phi_i } (d\tilde b_{i,1} \wedge \cdots \wedge d\tilde b_{i,k}) \\
& + \phi_i \,\Delta  (d\tilde b_{i,1} \wedge \cdots \wedge d\tilde b_{i,k}).
\end{split}
\end{equation}

Fix any $\alpha >1$. Using formula \eref{ric_1}  together with the properties of the $\tilde b_{i,j}$ we get, for $\gamma=1,2$, that
\begin{align} \label{ric_30}
\begin{split}
&\bigl|(\,(\Delta +\alpha)^{-\gamma}\omega_i,(\Delta  - \lambda) \omega_i \,)\bigr| \\ &\leq C \,\| \phi_i\|_{L^2(M)}  \cdot  \left[
\|(\Delta - \lambda) \phi_i   \|_{L^2(M)}   + \| |\n \phi_i|\cdot |\mathrm{Hess}\,\tilde\Phi_i| \, \|_{L^2(M)}   \right] \\
&  + \bigl|(\,(\Delta +\alpha)^{-\gamma}\omega_i,\phi_i \Delta (d\tilde b_{i,1} \wedge \cdots \wedge d\tilde b_{i,k}) \,)\bigr|,
\end{split}
\end{align}
where we have used the fact that $(\Delta +\alpha)^{-\gamma}$ is bounded on $L^2$ for the first two terms in the right side.

By the proof of  Lemma~\ref{lemGH2} and ~\eqref{ric_2}, we have
\begin{equation}\label{ric_10}
\|(\Delta - \lambda) \phi_i   \|_{L^2(M)}   + \| |\n \phi_i|\cdot |\mathrm{Hess}\,\tilde\Phi_i| \, \|_{L^2(M)}  \leq
\Psi(i^{-1})\cdot\sqrt{\mathrm{Vol} ((M_i, x_i, g, \eps_i^{-1/2}))}.
\end{equation}
For the third term in the right side of \eref{ric_30}, we let $\eta_i = (\Delta +\alpha)^{-\gamma}\omega_i$ and observe that
\begin{equation*}
|(\,\eta_i ,\phi_i \Delta (d\tilde b_{i,1} \wedge \cdots \wedge d\tilde b_{i,k}) \,)| = |(\, \delta(\phi_i \eta_i) , \delta(d\tilde b_{i,1} \wedge \cdots \wedge d\tilde b_{i,k}) \,)|.
\end{equation*}
Since $ \delta(\phi_i \eta_i) = -\iota(\n \phi_i) \eta_i + \phi_i \delta \eta_i $, and
\begin{equation*}
\begin{split}
\|  \delta \eta_i \|_{L^2(M)}^2 & \leq \|  \delta \eta_i \|_{L^2(M)}^2 + \|  d \eta_i \|_{L^2(M)}^2  = (\eta_i, \Delta (\Delta +\alpha)^{-\gamma}\omega_i) \\
& =  (\eta_i,  (\Delta +\alpha)^{-\gamma+1}\omega_i) - \alpha (\eta_i,  (\Delta +\alpha)^{-\gamma}\omega_i ) \leq C\| \omega_i\|_{L^2(M)}^2,
\end{split}
\end{equation*}
we have $ \|\delta (\phi_i\eta_i) \|_{L^2(M)}\leq C\|\omega_i\|_{L^2(M)}. $ Therefore,  by \eref{ric_2}
\begin{align}  \label{ric_6}
\begin{split}
&|(\, \delta(\phi_i \eta_i) , \delta(d \tilde b_{i,1} \wedge \cdots \wedge d \tilde b_{i,k}) \,)|\\ & \leq C \|\omega_i\|  \;   \|\mathrm{Hess} \,\tilde \Phi_i\|  \leq \Psi(i^{-1}) \|\omega_i\|_{L^2(M)}\cdot\sqrt{\mathrm{Vol} ((M_i, x_i, g, \eps_i^{-1/2}))}.
\end{split}
\end{align}
In order to estimate the $L^2$-norm of $\omega_i$ from below, we use the following fact from linear algebra
\[
|d\tilde b_{i,1} \wedge \cdots \wedge d\tilde b_{i,k}|^2\geq 1-c(n)\sum_{j,l}|\<\n \tilde b_{i,j}, \n \tilde b_{i,l}\>-\delta_{jl}|.
\]
Thus  for $i$ sufficiently large we obtain
\begin{equation} \label{ric_9}
\begin{split}
\|\omega_i\|^2 &\geq \int_{{|\!|}\tilde\Phi_i{|\!|}<\eps_i^{-1/2}/2} |d\tilde b_{i,1} \wedge \cdots \wedge d\tilde b_{i,k}|^2\geq
C\, \mathrm{Vol} ((M_i, x_i, g, (1-\Psi(i^{-1}) )\eps_i^{-1/2}/2)).
\end{split}
\end{equation}

Using \eref{ric_30}, \eref{ric_10}, ~\eqref{ric_6} and ~\eqref{ric_9}, together with the volume comparison inequality ~\eqref{vol_3}, we get
\[
\bigl|(\,(\Delta +\alpha)^{-\gamma}\omega_i,(\Delta  - \lambda) \omega_i \,)\bigr|  \leq \Psi(i^{-1})  \,\| \omega_i\|^2
\]
for $\gamma=1,2$. Then by  the generalized Weyl Criterion (Corollary~\ref{cor21}), $\lambda\in\sigma_{\rm ess}(k,\Delta, M)$ for $k\leq q$. The case $k\geq n-q$ follows from Poincar\'e Duality.
\end{proof}

\begin{remark}
Contrary to Lemma~\ref{lemGH2},
the Cheeger-Colding estimates do not appear sufficient to prove Theorem \ref{thmRic} using the classical Weyl criterion  for the case of $k$-forms with $1\leq k\leq n/2$. This is due to the fact that they do not provide adequate $L^2$-estimates for the third term in the right side of \eref{ric_1}. Our generalized Weyl criterion allowed us to simplify this term by considering instead the third term in \eref{ric_30} and integrating by parts. \end{remark}

Since $q\geq 1$, the result of Theorem \ref{thmRic} is true over a manifold with asymptotically nonnegative Ricci curvature for the case of functions and 1-forms and is therefore consistent with \cite{ChLu6} where it was proved that the 1-form spectrum always contains the function spectrum.  Our result partially addresses the $k$-form spectrum of these manifolds for $k>1$, which was until now completely open.

 \begin{proof}[Proof of Corollary~\ref{cor14}]
It is well known that if the volume of $M$ satisfies ~\eqref{c5}, then the dimension of the manifold at infinity is at least $s$. The corollary then follows from
Theorem~\ref{thmRic}.
 \end{proof}

\section{Examples and Further Discussions}\label{11}

In the section we include a few interesting examples of manifolds where we can compute the essential spectrum on $k$-forms explicitly, and then discuss Conjecture \ref{16}.

\begin{example}
Let $(K_o^3,g_o)$ denote the compact flat three-manifold constructed by Hantzsche and Wendt in 1935 with first Betti number zero. Consider the warped product manifold $M^4= \mathbb{R} \times_{\varphi} K_o^3$ with metric $dr^2 +\varphi(r)^2 g_o$ and $\varphi(r)= |r|^{-1}$ for $|r|\geq r_o$.  Then, the sectional curvature, and hence the curvature tensor, of $M$ is asymptotically zero  satisfying  $|Rm(M)|\leq c/r^2$ for $|r|\geq 2r_o$.

By \cite{char-lu-1}*{Theorem 1.3}, $\sigma_{\mathrm ess} (0,\Delta)=[0,\infty)$. Using \cite{ChLu6}*{Theorem 4.1} we also get $\sigma_{\mathrm ess} (1,\Delta)=[0,\infty)$. Poincar\'e duality gives
\[
\sigma_{\mathrm ess} (k,\Delta)=[0,\infty) \qquad \text{for} \ \ k=0,1,3,4.
\]

At the same time, it is easy to see that any minimal sequence as obtained in Proposition \ref{prop26} collapses in the pointed Gromov-Hausdorff sense to $\mathbb{R}$. Moreover, any minimal sequence is close to a product manifold as in Theorem \ref{thmSpecGH} with fiber $N_i=K_o^3$ which shrinks as $i\to \infty$. As a result,
\[
\sigma_{\mathrm ess} (2,\Delta)= \emptyset.
\]

This result is also consistent with what one can get by decomposing the Laplacian on 2-forms over a manifold with a pole as in \cite{Don2}*{Section 4}. In this case, due to the positive first eigenvalue on 1-forms and 2-forms of the cross-section $K_o^3$, the Laplacian on 2-forms reduces to second order operators on the real line whose potential becomes infinite, and hence have only discrete spectrum.
\end{example}

\begin{example}
Consider now a more general warped product manifold $M^n= \mathbb{R} \times_{\varphi} K^{n-1}$ where $ K^{n-1}$ is a compact flat manifold. We endow $M$ with the metric $dr^2 +\varphi(r)^2 g_o$ and $\varphi(r)= |r|^{-1}$ for $|r|\geq r_o$.  Let $b_k$ denote the $k^{\text{th}}$ Betti number of $K$. Again the curvature tensor of $M$ is asymptotically zero. Working as in the previous example we get that for $1\leq k \leq n/2$
\begin{equation*}
\begin{split}
&\sigma_{\mathrm ess} (k,\Delta)= \emptyset \qquad \text{whenever} \qquad b_{k-1}=b_k=0,  \qquad \text{and}\\
&\sigma_{\mathrm ess} (k,\Delta)= [0,\infty) \qquad \text{otherwise}.
\end{split}
\end{equation*}

\end{example}

We now consider doubly warped product manifolds of the type $M=I \times_{\varphi} K_1^p \times_{\psi} K_2^q$ where $I\subset \mathbb{R}$ for $r\in I$, and $(K_1^p,g_1)$ and $(K_2^q,g_2)$ are compact Riemannian manifolds of dimension $p$ and $q$ respectively. The functions $\varphi(r)$, $\psi(r)$ define the doubly warped metric on $M$, which is given by
\[
g=d\rho^2 + \varphi(\rho)^2 g_1 + \psi(\rho)^2 g_2
\]
In the case when the manifolds $K_1$, $K_2$ are flat, then following the same procedure as in \cite{Pet}*{Chapter 3} the sectional curvatures of $M$ are convex linear combination of
\[
-\frac{\varphi''}{\varphi}, \  -\frac{\psi''}{\psi}, \ -\frac{(\varphi')^2}{\varphi^2}, \ -\frac{(\psi')^2}{\psi^2}, \ -\frac{\varphi'\psi'}{\varphi \psi}.
\]
If on the other hand $K_1$ is a $p$-dimensional sphere $S^p$, then the third term in the list above must be replaced by $ \left( 1- (\varphi')^2 \right)/\varphi^2$, and similarly for the fourth term if $K_2$ becomes $S^q$.

Using a doubly warped product we can also construct manifolds for which
\begin{equation*}
\begin{split}
&\sigma_{\mathrm ess} (k,\Delta)=[\alpha_k,\infty) \qquad \text{for} \ \ 0\leq k \leq m <n/2, \qquad \text{and}\\
&\sigma_{\mathrm ess} (k,\Delta)=\emptyset \qquad \text{for} \ \ m <k \leq n/2
\end{split}
\end{equation*}
for some $m$, with $0\leq \alpha_k<\infty$. One such example is the following.

\begin{example}
With $K_o^3$ as in the previous example, we construct the 13-dimensional manifold
\[
M^{10}= \mathbb{R} \times_{\varphi} K_o^3 \times_{\psi} (K_o^3\times K_o^3\times K_o^3)
\]
with $\varphi(r)= C_o$, a constant for all $r$, and $\psi(r)=1/r$ for $|r|\geq r_o$. Then  as  in the previous example, the sectional curvature, and hence the
curvature tensor, of $M$ is again asymptotically zero satisfying  $|Rm(M)|\leq c/r^2$.

Then
\begin{equation*}
\begin{split}
& \sigma_{\mathrm ess} (k,\Delta)=[0,\infty) \qquad \text{for} \ \ k=0,1,3,4 \\
& \sigma_{\mathrm ess} (2,\Delta)=[\alpha_2,\infty), \qquad \text{and} \ \ \\
& \sigma_{\mathrm ess} (k,\Delta)=\emptyset \qquad \text{for} \ \ k=5,6,
\end{split}
\end{equation*}
where $\alpha_2$ is the first  eigenvalue of the Laplacian on 1-forms over $K_o$. The result for the spectrum on 5-forms and 6-forms, follows from the fact that any minimal sequence collapses in the pointed Gromov-Hausdorff sense to a 4-dimensional manifold, and there are no harmonic 5-forms, nor 6-forms on $M$. For $7\leq k\leq 13$, we get the spectrum by Poincar\'e duality.

\end{example}

We end this paper by discussing Conjecture~\ref{16}, where we claimed that whenever Ricci curvature is asymptotically non-negative, then the spectrum of the Laplacian on differential $k$-forms is \emph{computable} for all $k$. For the function spectrum, the conjecture holds (cf.~\cites{Lu-Zhou_2011,wang}). By ~\cite{ChLu6} and Poincar\'e duality, for $k=1,n-1,n$,  $\sigma_{\mathrm ess} (k,\Delta)=[0,\infty)$ as well.  For the general case, we first make the following observation.

\begin{prop}\label{84}
Let $\lambda\in\R$ be a non-negative real number, and $B_{p_i}(R_i)$ a sequence of disjoint balls  with $R_i\to\infty$. Let  $\omega_i$  be  a $k$-form $B_{p_i}(R_i)$  such that $\Delta\omega_i=\lambda\omega_i$ (note that no boundary conditions on $\omega_i$ are assumed).
Then, $\lambda\in \sigma_{\mathrm ess} (k,\Delta)$ if there exists a sequence $R_i'<R_i-3$, $R_i'\to\infty$, such that
\begin{equation} \label{e81}
(R_i-R_i')^{-2}\int_{B_{p_i}(R_i)\backslash B_{p_i}{(R_i')}}|\omega_i|^2\leq \Psi(i^{-1})\int_{B_{p_i}(R_i')}|\omega_i|^2.
\end{equation}
\end{prop}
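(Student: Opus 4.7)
My plan is to apply the generalized Weyl criterion (Corollary~\ref{cor21}) to the truncated forms $\phi_i:=\chi_i\omega_i$. Here $\chi_i$ will be a smooth cut-off with $\chi_i\equiv 1$ on a set containing $B_{p_i}(R_i')$ and $\chi_i\equiv 0$ outside $B_{p_i}\!\bigl(R_i'+\tfrac12(R_i-R_i')\bigr)$, arranged so that $\supp\nabla\chi_i$ lies at distance at least $c(R_i-R_i')$ from both $\partial B_{p_i}(R_i')$ and $\partial B_{p_i}(R_i)$, and satisfies $|\nabla\chi_i|\le C/(R_i-R_i')$, $|\nabla^2\chi_i|\le C/(R_i-R_i')^2$. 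The hypothesis $R_i-R_i'>3$ is exactly what makes this positioning possible. The $\phi_i$ are smooth with compact support and obey $\|\phi_i\|^2\ge\int_{B_{p_i}(R_i')}|\omega_i|^2$; since the balls $B_{p_i}(R_i)$ are pairwise disjoint with centres tending to infinity, $\phi_i/\|\phi_i\|$ converges weakly to zero in $L^2(\Lambda^k(M))$.

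Fix $\alpha>0$ and set $\eta_i:=(\Delta+\alpha)^{-\gamma}\phi_i$ for $\gamma\in\{1,2\}$. Pure functional calculus, together with the identity $\|d\eta_i\|^2+\|\delta\eta_i\|^2+\alpha\|\eta_i\|^2=((\Delta+\alpha)^{-\gamma+1}\phi_i,\eta_i)$, yields $\|\eta_i\|,\|d\eta_i\|,\|\delta\eta_i\|\le C_\alpha\|\phi_i\|$ with no curvature input. The central computation will be an integration by parts: expanding $d(\chi_i\omega_i)$ and $\delta(\chi_i\omega_i)$ by Leibniz, moving the outer $d,\delta$ onto $\eta_i$ (boundary terms vanish because $\chi_i$ is zero near $\partial B_{p_i}(R_i)$), and invoking $\Delta\omega_i=\lambda\omega_i$ to cancel the bulk contribution, I expect the rearrangement
\begin{align*}
((\Delta-\lambda)\phi_i,\eta_i)=\,&(d\chi_i\wedge\omega_i,d\eta_i)-(\iota(\nabla\chi_i)\omega_i,\delta\eta_i)\\
&+(d\chi_i\wedge\delta\omega_i,\eta_i)-(\iota(\nabla\chi_i)d\omega_i,\eta_i),
\end{align*}
in which every factor is a first-order operator on either $\omega_i$ or $\eta_i$.

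To estimate the first two terms I would use Cauchy--Schwarz, the cut-off bound, the functional-calculus estimate on $d\eta_i,\delta\eta_i$, and the hypothesis~\eqref{e81} in the form $\|\omega_i\|_{L^2(\supp\nabla\chi_i)}\le(R_i-R_i')\Psi(i^{-1})^{1/2}\|\phi_i\|$; each is then $\le C_\alpha\Psi(i^{-1})^{1/2}\|\phi_i\|^2$. For the last two terms I need $d\omega_i$ and $\delta\omega_i$ on $\supp\nabla\chi_i$, which a Caccioppoli argument will provide: testing $\Delta\omega_i=\lambda\omega_i$ against $\tilde\chi_i^2\omega_i$ for an auxiliary cut-off $\tilde\chi_i\equiv 1$ on $\supp\nabla\chi_i$ supported strictly inside the annulus with $|\nabla\tilde\chi_i|\le C/(R_i-R_i')$, and absorbing cross-terms by Young's inequality, should yield
\[
\|d\omega_i\|_{L^2(\supp\nabla\chi_i)}^2+\|\delta\omega_i\|_{L^2(\supp\nabla\chi_i)}^2\le C(1+\lambda)(R_i-R_i')^2\Psi(i^{-1})\|\phi_i\|^2,
\]
again with no curvature input, and combining with the cut-off and $\eta_i$ bounds gives the same $C_{\alpha,\lambda}\Psi(i^{-1})^{1/2}\|\phi_i\|^2$ control for these terms.

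The hard part, as I see it, will be the complete absence of any curvature hypothesis in the statement, which rules out the classical Weyl criterion: the pointwise identity $(\Delta-\lambda)(\chi_i\omega_i)=(\Delta\chi_i)\omega_i-2\nabla_{\nabla\chi_i}\omega_i$ would leave a $\|\nabla\omega_i\|_{L^2}$ term that only Weitzenb\"ock (and hence a curvature bound) can control. The whole point of the $d,\delta$ decomposition and integration by parts above is to re-express things so that only $d,\delta$ of $\omega_i$ and $\eta_i$ appear, for which Caccioppoli and functional calculus are enough. Once the four-term estimate is in hand, I obtain $|((\Delta+\alpha)^{-\gamma}\phi_i,(\Delta-\lambda)\phi_i)|\le\Psi'(i^{-1})\|\phi_i\|^2$ for $\gamma=1,2$, and together with the weak convergence $\phi_i/\|\phi_i\|\rightharpoonup 0$ this closes the argument through Corollary~\ref{cor21}, giving $\lambda\in\sigma_{\rm ess}(k,\Delta,M)$.
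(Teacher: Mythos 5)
Your proposal is correct and is essentially the paper's own proof: the four-term expansion of $((\Delta-\lambda)(\chi_i\omega_i),\eta_i)$ obtained from the identity for $\Delta(\chi\omega)-\chi\Delta\omega$ after moving $d$ and $\delta$ onto $\eta_i=(\Delta+\alpha)^{-\gamma}\phi_i$, the Caccioppoli estimate on the transition annulus, the resolvent bounds $\|\eta_i\|,\|d\eta_i\|,\|\delta\eta_i\|\le C_\alpha\|\phi_i\|$, and the appeal to Corollary~\ref{cor21} are exactly the steps the paper takes. The only cosmetic difference is that the paper's auxiliary cut-off $\tilde\rho$ transitions over a fixed width-one band (so $|\nabla\tilde\rho|\le 2$) rather than over a width of order $R_i-R_i'$, but both choices deliver the same final $\Psi(i^{-1})^{1/2}\|\phi_i\|^2$ bound.
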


\begin{proof} For any smooth $k$-form $\omega$, and smooth function $\rho$ we have the identity
\[
\Delta (\rho\omega)-\rho\Delta\omega=d\rho\wedge\delta\omega-\iota(d\rho) d\omega
-d(\iota(d\rho)\omega)+\delta (d\rho\wedge\omega)
\]
where $\iota$ denotes the contraction operator. Therefore, for $\gamma\geq 1$ we get
\begin{align*}
((\Delta-\lambda)(\rho\omega)&,(\Delta+1)^{-\gamma}(\rho\omega))=\\
=&(\rho(\Delta-\lambda)\omega,(\Delta+1)^{-\gamma}(\rho\omega)) +((d\rho\wedge\delta-\iota(d\rho)d)\omega, (\Delta+1)^{-\gamma}(\rho\omega))\\
&-(\iota(d\rho)\omega, \delta(\Delta+1)^{-\gamma}(\rho\omega))+(d\rho\wedge\omega, d(\Delta+1)^{-\gamma}(\rho\omega)).
\end{align*}
Let $\rho$ be a cut-off function such that $\rho\equiv 1$ on $B_{p_i}(R_i'+1)$ and $\rho\equiv 0$ outside $B_{p_i}(R_i-1)$. Consider the annulus $A_i=B_{p_i}(R_i-1)\backslash B_{p_i}(R_i'+1)$. Setting $\omega=\omega_i$, since $(\Delta-\lambda)\omega=0$ on $A_i$, we have
\begin{align*}
|((\Delta-\lambda)(\rho\omega)&,(\Delta+1)^{-\gamma}(\rho\omega))|\\
&\leq C(R_i-R_i')^{-1}\,(\|\delta\omega\|_{L^2(A_i)}+\|d\omega\|_{L^2(A_i)}+\|\omega\|_{L^2(A_i)})\cdot \|\rho\omega\|_{L^2(M)}.
\end{align*}

Let $\tilde\rho$ be another cut-off function such that $\tilde \rho\equiv 1$ on $A_i$;  $\tilde \rho$ vanishes outside
the annulus $B_{p_i}(R_i)\backslash B_{p_i}{(R_i')}$; and $|\nabla \tilde\rho|\leq 2$. Using integration by parts and Young's  inequality, we get
\[
\|\delta\omega\|_{L^2(A_i)}^2+\|d\omega\|_{L^2(A_i)}^2\leq
\|\tilde\rho^2\delta\omega\|_{L^2}^2+\|\tilde\rho^2d\omega\|_{L^2}^2\leq
 C \|\omega\|^2_{L^2(B_{p_i}(R_i)\backslash B_{p_i}{(R_i')})}.
\]
Therefore,
\begin{equation}\label{347}
|((\Delta-\lambda)(\rho\omega),(\Delta+1)^{-\gamma}(\rho\omega))|\leq   C(R_i-R_i')^{-1}\|\omega\|_{B_{p_i}(R_i)\backslash B_{p_i}{(R_i')})}\cdot \|\rho\omega\|_{L^2(M)}.
\end{equation}
The proposition  follows by Corollary~\ref{cor21}.
\end{proof}

\begin{corl}\label{85} Let $p\in M$ be a fixed point and
let $\omega$ be a smooth form on $M$ such that $\Delta\omega=\lambda\omega$. If $\int_{B_p(R)}|\omega|^2$ is of subexponential growth with respect to $R$, then $\lambda\in\sigma(k,\Delta)$. If in addition to the above,  $\int_{M}|\omega|^2 =\infty$, then $\lambda\in\sigma_{\rm ess}(k,\Delta)$.
\end{corl}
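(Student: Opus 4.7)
The plan is to apply Proposition~\ref{84} with all balls concentric at $p$, supplementing the missing disjointness condition with the weak-null-convergence half of the generalized Weyl criterion directly. The key input needed from the subexponential growth hypothesis is an elementary one-variable lemma: if $f(R)=\int_{B_p(R)}|\omega|^2$ is non-decreasing with $\log f(R)=o(R)$, then there exists a sequence $R_i\to\infty$ with $f(R_i)/f(R_i-5)\to 1$. Indeed, were $\log f(R)-\log f(R-5)\geq\delta>0$ for all large $R$, iterating would give $\log f(R_0+5k)\geq\log f(R_0)+\delta k$, contradicting subexponentiality.

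Given such a sequence, I would take smooth cut-offs $\rho_i$ supported in $B_p(R_i)$, identically $1$ on $B_p(R_i-4)$, with $|\nabla\rho_i|\leq 2$, and set $u_i=\rho_i\omega$. The integration-by-parts identity used in the proof of Proposition~\ref{84}, applied with $R_i'=R_i-5$ and a single concentric ball, gives
\[
\bigl|\langle(\Delta-\lambda)u_i,(\Delta+1)^{-\gamma}u_i\rangle\bigr|\leq C\,\|\omega\|_{L^2(B_p(R_i)\setminus B_p(R_i-5))}\,\|u_i\|_{L^2(M)}
\]
for $\gamma=1,2$. Using $\|u_i\|^2\geq f(R_i-5)$ and $\|\omega\|_{L^2(B_p(R_i)\setminus B_p(R_i-5))}^2=f(R_i)-f(R_i-5)$, dividing through by $\|u_i\|^2$ bounds the right-hand side by $C\sqrt{f(R_i)/f(R_i-5)-1}\to 0$. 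By Corollary~\ref{cor21}, this already yields $\lambda\in\sigma(k,\Delta)$.

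The additional hypothesis $\int_M|\omega|^2=\infty$ forces $f(R_i-5)\to\infty$ and hence $\|u_i\|_{L^2}\to\infty$. Then the normalized forms $v_i=u_i/\|u_i\|$ converge weakly to zero: for any compactly supported test form $\varphi$, once $i$ is large enough that $\rho_i\equiv 1$ on $\operatorname{supp}\varphi$, the inner product $\langle v_i,\varphi\rangle=(\omega,\varphi)/\|u_i\|$ tends to zero, and weak vanishing on this dense subspace extends to all of $L^2(\Lambda^k M)$. Combined with the previous estimate, the essential-spectrum form of Corollary~\ref{cor21} gives $\lambda\in\sigma_{\rm ess}(k,\Delta)$. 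The only non-mechanical step is the growth lemma; everything else is a direct repetition of the argument in Proposition~\ref{84}, with the blow-up of $\|u_i\|$ taking over the role that disjointness of supports played there in producing a weak null sequence.
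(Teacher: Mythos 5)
Your proof is correct and follows essentially the same argument as the paper's: a cut-off concentrated near radius $R_i$, the integration-by-parts estimate of Proposition~\ref{84}, the elementary growth lemma extracted from subexponential growth, and Corollary~\ref{cor21}. One small point worth noting: the paper's displayed cut-off $\rho_n$ reads as annular (supported on $\{n-1<d(x,p)<n+2\}$, identically $1$ on $\{n<d(x,p)<n+1\}$), but that choice would make the error term comparable to $\|\omega_n\|^2$ rather than a small fraction of it; the concentric ball cut-offs you use ($1$ on $B_p(R_i-4)$, $0$ outside $B_p(R_i)$) are what the logic of the proof actually requires -- and also what the corollary's extra hypothesis $\int_M|\omega|^2=\infty$ presupposes, since with ball cut-offs the weak-null condition is not automatic but must come from $\|u_i\|_{L^2}\to\infty$, exactly as you argue. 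You make this last step explicit, which the paper leaves implicit in its appeal to Corollary~\ref{cor21}.
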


\begin{proof} We consider smooth cut-off functions $\rho_n$ such that
\[
\rho_n=\left\{
\begin{array}{ll}
1& n<d(x,p)<n+1\\
0 & d(x,p)<n-1 \text{ or } d(x,p)>n+2
\end{array}
\right.,
\]
and $|\nabla \rho_n|\leq 2$. Let $\omega_n=\rho_n\omega$. Then by a similar argument as in~\eqref{347}, we have
\[
|((\Delta-\lambda)(\omega_n),(\Delta+1)^{-\gamma}(\omega_n))|\leq C
\|\omega\|_{B_{p}(n+1)\backslash B_{p}(n)}\cdot \|\omega_n\|_{L^2(M)}.
\]
Since $\int_{B_p(R)}|\omega|^2$ is of subexponential growth,
then for any $\eps>0$,
there are infinitely many $n_j$'s such that
\[
\|\omega\|_{B_{p}(n_j+1)\backslash B_{p}{(n_j)}}\leq\eps\|\omega_{n_j}\|_{L^2(M)}.
\] Again by Corollary~\ref{cor21}, this completes the proof.
\end{proof}

\begin{remark}	
An important aspect of both Proposition~\ref{84} and Corollary~\ref{85}, is that they require no curvature assumptions on the manifold. This is made possible by our generalized Weyl Criterion. The classical Weyl criterion on the other hand, would have required at least a lower bound on the Ricci curvature as well as Cheeger-Colding theory in order to reach the same result.
\end{remark}

Proposition~\ref{84} allows us to make the following observation with regards to Conjecture~\ref{16}. Consider  a complete Riemannian manifold $M$, of dimension $n$, and assume that   $\sigma_{\rm ess}(k,\Delta)\neq\emptyset$. By Proposition~\ref{84}, a generalized eigenvalue of the Laplacian on $B_{p_i}(R_i)$, in the sense that $\Delta\omega_i=\lambda\omega_i$, will belong to the essential spectrum whenever $
\int_{B_{p_i}(R_i')}|\omega_i|^2$
is at least a certain proportion of the total
$\int_{B_{p_i}(R_i)}|\omega_i|^2$.
In other words,  whenever $\omega_i$ does not concentrate on the annulus $B_{p_i}(R_i)\backslash B_{p_i}(R_i')$, then Conjecture ~\ref{16} should be true, because any number above the bottom of the essential spectrum would belong to the spectrum of the Laplacian.%\deleted[id=zl]{In particular, if we can show that Dirichlet eigenforms satisfy the property \eref{e81} under the assumption of nonnegative Ricci curvature, then we would have that any number above the bottom of the essential spectrum would belong to the spectrum of the Laplacian, and hence have Conjecture \ref{16}.  Equivalently, it would suffice to show that Dirichlet eigenforms which concentrate near  the boundary must in fact vanish.}
If assume the opposite  i.e. $\omega_i$ concentrates on the annulus)  then $\omega_i$ would satisfy the conditions
\begin{align*}
&\Delta\omega_i=\lambda\omega_i,\qquad \int_{B_{p_i}(R_i)\backslash B_{p_i}(R_i')}\|\omega_i\|^2=1\\
&\omega_i|_{\pa B_{p_i}(R_i)}=d^*\omega_i|_{\pa B_{p_i}(R_i)}=0\\
&\|\omega_i\|_{L^\infty(\pa B_{p_i}(R_i'))}+\|\nabla\omega_i\|_{L^\infty(\pa B_{p_i}(R_i'))}\leq\eps
\end{align*}
for some $\eps$ sufficiently small. The last
inequality follows from the fact that $
\int_{B_{p_i}(R_i')}|\omega_i|^2$ is sufficiently small.

If $\eps=0$, then by  the maximum principle, $\omega_i\equiv 0$, which is a contradiction. So the proof of   Corollary \ref{16} is related to an effective  version of the maximum principle, which we formulate it in terms of the following conjecture (for functions only) precisely.

\begin{conjecture}\label{87}
Let $M$ be a compact Riemannian manifold with smooth boundary. Assume that the Ricci curvature of $M$ is nonnegative; the second fundamental form of the boundary is bounded; and the diameter of $M$ is 1. Let $f$ be a function on $M$ such that
\[
\Delta f=\lambda f.
\]
Moreover, there is an $\eps>0$ sufficiently small such that
\[
\qquad  \|f\|_{L^\infty(\pa M)} \leq \eps, \qquad \text{and} \qquad \|\nabla f\|_{L^\infty( D)}\leq \eps
\]
on $D\subset M$ such that ${\rm meas}(D)$ is large. Then,
\[
\|f\|_{L^\infty(M)}\leq \Psi(\eps\mid n, \lambda)/\Psi_1({\rm meas}(D) \mid n,\lambda),
\]
where $\Psi$, $\Psi_1$ are functions as in \eqref{psi}.
\end{conjecture}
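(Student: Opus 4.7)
The plan is to combine a Cheng--Yau type gradient estimate for the eigenfunction with a path-tracing argument that exploits the large measure of $D$. Since $M$ has $\mathrm{Ric}\ge 0$, diameter at most $1$, and $\partial M$ with bounded second fundamental form, the standard boundary version of Yau's gradient estimate for $\Delta f=\lambda f$ delivers a constant $C_1=C_1(n,\lambda)$ with
\[
|\nabla f|(x)\le C_1\bigl(\|f\|_{L^\infty(M)}+\eps\bigr)\qquad\text{for all }x\in M.
\]
Write $M_0:=\|f\|_{L^\infty(M)}$ and pick $x_0\in M$ with $f(x_0)=M_0$ (without loss of generality the sign is positive and $\eps\ll M_0$). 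The gradient bound forces $f\ge M_0/2$ on the geodesic ball $B(x_0,r_0)$ of radius $r_0:=(4C_1)^{-1}$, which by Bishop--Gromov has volume at least $v_0=v_0(n,\lambda)>0$. Provided $\mathrm{meas}(M\setminus D)<v_0/2$---the sense in which $\mathrm{meas}(D)$ must be ``large''---this ball meets $D$, and we select $y_0\in D\cap B(x_0,r_0)$.

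The heart of the argument is to connect $y_0$ to a boundary point $z\in\partial M$ by a rectifiable curve $\gamma:[0,L]\to M$ of length $L\le 1+r_0$ such that the ``bad length'' $\ell:=\mathrm{length}(\gamma\setminus D)$ is controlled by $\mathrm{meas}(M\setminus D)$. The existence of such a curve should come by averaging over the minimizing geodesics from $y_0$ to $\partial M$, parametrized by direction $v$ in an appropriate star-shaped subset of $T_{y_0}M$: Fubini together with the Bishop--Gromov Jacobian bound shows that the mean over $v$ of $\mathrm{length}(\gamma_v\setminus D)$ is at most $C_2(n)\,\mathrm{meas}(M\setminus D)$, so pigeonholing yields some $\gamma$ with $\ell\le C_2\,\mathrm{meas}(M\setminus D)$. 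Integrating $|\nabla f|$ along this $\gamma$, using $|\nabla f|\le\eps$ on $\gamma\cap D$ and the global gradient bound elsewhere, and comparing with the endpoint values $|f(y_0)|\ge M_0/2$, $|f(z)|\le\eps$, one obtains
\[
\tfrac{M_0}{2}-\eps\le \eps L+(C_1 M_0+C_1\eps)\,\ell \le C_3\eps+C_1 C_2\,\mathrm{meas}(M\setminus D)\,M_0.
\]
Once $\mathrm{meas}(M\setminus D)$ is small enough to absorb the $M_0$-term back into the left side, this rearranges into a bound of the claimed form $M_0\le \Psi(\eps\mid n,\lambda)/\Psi_1(\mathrm{meas}(D)\mid n,\lambda)$.

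The main obstacle is the path-existence step: quantifying that a small-measure obstacle set $M\setminus D$ cannot create a thin barrier whose every crossing from $B(x_0,r_0)$ to $\partial M$ is long. The averaging-over-geodesics sketch works cleanly only inside the cut locus of $y_0$; past the cut locus a geodesic need not remain minimizing and may fail to reach the boundary, and the Bishop--Gromov Jacobian bound degenerates along these directions. In full generality one should likely replace the averaging by a Cheeger/isoperimetric estimate under $\mathrm{Ric}\ge 0$ (available via the L\'evy--Gromov inequality) comparing $\mathrm{meas}(M\setminus D)$ to the minimum $(n-1)$-dimensional mass of any hypersurface separating $B(x_0,r_0)$ from $\partial M$, and then invoke coarea to extract a short transversal curve. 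Producing a single constant depending only on $n$, $\lambda$, and $\mathrm{meas}(D)$---uniform over the collapsing geometries to which this conjecture is ultimately applied---is the delicate point, and is the reason the statement is left open.
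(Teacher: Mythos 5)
The statement you are asked about is Conjecture~\ref{87} in the paper; it is explicitly left \emph{open}. The authors state only that they believe it can be attacked via Cheeger--Colding theory and plan to return to it, so there is no proof in the paper to compare your sketch against. You correctly flag this at the end, so the real question is whether your sketch is a sound outline with one missing lemma, or whether it has deeper problems. It has several.

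First, the opening gradient estimate $|\nabla f|\leq C_1(n,\lambda)\bigl(\|f\|_{L^\infty}+\eps\bigr)$ uniform up to $\partial M$ is not available under the stated hypotheses. No boundary condition is imposed on $f$; the only control at $\partial M$ is $\|f\|_{L^\infty(\partial M)}\leq\eps$. Interior Cheng--Yau estimates degenerate like $\mathrm{dist}(\cdot,\partial M)^{-1}$, and to improve them at the boundary one needs both a boundary condition and quantitative regularity of the metric (Schauder-type control), neither of which is given. This matters because the conjecture is meant to be applied uniformly over collapsing sequences where only $C^0$/Gromov--Hausdorff control is available, not a $C^{1,\alpha}$ atlas. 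If $x_0$ sits near $\partial M$, the ball $B(x_0,r_0)$ on which you propagate $f\geq M_0/2$ may not exist with a geometry-free radius $r_0(n,\lambda)$. Second, the averaging-over-geodesics step requires a \emph{lower} bound on the exponential Jacobian $J(r,v)$, so that $\mathrm{Vol}(M\setminus D)\geq c\int_{S^{n-1}}\mathrm{length}(\gamma_v\setminus D)\,dv$. Under $\mathrm{Ric}\geq0$, Bishop--Gromov gives only the \emph{upper} bound $J(r,v)\leq r^{n-1}$; there is no pointwise lower bound, so a small-measure set can absorb a disproportionate amount of geodesic length even before the cut locus. (The Cheeger--Colding segment inequality is the tool designed to circumvent exactly this, which is presumably why the authors mention that theory.) Third, the L\'evy--Gromov replacement you propose runs in the wrong direction: it bounds perimeter \emph{from below} in terms of volume for metric balls; it does not bound \emph{from above} the minimal area of a hypersurface separating $B(x_0,r_0)$ from $\partial M$ in terms of $\mathrm{meas}(M\setminus D)$, which is what your coarea step would need. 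Finally, the Bishop--Gromov volume lower bound for $B(x_0,r_0)$ also needs to be formulated relative to $\mathrm{Vol}(M)$ (and modified to handle $\partial M\neq\emptyset$), since in the collapsing setting $\mathrm{Vol}(M)$ can be arbitrarily small and the hypothesis on $\mathrm{meas}(D)$ must be read proportionally.

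In short, your localization of the maximum and comparison along a curve are natural first steps, but the two quantitative ingredients that would make the argument close---a boundary-uniform gradient bound and a small-obstacle crossing lemma---do not follow from the tools you cite, which is consistent with the paper leaving the statement as a conjecture.
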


We believe that the conjecture can be proved using the Cheeger-Colding Theory, and
 anticipate that this approach would give a path towards the proof of Conjecture~\ref{16}. We  plan to further explore it in future work.\\

 \appendix

\section{A generalized Weyl criterion}

Let~$H$ be a  densely defined,  self-adjoint and nonnegative operator on a Hilbert space~$\Hilbert$.
The norm and inner product on~$\Hilbert$ are respectively
denoted by~$\|\cdot\|$ and $(\cdot,\cdot)$. Let $\Dom(H)$ denote the domain of $H$. Here we  prove a qualitatively and quantitatively stronger criterion  to locate the spectrum of $H$ than that  in~\cite{char-lu-1}.  In the paper~\cite{char-lu-1}, we called such a criterion a \emph{generalized Weyl Criterion}, in contrast to the \emph{classical Weyl criterion}, and we  discussed the limitations of the latter.  For a different version of the generalized Weyl criterion see~\cite{KrLu}, and for a  further application  see \cite{ChLu6}.

\begin{thm}[The Classical Weyl Criterion]
Let $H$ be defined as above. We fix  $\lambda\geq 0$, and $\delta>0$. Then
\[
{\rm dist}(\lambda,\sigma(H))<\delta
\]
if and only if there exists a sequence $\{\psi_j\}_{j \in \Nat} \subset  \Dom(H)$ with $\|\psi_j\|=1 \ \ \forall \, j\in\Nat$, such that
\[
\|(H-\lambda)\psi_j\|\leq\delta.
\]
Moreover,
\[
{\rm dist}(\lambda,\sigma_{\rm ess}(H))<\delta
\]
if and only if, in addition to the above inequality, we have $\psi_j\to 0$ weakly as $j\to\infty$.
\end{thm}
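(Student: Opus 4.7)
The plan is to deduce this statement from the spectral theorem for self-adjoint operators, which furnishes a projection-valued measure $E$ on $\sigma(H)\subset[0,\infty)$ with $H=\int t\,dE(t)$ and the usual functional calculus. Everything follows by carefully combining the functional calculus with approximation arguments on spectral projections.

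For the first equivalence, the necessity direction (construction of a sequence given proximity to the spectrum) is straightforward: if $\mathrm{dist}(\lambda,\sigma(H))<\delta$, pick $\mu\in\sigma(H)$ with $|\mu-\lambda|<\delta$, choose $\eta>0$ with $[\mu-\eta,\mu+\eta]\subset(\lambda-\delta,\lambda+\delta)$, and take any unit $\psi$ in the range of the nonzero projection $E([\mu-\eta,\mu+\eta])$; spectral calculus gives
\[
\|(H-\lambda)\psi\|^{2}=\int_{|t-\mu|\leq\eta}(t-\lambda)^{2}\,d\|E(t)\psi\|^{2}\leq(|\mu-\lambda|+\eta)^{2}<\delta^{2},
\]
and one sets $\psi_{j}:=\psi$. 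For sufficiency, suppose instead that $\mathrm{dist}(\lambda,\sigma(H))>\delta$; then $H-\lambda$ is invertible with $\|(H-\lambda)^{-1}\|\leq1/\mathrm{dist}(\lambda,\sigma(H))$, so $\|(H-\lambda)\psi_{j}\|\geq\mathrm{dist}(\lambda,\sigma(H))>\delta$, contradicting the hypothesis. (The borderline case $\mathrm{dist}(\lambda,\sigma(H))=\delta$ is absorbed by an $\varepsilon$-approximation or by sharpening the sequence condition to a strict inequality.)

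The essential-spectrum part is the conceptual heart. For the sufficiency direction, assuming $\mathrm{dist}(\lambda,\sigma_{\mathrm{ess}}(H))<\delta$ I distinguish whether the nearby point $\mu\in\sigma_{\mathrm{ess}}(H)$ is an isolated eigenvalue of infinite multiplicity, in which case an orthonormal basis of its eigenspace works directly, or a cluster point of $\sigma(H)$, in which case $E(I)$ is infinite-dimensional for every small open $I\ni\mu$ contained in $(\lambda-\delta,\lambda+\delta)$ and an orthonormal sequence from its range does the job; either way, the orthonormal sequence is weakly null by Bessel's inequality. For the necessity direction, assume $\psi_{j}\rightharpoonup0$, $\|\psi_{j}\|=1$, $\|(H-\lambda)\psi_{j}\|\leq\delta$, and, for contradiction, $\mathrm{dist}(\lambda,\sigma_{\mathrm{ess}}(H))>\delta$. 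Since $\sigma_{\mathrm{ess}}(H)$ is closed, choose $\delta^{*}>\delta$ with $\sigma_{\mathrm{ess}}(H)\cap[\lambda-\delta^{*},\lambda+\delta^{*}]=\emptyset$, so by definition of the essential spectrum the projection $P:=E([\lambda-\delta^{*},\lambda+\delta^{*}])$ is finite rank. Because $P$ is finite rank, $P\psi_{j}\to0$ strongly, hence $\|(I-P)\psi_{j}\|\to1$. Using that $P$ commutes with $H$, Pythagoras gives
\[
\|(H-\lambda)\psi_{j}\|^{2}=\|(H-\lambda)P\psi_{j}\|^{2}+\|(H-\lambda)(I-P)\psi_{j}\|^{2}\geq(\delta^{*})^{2}\|(I-P)\psi_{j}\|^{2},
\]
so $\liminf_{j}\|(H-\lambda)\psi_{j}\|\geq\delta^{*}>\delta$, the desired contradiction.

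The main technical subtlety, and the only point requiring care, is the mismatch between the strict inequality $\mathrm{dist}(\lambda,\sigma(H))<\delta$ and the non-strict inequality $\|(H-\lambda)\psi_{j}\|\leq\delta$; this forces the little buffer $\delta^{*}>\delta$ in the essential-spectrum argument and is handled cleanly using the closedness of $\sigma_{\mathrm{ess}}(H)$. Everything else is standard spectral-theoretic bookkeeping, and I would present it in the appendix together with the proof of the stronger generalized version (Theorem~\ref{Thm.Weyl.bis-4}), where the resolvent-based formulation $|((\Delta+\alpha)^{-\gamma}\omega,(\Delta-\lambda)\omega)|\leq\Psi(i^{-1})\|\omega\|^{2}$ used throughout the paper is derived by the same spectral-calculus machinery.
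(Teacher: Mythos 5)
The paper does not actually prove the Classical Weyl Criterion; it is stated in the Appendix as a known background result (with a reference to the authors' earlier work) and only the Generalized Weyl Criterion, Theorem~\ref{Thm.Weyl.bis-4}, is proved there. So there is no in-paper proof to compare yours against, and the evaluation rests on your proof alone.

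Your argument is essentially correct and uses the standard spectral-theoretic route: the spectral measure $E$, the identity
$\|(H-\lambda)\psi\|^2=\int(t-\lambda)^2\,d\|E(t)\psi\|^2$,
and, for the essential spectrum, the finite-rank projection $P=E([\lambda-\delta^*,\lambda+\delta^*])$ together with the compactness of finite-rank operators to extract $P\psi_j\to 0$ strongly from $\psi_j\rightharpoonup 0$. The case split for $\sigma_{\rm ess}$ (isolated eigenvalue of infinite multiplicity versus cluster point of $\sigma(H)$) is exhaustive given the paper's definition, and in the cluster-point case your claim that $E(I)$ has infinite rank for every open $I\ni\mu$ is correct: if $E(I)$ had finite rank, $H$ restricted to its range would have finite spectrum in $I$, contradicting that $\mu$ is a cluster point. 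The Pythagoras step in the necessity argument for $\sigma_{\rm ess}$ is valid since $(H-\lambda)$ commutes with $P$ and $|t-\lambda|\geq\delta^*$ on the support of $I-P$. This is also the same machinery the paper uses to prove the Generalized Weyl Criterion, so methodologically you are consistent with the paper.

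The one caveat, which you explicitly flag, is that the theorem as stated conflates the strict inequality $\mathrm{dist}(\lambda,\sigma(H))<\delta$ with the non-strict bound $\|(H-\lambda)\psi_j\|\leq\delta$: if $\lambda\pm\delta$ is an isolated eigenvalue of $H$ and $\mathrm{dist}(\lambda,\sigma(H))=\delta$, then a unit eigenvector $\psi$ satisfies $\|(H-\lambda)\psi\|=\delta$, so the ``only if'' direction fails as literally written. Your sufficiency argument really establishes $\mathrm{dist}(\lambda,\sigma(H))\leq\delta$, not $<\delta$. This is a defect of the statement, not of your reasoning, and it is immaterial for the way the criterion is used in the paper (with $\delta\to 0$ along sequences). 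It would be cleaner to state the criterion in the sharp form $\mathrm{dist}(\lambda,\sigma(H))=\inf\{\|(H-\lambda)\psi\|:\psi\in\Dom(H),\ \|\psi\|=1\}$, which your spectral-calculus computation proves directly and from which both directions follow without the borderline issue.
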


We call $\psi_j$ an \emph{approximate} eigenfunction of $\lambda$. In the case that $H$ is the Hodge Laplacian, we call $\psi_j$ an \emph{approximate} eigenform.

Let $f, g$ be two  bounded positive continuous functions on $[0,\infty)$, with $g$ satisfying the following additional property: for any $\lambda\geq 0$,  there exists a positive  constant $c$ such that $g(t)(t-\lambda)\geq c>0$ on the interval $[\lambda+1,\infty)$. We define the following three constants: for a fixed $\lambda\geq 0$ we set
\begin{align}\label{constants}
\begin{split}
 c_0=\max(\sup f(t), \sup g(t)) \, ; \ \ \ \ \qquad  \qquad & \ c_1(\lambda)=\inf_{t\in[0,\lambda]} f(t)\, ;  \\
c_2(\lambda)=\min\left(\inf_{t\in[\lambda,\lambda+1]} g(t), \inf_{t\in[\lambda+1,\infty)} g(t) (t-\lambda)\right); \ \ \ %[0.5em]
 &c_3(\lambda)=\lambda\,\sup_{t\in[0,\lambda]} g(t)\, .
\end{split}
\end{align}

\begin{thm}[The Generalized Weyl Criterion]\label{Thm.Weyl.bis-4}
Let $H$ be defined as above. We fix  $\lambda\geq 0$, and $0<\delta<c_0$. If
\[
{\rm dist}(\lambda,\sigma(H))<\delta/c_0,
\]
then there exists a sequence $\{\psi_j\}_{j \in \Nat} \subset  \Dom(H)$ with $\|\psi_j\|=1 \ \ \forall \, j\in\Nat$, such that
\begin{enumerate}
\item
$
 |(f(H) (H-\lambda)\psi_j, (H-\lambda)\psi_j)|\leq \delta,  \quad {and}
$
\item
$
|(g(H)\psi_j, (H-\lambda)\psi_j)|\leq \delta.
$

\end{enumerate}

Whenever ${\rm dist}(\lambda,\sigma_\mathrm{ess}(H))<\delta/c_0,$  then in addition to the above properties, we have
\begin{enumerate}
\setcounter{enumi}{2}%{3}
\item
$
  \psi_j \to 0, \text{ weakly as } j\to\infty
$
\text{ in } $\mathcal H$.
\end{enumerate}

On the other hand, if properties (1),(2) are satisfied for a sequence of $\{\psi_j\}_{j \in \Nat} \subset \Dom(H)$ with $\|\psi_j\|=1 \ \ \forall \, j\in\Nat$, and $c_1(\lambda),  c_2(\lambda)>0$ , then for some constant $c(\lambda)>0$
\[
{\rm dist}(\lambda,\sigma(H))\leq c(\lambda)\cdot\delta^{1/3}
\]
in the case $\lambda>0$. In the case $\lambda=0$ we can show  $\ {\rm dist}(\lambda,\sigma(H))\leq  \delta /c_2(\lambda).$

If the $\{\psi_j\}_{j \in \Nat}$ also satisfy  (3), then
the above upper bounds also hold  for ${\rm dist}(\lambda,\sigma_\mathrm{ess}(H))$.
\end{thm}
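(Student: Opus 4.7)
My plan splits into the two implications, with the essential-spectrum statement added on. For the forward direction, I would invoke the classical Weyl criterion: if $\text{dist}(\lambda,\sigma(H)) < \delta/c_0$ there is a unit vector $\psi \in \Dom(H)$ with $\|(H-\lambda)\psi\| < \delta/c_0$ (weakly null when $\sigma_\mathrm{ess}(H)$ is involved). Since $f,g$ are bounded by $c_0$, functional calculus gives $\|f(H)\|,\|g(H)\| \le c_0$, so Cauchy--Schwarz yields
\[
|(f(H)(H-\lambda)\psi,(H-\lambda)\psi)| \le c_0\|(H-\lambda)\psi\|^2 \le \delta^2/c_0 \le \delta,
\]
using the hypothesis $\delta < c_0$, and
\[
|(g(H)\psi,(H-\lambda)\psi)| \le c_0\|(H-\lambda)\psi\| \le \delta,
\]
which verifies (1) and (2); (3) follows directly from the weak convergence furnished by the classical criterion for $\sigma_\mathrm{ess}$.

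For the converse, the spectral theorem converts (1) and (2) into integral estimates on the scalar measure $d\mu_j(t) := d(E_H(t)\psi_j,\psi_j)$:
\[
\int_0^\infty f(t)(t-\lambda)^2\,d\mu_j \le \delta, \qquad \Bigl|\int_0^\infty g(t)(t-\lambda)\,d\mu_j\Bigr| \le \delta.
\]
Suppose by contradiction $\text{dist}(\lambda,\sigma(H)) > r$. For $\lambda>0$ and $r < \min(\lambda,1)$, $\mu_j$ is supported on $[0,\lambda-r] \cup [\lambda+r,\infty)$; set $a = \mu_j([0,\lambda-r])$, $b = \mu_j([\lambda+r,\infty))$, so $a+b=1$. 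Using $f \ge c_1(\lambda)$ on $[0,\lambda]$, condition (1) gives $a \le \delta/(c_1(\lambda)\,r^2)$. For (2), on $[0,\lambda-r]$ the integrand has absolute value at most $c_3(\lambda)$, while on $[\lambda+r,\infty)$ it is at least $c_2(\lambda)\,r$ (combining the two subcases $[\lambda+r,\lambda+1]$ and $[\lambda+1,\infty)$ in the definition of $c_2(\lambda)$). Thus $c_2(\lambda)\,r\,b - c_3(\lambda)\,a \le \delta$, so
\[
b \le \frac{c_3(\lambda)\,a}{c_2(\lambda)\,r} + \frac{\delta}{c_2(\lambda)\,r} \le \frac{c_3(\lambda)\,\delta}{c_1(\lambda)c_2(\lambda)\,r^3} + \frac{\delta}{c_2(\lambda)\,r}.
\]
Plugging into $a+b=1$ yields an inequality of shape $1 \le A(\lambda)\delta/r^2 + B(\lambda)\delta/r^3 + C(\lambda)\delta/r$ whose dominant term as $r \to 0$ is the $r^{-3}$ term, so choosing $r = c(\lambda)\,\delta^{1/3}$ with $c(\lambda)$ sufficiently large produces a contradiction. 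In the case $\lambda = 0$ the left interval is empty, only the $b$-argument survives, and one gets the sharper estimate $r \le \delta/c_2(0)$ directly.

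For the essential-spectrum assertion, assume (1), (2), (3) and suppose $\sigma_\mathrm{ess}(H) \cap [\lambda-r,\lambda+r] = \emptyset$. Then $\sigma(H) \cap [\lambda-r,\lambda+r]$ consists of finitely many isolated eigenvalues of finite multiplicity, so the spectral projection $P := E_H([\lambda-r,\lambda+r])$ has finite rank. Weak convergence $\psi_j \rightharpoonup 0$ forces $\|P\psi_j\| \to 0$, and since $(H-\lambda)P$ is bounded, $\|(H-\lambda)P\psi_j\| \to 0$ as well. Replacing $\psi_j$ by the renormalization of $(I-P)\psi_j$ gives a unit sequence whose spectral measure is supported outside $[\lambda-r,\lambda+r]$ and for which (1), (2) persist up to $o(1)$ errors; applying the bulk argument above then delivers the bound. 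The main obstacle is the bookkeeping in the quantitative converse: one must balance the three competing terms arising from $a$ (through (1)) and from the two sub-regions in the split of (2), and show that the governing scale is $\delta/r^3$, which is what forces the exponent $1/3$ in the conclusion.
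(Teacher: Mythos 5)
Your proof follows essentially the same strategy as the paper: the forward direction invokes the classical Weyl criterion plus functional calculus and Cauchy--Schwarz, while the converse works with the spectral measure $d\mu_j=d(E(t)\psi_j,\psi_j)$ split at $\lambda$ into $a=\mu_j([0,\lambda-r])$ and $b=1-a$, which is exactly the paper's decomposition $\psi_j=P\psi_j+(I-P)\psi_j$ with $P=E([0,\lambda-\eps])$, and your inequality $1\le \delta/(c_1 r^2)+c_3\delta/(c_1c_2r^3)+\delta/(c_2r)$ is a rearrangement of the paper's $c_1c_2\eps^3\le\delta(c_1\eps^2+c_2\eps+c_3)$. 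The only divergence is cosmetic: you spell out the finite-rank-projection argument for the essential-spectrum case, which the paper simply asserts; otherwise the arguments coincide.
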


 In this paper we take
$f(t)=(t+\alpha)^{-2}$ and $g(t)=(t+\alpha)^{-1}$ for some positive number $\alpha$.
Observe that for any integer  $\gamma\geq 0$, we have
\begin{equation} \label{cor21e1}
\begin{split}
&((H+\alpha)^{-\gamma} (H-\lambda)\psi_j, (H-\lambda)\psi_j) \\
&=((H+\alpha)^{-\gamma+1}\psi_j, (H-\lambda)\psi_j)-(\alpha+\lambda)\;((H+\alpha)^{-\gamma}\psi_j, (H-\lambda)\psi_j).
\end{split}
\end{equation}
Using the above identity, we obtain the following useful criterion
\begin{corl}\label{cor21} A nonnegative real number  $\lambda$ belongs to the spectrum $\sigma(H)$ if, and only if, there exists a  constant $\alpha>0$ and   $\{\psi_j\}_{j \in \Nat} \subset \Dom(H)$ with $\|\psi_j\|=1 \ \ \forall \, j\in\Nat$, such that
\begin{enumerate}
\item
$
((H+\alpha)^{-\gamma}\psi_j, (H-\lambda)\psi_j)\to 0 \
$
for $\gamma=1,2$.
\end{enumerate}
Moreover, $\lambda$ belongs   $\sigma_\mathrm{ess}(H)$  if, and only if,
in addition to the above properties

\begin{enumerate}
\setcounter{enumi}{1}%{2}
\item $  \psi_j \to 0, \text{ weakly as } j\to\infty$\text{ in } $\mathcal H$.
\end{enumerate}
Furthermore, if for some $0<\delta<1$,
\[
|((H+\alpha)^{-\gamma}\psi_j, (H-\lambda)\psi_j)|\leq\delta
\]
for  $\gamma=1,2$ and  for all $j$, then there exists a constant $c(\lambda,\alpha)>0$, such that
\[
{\rm dist}(\lambda,\sigma(H))<c(\lambda,\alpha)\cdot \delta^{1/3}.
\]
\end{corl}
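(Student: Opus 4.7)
The plan is to derive Corollary \ref{cor21} as a direct specialization of the Generalized Weyl Criterion (Theorem \ref{Thm.Weyl.bis-4}) with the choices $f(t)=(t+\alpha)^{-2}$ and $g(t)=(t+\alpha)^{-1}$, using identity \eqref{cor21e1} as the bridge between the two formulations.

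First I would verify that the pair $(f,g)$ meets the hypotheses of Theorem \ref{Thm.Weyl.bis-4}. Both are bounded, positive, and continuous on $[0,\infty)$, and $g(t)(t-\lambda)=(t-\lambda)/(t+\alpha)$ is bounded below by a positive constant on $[\lambda+1,\infty)$ since it tends to $1$ as $t\to\infty$. A short computation gives the explicit constants from \eqref{constants}:
\[
c_0=\max(\alpha^{-2},\alpha^{-1}), \quad c_1(\lambda)=(\lambda+\alpha)^{-2},\quad c_2(\lambda)=\frac{1}{1+\lambda+\alpha},\quad c_3(\lambda)=\frac{\lambda}{\alpha},
\]
all strictly positive. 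This means the quantitative two-way bounds of Theorem \ref{Thm.Weyl.bis-4} apply.

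Next I would exploit identity \eqref{cor21e1}. For $\gamma=2$ it rearranges condition (1) of Theorem \ref{Thm.Weyl.bis-4}, namely $(f(H)(H-\lambda)\psi_j,(H-\lambda)\psi_j)$, into a linear combination of the two quantities $((H+\alpha)^{-\gamma}\psi_j,(H-\lambda)\psi_j)$ for $\gamma=1,2$. Condition (2) of Theorem \ref{Thm.Weyl.bis-4}, $(g(H)\psi_j,(H-\lambda)\psi_j)$, is exactly the $\gamma=1$ quantity. Hence if both $\gamma=1,2$ quantities go to zero, then conditions (1) and (2) of Theorem \ref{Thm.Weyl.bis-4} are satisfied, and conversely condition (2) recovers $\gamma=1$ while (1) together with (2) recovers $\gamma=2$ via \eqref{cor21e1}. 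This equivalence, combined with the qualitative half of Theorem \ref{Thm.Weyl.bis-4}, settles the ``iff'' characterizations for both $\sigma(H)$ and $\sigma_{\mathrm{ess}}(H)$, the essential spectrum part following because the weak convergence hypothesis (3) is identical in both statements.

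For the quantitative final assertion, I would track constants. If $|((H+\alpha)^{-\gamma}\psi_j,(H-\lambda)\psi_j)|\leq\delta$ for $\gamma=1,2$, then \eqref{cor21e1} yields
\[
\bigl|\bigl(f(H)(H-\lambda)\psi_j,(H-\lambda)\psi_j\bigr)\bigr|\leq \bigl(1+\alpha+\lambda\bigr)\delta,\qquad \bigl|\bigl(g(H)\psi_j,(H-\lambda)\psi_j\bigr)\bigr|\leq \delta,
\]
so conditions (1),(2) of Theorem \ref{Thm.Weyl.bis-4} hold with $\delta$ replaced by $(1+\alpha+\lambda)\delta$. Since $c_1(\lambda),c_2(\lambda)>0$, Theorem \ref{Thm.Weyl.bis-4} gives ${\rm dist}(\lambda,\sigma(H))\leq c(\lambda)\bigl((1+\alpha+\lambda)\delta\bigr)^{1/3}$, which absorbing constants yields the claimed $c(\lambda,\alpha)\,\delta^{1/3}$ bound. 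The main (and really only) delicate point is the bookkeeping of constants in this last step and ensuring that identity \eqref{cor21e1} is used correctly in both directions; there is no substantive obstacle beyond this, as all the analytic content already lives inside Theorem \ref{Thm.Weyl.bis-4}.
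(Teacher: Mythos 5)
Your proposal is correct and takes exactly the route the paper intends: specialize Theorem~\ref{Thm.Weyl.bis-4} to $f(t)=(t+\alpha)^{-2}$, $g(t)=(t+\alpha)^{-1}$, verify the positivity of the constants in \eqref{constants}, and use identity \eqref{cor21e1} to translate between the two formulations of the test conditions; the paper actually leaves these details implicit (``Using the above identity, we obtain the following useful criterion''), and your write-up simply makes them explicit. The only cosmetic omission is that for $\lambda=0$ Theorem~\ref{Thm.Weyl.bis-4} gives the sharper linear bound ${\rm dist}(0,\sigma(H))\leq\delta/c_2(0)$, but since $\delta<1$ this is absorbed into the stated $c(\lambda,\alpha)\,\delta^{1/3}$ bound anyway.
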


\begin{proof}[Proof of the Theorem~\ref{Thm.Weyl.bis-4}] The proof of the first part of the theorem is identical to that in~\cite{char-lu-1}. For the reverse statement   we use the assumptions on $H$ to write  $H=\int_0^\infty t\, dE(t)$ for the spectral measure $E$ of $H$.

When $\lambda>0$, we assume that ${\rm dist}(\lambda,\sigma(H))>\eps$  for some $\eps<\min(\lambda,1)$. Let $P=E([0,\lambda-\eps])$. $P$ is  the orthogonal  projection operator which can be used  to write $\psi_j=\psi_j^1+\psi_j^2,$ where  $\psi_j^1 =P \psi_j,$ and $\psi_j^2=\psi_j-\psi_j^1$.

By the spectral decomposition, given the assumptions on $f$ and $g$ we get
\begin{align*}
&(f(H) (H-\lambda)\psi_j, (H-\lambda)\psi_j) \geq c_1(\lambda)\eps^2\|\psi_j^1\|^2 \ \ \ \text{and} \\
&(g(H)\psi_j, (H-\lambda)\psi_j)\geq c_2(\lambda)\eps\|\psi_j^2\|^2-c_3(\lambda)\|\psi_j^1\|^2.
\end{align*}
If the criteria {\it (1)}, {\it (2)} are satisfied, then, by the two inequalities above
 \begin{align*}
\delta\geq  c_1(\lambda)\eps^2\, x\ \  \text{and} \ \
\delta\geq c_2(\lambda)\eps(1-x)-c_3(\lambda) x,
 \end{align*}
 where $x=\|\psi_j^1\|^2$. Using the first inequality to eliminate $x$ from the second, we have
 \[
 \delta\geq\frac{c_1(\lambda)\, c_2(\lambda)\,\eps^3}{c_1(\lambda)\eps^2+c_2(\lambda)\eps+c_3(\lambda)}\geq  c(\lambda)\,\eps^3
 \]
which proves the upper bound for $\eps$. If {\it (3)} is satisfied, then the estimate holds for  $\sigma_\mathrm{ess}(H)$.

In the case $\lambda=0$ we have  $c_3(\lambda)=x=0$, and we get the estimate $\eps<\delta/c_2(\lambda)$.
\end{proof}

\section{Proof of Theorem \ref{thmSpecGH}}

\begin{proof} We  consider  the two operators that make up the Laplacian $\Delta$ on $k$-forms
\[
\mathcal{L}^1=\delta d, \ \ \mathcal{L}^2=   d \delta.
\]
Each one of the above operators has a self-adjoint Friedrichs extension which is nonnegative and
$\mathfrak{Dom}(k,\Delta)= \mathfrak{Dom}(k,\mathcal{L}^1)\cap  \mathfrak{Dom}(k,\mathcal{L}^2)$. By the Hodge decomposition theorem on complete manifolds  \cites{mazz,ChLu6} for any $0\leq k \leq n$ we get
\begin{equation}\label{346}
\sigma_{\rm ess}  (k,\mathcal{L}^1)\cup \sigma_{\rm ess}   (k,\mathcal{L}^2)\setminus \{0\}\subset\sigma_{\rm ess}  (k,\Delta) \subset \sigma_{\rm ess}    (k,\mathcal{L}^1)\cup \sigma_{\rm ess}   (k,\mathcal{L}^2).
\end{equation}

Let $\Delta_i$ be the Laplacian corresponding to the metric $g_i$ on $k$-forms. Since $g_i$ is the product metric, we have $\sigma(k,\Delta_i)= \sigma_{\rm ess}(k,\Delta_i)=[\alpha(N_i,m,n,k),\infty).$
By Poincar\'e duality, we have $\sigma_{\rm ess}(k,\mathcal{L}^2) =\sigma_{\rm ess}(n -k,\mathcal{L}^1)$. Using this, we can reduce the proof of the theorem to the following version.

We define  $\mathcal{L}^1_M= \delta_M d_M$ and  $\mathcal{L}^1_i= \delta_id_i$, where $d_M,\delta_M$ (resp. $d_i, \delta_i$) are the exterior derivative operator and its adjoint on the manifold $M$ (resp. $N_i\times\R^{m}$).
By~\eqref{346}, to prove the theorem it suffices to show the following inclusion:
\[
\sigma_\mathrm{ess}(k,\mathcal{L}^1_M)\supset \bigcap_{j=1}^\infty\bigcup_{i>j}\sigma_{\rm ess}(k,\mathcal{L}^1_{i}) .
\]

We will use  the generalized Weyl criterion to prove the above containment.
Denote the $L^2$ pairing on $(M,g_M)$ by $( \cdot \, , \cdot)_M$  and on $(N_i\times\R^{m}, g_i)$   by $( \cdot \,, \cdot)_i$, and the respective norms by $\|\cdot\|_M$ and $\|\cdot\|_i$. Let $\lambda>0$ be a positive number and assume that
\[
\lambda \in  \bigcap_{j=1}^\infty\bigcup_{i>j}\sigma_{\rm ess}(k,\mathcal{L}^1_{i}).
\]
Then there is a subsequence  of positive integers $a_j$ such that $\lambda\in \sigma_{\rm ess}(k,\mathcal{L}^1_{a_j})$ for all $j$.
For simplicity we rename $a_j=j$. By Corollary \ref{cor21},  there exists a sequence of compactly supported $k$-forms $\{\psi_j\}_{j\in\mathbb{N}}$ with $\|\psi_j\|_j=1$ such that for $\gamma=1,2$
\begin{equation} \label{thm7e2}
\bigl|\,(\,(\mathcal{L}^1_{j}+1)^{-\gamma}\psi_j,(\mathcal{L}^1_{j}-\lambda) \psi_j)_{j} \,\bigr|\to 0.
\end{equation}

Let $p_j=f_j(x_j)$. Since  the $N_j$  have a uniformly bounded diameter, we may assume, without loss of generality,  that there exists a sequence of numbers $R_j\to\infty$ such that  the support of $\psi_{j}$ lies in $B_{p_j}(R_{j})\subset {N_j\times\R^{m}}$ and the support of  $\w_{j}=(f_{j})^*(\psi_{j})$ lies in $ B_{x_{j}}(R_{j})$.

Let $Q^1_M$ and $Q^1_{j}$ denote the associated quadratic forms of $\mathcal{L}^1_M$ and $\mathcal{L}^1_j$ respectively.  Then $Q^1_M (\omega)=(d_M \omega,d_M \omega)_M$ and $Q^1_{j}(\omega)=(d_j \omega,d_j \omega)_j$. For ${j}$ large enough
\begin{equation}\label{thm7e3}
\begin{split}
\bigl| Q^1_M(\omega_j)- Q^1_{j}(\psi_{j}) \bigr|  = \bigl| \|(f_{j})^*(d_{j}\psi_{j})\|_M^2 -  \|d_{j}\psi_{j}\|_{j}^2 \bigr|  \leq \Psi(j^{-1}) \|d_{j}\psi_{j}\|_{j}^2
\end{split}
\end{equation}
by assumption \eref{thm7e1} and the fact that the exterior derivative is independent of the metric.

Moreover, using a perturbation argument as in \cite{ChLu6} one can show that under assumption \eref{thm7e3} the resolvent operators also satisfy an $\eps$-approximation estimate of the type
\begin{equation*}
\bigl| \bigl(\,(\mathcal{L}_M^1+1)^{-\gamma} \omega_j, \omega_j\bigr)_M  - \bigl(\,(\mathcal{L}^1_j+1)^{-\gamma} \psi_{j}, \psi_{j}\bigr)_{j}  \bigr| \leq \, \Psi(j^{-1}) \, \|\psi_{j}\|_{j}^2
\end{equation*}
for $\gamma=1, 2$. Since,
\[
\bigl(\, (\mathcal{L}_M^1+1)^{-\gamma} \omega_j,(\mathcal{L}_M^1-\lambda)\omega_j\bigr)_M =\bigl(\,(\mathcal{L}_M^1+1)^{-\gamma+1} \omega_j, \omega_j\bigr)_M-(\lambda+1)\bigl(\,(\mathcal{L}_M^1+1)^{-\gamma} \omega_j,\omega_j\bigr)_M
\]
and a similar identity is true for $\mathcal L_j^1$,  we get
\begin{equation*}
\bigl| \bigl(\, (\mathcal{L}_M^1+1)^{-\gamma} \omega_j,(\mathcal{L}_M^1-\lambda)\omega_j\,\bigr)_M - \bigl(\,(\mathcal{L}^1_{j}+1)^{-\gamma} \psi_{j},(\mathcal{L}^1_{j}-\lambda) \psi_{j}\,\bigr)_{j} \bigr| \leq\Psi(j^{-1}) \|\omega_j\|_M^2
\end{equation*}
for $\gamma=1,2$. By Corollary~\ref{cor21}, we have $\lambda\in\sigma_\mathrm{ess}(k,\mathcal{L}^1_M)$.  Moreover, since the metrics of $B_{x_i}(R_i)$ and $N_i\times\R^m$ are very close, we have $\alpha_k={\displaystyle \liminf_{i\to\infty}}\, \lambda_o(k, B_{x_i}(R_i))$.

If $S$ is a minimal sequence, then $\alpha_k=\lambda_o^{\rm ess}$. Thus~\eqref{279} holds.  This completes the proof of the theorem.
\end{proof}

\begin{bibdiv}
\begin{biblist}

\bib{Ant}{article}{
   author={Antoci, Francesca},
   title={On the spectrum of the Laplace-Beltrami operator for $p$-forms for
   a class of warped product metrics},
   journal={Adv. Math.},
   volume={188},
   date={2004},
   number={2},
   pages={247--293},

}

\bib{BKN}{article}{
   author={Bando, Shigetoshi},
   author={Kasue, Atsushi},
   author={Nakajima, Hiraku},
   title={On a construction of coordinates at infinity on manifolds with
   fast curvature decay and maximal volume growth},
   journal={Invent. Math.},
   volume={97},
   date={1989},
   number={2},
   pages={313--349},
}

\bib{Bus}{article}{
   author={Buser, Peter},
   title={A note on the isoperimetric constant},
   journal={Ann. Sci. \'{E}cole Norm. Sup. (4)},
   volume={15},
   date={1982},
   number={2},
   pages={213--230},
   issn={0012-9593},
   review={\MR{683635}},
}

\bib{CharJFA}{article}{
   author={Charalambous, Nelia},
   title={On the $L^p$ independence of the spectrum of the Hodge
   Laplacian on non-compact manifolds},
   journal={J. Funct. Anal.},
   volume={224},
   date={2005},
   number={1},
   pages={22--48},
}

\bib{CLL}{article}{
   author={Charalambous, Nelia},
   author={Leal, Helton},
   author={Lu, Zhiqin},
   title={Spectral gaps on complete Riemannian manifolds},
   conference={
      title={Geometry of submanifolds},
   },
   book={
      series={Contemp. Math.},
      volume={756},
      publisher={Amer. Math. Soc., Providence, RI},
   },
   date={[2020] \copyright 2020},
   pages={57--67},
   review={\MR{4186938}},
   doi={10.1090/conm/756/15196},
}

\bib{char-lu-1}{article}{
   author={Charalambous, Nelia},
   author={Lu, Zhiqin},
   title={On the spectrum of the Laplacian},
   journal={Math. Ann.},
   volume={59},
   date={2014},
   number={1-2},
   pages={211--238},
}

\bib{ChLu5}{article}{
   author={Charalambous, Nelia},
   author={Lu, Zhiqin},
   title={The spectrum of the Laplacian on forms over flat manifolds},
   journal={Math. Z.},
   volume={296},
   date={2020},
   number={1-2},
   pages={1--12},
   issn={0025-5874},
   review={\MR{4140728}},
   doi={10.1007/s00209-019-02407-5},
}

\bib{ChLu6}{article}{
   author={Charalambous, Nelia},
   author={Lu, Zhiqin},
   title={The spectrum of continuously perturbed operators and the Laplacian
   on forms},
   journal={Differential Geom. Appl.},
   volume={65},
   date={2019},
   pages={227--240},
   issn={0926-2245},
}

\bib{CCoI}{article}{
   author={Cheeger, Jeff},
   author={Colding, Tobias H.},
   title={On the structure of spaces with Ricci curvature bounded below. I},
   journal={J. Differential Geom.},
   volume={46},
   date={1997},
   number={3},
   pages={406--480},
   issn={0022-040X},
   review={\MR{1484888}},
}

\bib{CCoII}{article}{
   author={Cheeger, Jeff},
   author={Colding, Tobias H.},
   title={On the structure of spaces with Ricci curvature bounded below. II},
   journal={J. Differential Geom.},
   volume={54},
   date={2000},
   number={1},
   pages={13--35},
}

\bib{CCoIII}{article}{
   author={Cheeger, Jeff},
   author={Colding, Tobias H.},
   title={On the structure of spaces with Ricci curvature bounded below.
   III},
   journal={J. Differential Geom.},
   volume={54},
   date={2000},
   number={1},
   pages={37--74},
}

\bib{CE}{book}{
   author={Cheeger, Jeff},
   author={Ebin, David G.},
   title={Comparison theorems in Riemannian geometry},
   note={Revised reprint of the 1975 original},
   publisher={AMS Chelsea Publishing, Providence, RI},
   date={2008},
   pages={x+168},
}

\bib{CFG}{article}{
   author={Cheeger, Jeff},
   author={Fukaya, Kenji},
   author={Gromov, Mikhael},
   title={Nilpotent structures and invariant metrics on collapsed manifolds},
   journal={J. Amer. Math. Soc.},
   volume={5},
   date={1992},
   number={2},
   pages={327--372},
}

\bib{CLY}{article}{
   author={Cheng, Siu Yuen},
   author={Li, Peter},
   author={Yau, Shing Tung},
   title={On the upper estimate of the heat kernel of a complete Riemannian
   manifold},
   journal={Amer. J. Math.},
   volume={103},
   date={1981},
   number={5},
   pages={1021--1063},
}

\bib{con}{article}{
   author={Conner, P. E.},
   title={The Neumann's problem for differential forms on Riemannian
   manifolds},
   journal={Mem. Amer. Math. Soc.},
   volume={20},
   date={1956},
   pages={56},
   issn={0065-9266},
   review={\MR{78467}},
}

\bib{dod}{article}{
   author={Dodziuk, Jozef},
   title={Eigenvalues of the Laplacian on forms},
   journal={Proc. Amer. Math. Soc.},
   volume={85},
   date={1982},
   number={3},
   pages={437--443},
}

\bib{Don81}{article}{
   author={Donnelly, Harold},
   title={On the essential spectrum of a complete Riemannian manifold},
   journal={Topology},
   volume={20},
   date={1981},
   number={1},
   pages={1--14},
}

\bib{Don}{article}{
   author={Donnelly, Harold},
   title={Spectrum of the Laplacian on asymptotically Euclidean spaces},
   journal={Michigan Math. J.},
   volume={46},
   date={1999},
   number={1},
   pages={101--111},
}

\bib{Don2}{article}{
   author={Donnelly, Harold},
   title={The differential form spectrum of hyperbolic space},
   journal={Manuscripta Math.},
   volume={33},
   date={1980/81},
   number={3-4},
   pages={365--385},
}

\bib{EF93}{article}{
   author={Escobar, Jos{\'e} F.},
   author={Freire, Alexandre},
   title={The differential form spectrum of manifolds of positive curvature},
   journal={Duke Math. J.},
   volume={69},
   date={1993},
   number={1},
   pages={1--41},
}

\bib{Fuk1}{article}{
   author={Fukaya, Kenji},
   title={Hausdorff convergence of Riemannian manifolds and its
   applications},
   conference={
      title={Recent topics in differential and analytic geometry},
   },
   book={
      series={Adv. Stud. Pure Math.},
      volume={18},
      publisher={Academic Press, Boston, MA},
   },
   date={1990},
   pages={143--238},
}

\bib{Fuk2}{article}{
   author={Fukaya, Kenji},
   title={Collapsing of Riemannian manifolds and eigenvalues of Laplace
   operator},
   journal={Invent. Math.},
   volume={87},
   date={1987},
   number={3},
   pages={517--547},
   issn={0020-9910},
   review={\MR{874035}},
   doi={10.1007/BF01389241},
}

\bib{Gr1}{book}{
   author={Gromov, Misha},
   title={Metric structures for Riemannian and non-Riemannian spaces},
   series={Modern Birkh\"auser Classics},
   edition={Reprint of the 2001 English edition},
   note={Based on the 1981 French original;
   With appendices by M. Katz, P. Pansu and S. Semmes;
   Translated from the French by Sean Michael Bates},
   publisher={Birkh\"auser Boston, Inc., Boston, MA},
   date={2007},
   pages={xx+585},
}

\bib{GuM}{article}{
   author={Guillarmou, Colin},
   author={Mazzeo, Rafe},
   title={Resolvent of the Laplacian on geometrically finite hyperbolic
   manifolds},
   journal={Invent. Math.},
   volume={187},
   date={2012},
   number={1},
   pages={99--144},
   issn={0020-9910},
}

\bib{Ho}{article}{
   author={Honda, Shouhei},
   title={Spectral convergence under bounded Ricci curvature},
   journal={J. Funct. Anal.},
   volume={273},
   date={2017},
   number={5},
   pages={1577--1662},
   issn={0022-1236},
}

	\bib{KS}{article}{
   author={Kowalski, Old\v{r}ich},
   author={Sekizawa, Masami},
   title={On the geometry of orthonormal frame bundles},
   journal={Math. Nachr.},
   volume={281},
   date={2008},
   number={12},
   pages={1799--1809},
}

\bib{KrLu}{article}{
   author={Krej\v ci\v r\'\i k, David},
   author={Lu, Zhiqin},
   title={Location of the essential spectrum in curved quantum layers},
   journal={J. Math. Phys.},
   volume={55},
   date={2014},
   number={8},
   pages={083520, 13},
   issn={0022-2488},
}

\bib{LL-2}{unpublished}{
   author={Leal, Helton},
   author={Lu, Zhiqin},
   title={Spectral Gaps on Laplacian on Differential Forms},
   year={2021},
   }

\bib{Post2}{article}{
   author={Lled{\'o}, Fernando},
   author={Post, Olaf},
   title={Existence of spectral gaps, covering manifolds and residually
   finite groups},
   journal={Rev. Math. Phys.},
   volume={20},
   date={2008},
   number={2},
   pages={199--231},
   issn={0129-055X},
}

\bib{lott}{article}{
   author={Lott, John},
   title={Collapsing and the differential form Laplacian: the case of a
   smooth limit space},
   journal={Duke Math. J.},
   volume={114},
   date={2002},
   number={2},
   pages={267--306},
}

\bib{lott1}{unpublished}{
  author={Lott, John},
  title={Collapsing and the differential form Laplacian: the case of a
   singular limit space},
  note={Preprint, https://math.berkeley.edu/~lott/sing.pdf},
  }

\bib{Lott2}{article}{
   author={Lott, John},
   title={On the spectrum of a finite-volume negatively-curved manifold},
   journal={Amer. J. Math.},
   volume={123},
   date={2001},
   number={2},
   pages={185--205},
}

\bib{LoSh}{article}{
   author={Lott, John},
   author={Shen, Zhongmin},
   title={Manifolds with quadratic curvature decay and slow volume growth},
   language={English, with English and French summaries},
   journal={Ann. Sci. \'{E}cole Norm. Sup. (4)},
   volume={33},
   date={2000},
   number={2},
   pages={275--290},
}

\bib{Lu-Zhou_2011}{article}{
   author={Lu, Zhiqin},
   author={Zhou, Detang},
   title={On the essential spectrum of complete non-compact manifolds},
   journal={J. Funct. Anal.},
   volume={260},
   date={2011},
   number={11},
   pages={3283--3298},
}

\bib{mazz}{article}{
   author={Mazzeo, Rafe},
   author={Phillips, Ralph S.},
   title={Hodge theory on hyperbolic manifolds},
   journal={Duke Math. J.},
   volume={60},
   date={1990},
   number={2},
   pages={509--559},
   issn={0012-7094},
}

\bib{Oneill}{article}{
   author={O'Neill, Barrett},
   title={The fundamental equations of a submersion},
   journal={Michigan Math. J.},
   volume={13},
   date={1966},
   pages={459--469},
}

\bib{Pet}{book}{
   author={Petersen, Peter},
   title={Riemannian geometry},
   series={Graduate Texts in Mathematics},
   volume={171},
   publisher={Springer-Verlag, New York},
   date={1998},
   pages={xvi+432},
   isbn={0-387-98212-4},
   review={\MR{1480173}},
   doi={10.1007/978-1-4757-6434-5},
}

\bib{Post1}{article}{
   author={Post, Olaf},
   title={Periodic manifolds with spectral gaps},
   journal={J. Differential Equations},
   volume={187},
   date={2003},
   number={1},
   pages={23--45},

}

\bib{ruh}{article}{
   author={Ruh, Ernst A.},
   title={Almost flat manifolds},
   journal={J. Differential Geom.},
   volume={17},
   date={1982},
   number={1},
   pages={1--14},
   issn={0022-040X},
   review={\MR{658470}},
}

\bib{SchoTr}{article}{
   author={Schoen, Richard},
   author={Tran, Hung},
   title={Complete manifolds with bounded curvature and spectral gaps},
   journal={J. Differential Equations},
   volume={261},
   date={2016},
   number={4},
   pages={2584--2606},
}

\bib{sturm}{article}{
   author={Sturm, Karl-Theodor},
   title={On the $L^p$-spectrum of uniformly elliptic operators on
   Riemannian manifolds},
   journal={J. Funct. Anal.},
   volume={118},
   date={1993},
   number={2},
   pages={442--453},
   issn={0022-1236},
}

	\bib{wang}{article}{
   author={Wang, Jiaping},
   title={The spectrum of the Laplacian on a manifold of nonnegative Ricci
   curvature},
   journal={Math. Res. Lett.},
   volume={4},
   date={1997},
   number={4},
   pages={473--479},
   issn={1073-2780},
   review={\MR{1470419}},
   doi={10.4310/MRL.1997.v4.n4.a4},
}

\end{biblist}
\end{bibdiv}
% \listofchanges[style=summary]

\end{document}